\documentclass[preprint,12pt]{elsarticle}

\usepackage{amssymb,amsmath,amsthm}
\usepackage{latexsym}
\usepackage{empheq}
\usepackage{enumerate,verbatim}
\usepackage[title]{appendix}
\usepackage{lineno,hyperref,color,bm}
\usepackage{booktabs}

\usepackage{graphicx}
\usepackage{subcaption}
\usepackage{caption}
\DeclareCaptionSubType{figure}

\usepackage[ruled,vlined,linesnumbered]{algorithm2e}

\SetKwInput{KwIn}{Input}
\SetKwInput{KwOut}{Output}
\SetKw{Return}{return}
\usepackage{geometry}
\newtheorem{theorem}{Theorem}[section]
\newtheorem{lemma}[theorem]{Lemma}
\newtheorem{remark}[theorem]{Remark}
\newtheorem{corollary}[theorem]{Corollary}
\newtheorem{proposition}[theorem]{Proposition}
\newtheorem{definition}[theorem]{Definition}

\graphicspath{{fig/}}
\makeatletter
\def\ps@pprintTitle{%
	\let\@oddhead\@empty
	\let\@evenhead\@empty
	\let\@oddfoot\@empty
	\let\@evenfoot\@empty
}
\makeatother

\providecommand{\argmax}{\operatorname*{arg\,max}}

\begin{document}

\begin{frontmatter}

\title{Sparse Recovery via $\ell_1^2-\eta\ell_2^2$ Minimization\tnoteref{fundingnote}}
\tnotetext[fundingnote]{This work was funded by the National Natural Science Foundation of China (12171339, 12471296).}

\author[label1]{Lang Yu}
\ead{langyu9602@gmail.com}
\author[label1]{Nanjing Huang\corref{mycorrespondingauthor}}
\ead{nanjinghuang@hotmail.com}

\cortext[mycorrespondingauthor]{Corresponding author}
\address[label1]{School of Mathematics, Sichuan University, 610064, Chengdu, P.R. China}

\begin{abstract}
The weighted difference of squared norms (WDSN) penalty $\ell_1^2-\eta\ell_2^2$ with $0\leq \eta\leq 1$ has attracted considerable attention due to its strong sparsity-promoting ability and favorable reconstruction performance in compressed sensing and inverse problems. However, exact recovery guarantees and restricted isometry property (RIP) analysis for WDSN minimization have not yet been established. In this paper, we address this gap. First, we establish sufficient conditions for the exact recovery of $k$-sparse signals based on the null space property (NSP). Then, under the $\delta_{2k}$-RIP condition, we derive stable recovery guarantees for both $k$-sparse signals and general signals, and characterize upper bounds on the reconstruction error. Furthermore, we propose a WDSN-based regularized model to handle both noiseless and noisy observations in a unified framework. To design an efficient algorithm, we derive an explicit formula for the proximal operator of the WDSN functional. Based on this proximal solver, we develop a suitable variable-splitting scheme within the alternating direction method of multipliers (ADMM) and establish its global convergence under some mild conditions. Finally, numerical experiments show that the proposed method outperforms the iterative half variation method in both noiseless and noisy sparse recovery tasks.
\end{abstract}

\begin{keyword}
compressed sensing \sep sparse signal recovery \sep $\ell_1^2-\eta\ell_2^2$ nonconvex minimization \sep restricted isometry property \sep proximal operator \sep nonconvex ADMM
\end{keyword}

\end{frontmatter}

%% main text
\section{Introduction}

Compressed sensing and sparse inverse problems are concerned with the recovery of sparse or compressible signals from highly undersampled linear measurements. A standard observation model is
$$
	\bm{b}=\bm{A}\bm{x}+\bm{e},
$$
where $\bm{A}\in\mathbb{R}^{m\times n}$ with $m\ll n$, $\bm{x}\in\mathbb{R}^n$ is the unknown signal, and $\bm{e}$ denotes measurement noise. The ideal recovery model seeks the sparsest feasible solution through $\ell_0$-minimization. This formulation, however, is combinatorial and generally NP-hard \cite{Natarajan1995SparseApproximate}. A classical alternative is to replace the $\ell_0$ measure by the convex $\ell_1$ norm, leading to basis pursuit and its noise-aware variants. The success of $\ell_1$-minimization has been justified by the null space property (NSP), the restricted isometry property (RIP), and related recovery conditions \cite{Donoho2006Compressed,CandesTao2005Decoding,CandesRombergTao2006Stable,Candes2008RIP,FoucartRauhut2013CompressedSensing}. In particular, RIP-based analysis provides a standard framework for deriving exact recovery conditions, stability estimates, and explicit reconstruction error bounds.

Although $\ell_1$-minimization is computationally tractable and theoretically well understood, the $\ell_1$ norm is still a convex surrogate of the $\ell_0$ measure and may introduce bias in sparse estimation. This limitation has motivated a broad class of nonconvex sparsity-promoting models, including $\ell_p$ minimization with $0<p<1$, the difference-of-norms penalty $\ell_1-\ell_2$, ratio-type penalties such as $\ell_1/\ell_2$, and more general $\ell_p-\alpha\ell_q$ or $\ell_p/\ell_q$ formulations \cite{Chartrand2007Exact,ChartrandStaneva2008RIPNonconvex,ZhangLi2019OptimalRIP,YinLouHeXin2015L1L2,XuNarayanTranWebster2021Ratio,ZhouYu2021QRatio,GeXieChen2024UniformRIP}. These nonconvex models are designed to promote sparsity more strongly than the $\ell_1$ norm and have shown advantages in sparse recovery, especially for coherent sensing systems. At the same time, their nonconvexity and nonseparability make the analysis of exact recovery, stability, and algorithmic convergence considerably more delicate.

Among nonconvex nonseparable penalties, the $\ell_1-\ell_2$ model has attracted particular attention. Yin et al. \cite{YinLouHeXin2015L1L2} studied $\ell_1-\ell_2$ minimization for compressed sensing and demonstrated its effectiveness in sparse recovery. Lou and Yan \cite{LouYan2018Fast} derived an explicit proximal operator for the $\ell_1-\ell_2$ metric and incorporated it into forward--backward splitting and the alternating direction method of multipliers (ADMM), leading to efficient numerical algorithms. More recently, Ge, Chen, and Ng \cite{GeChenNg2021NewRIP} developed refined RIP analysis for $\ell_1-\ell_2$ minimization and established improved exact and stable recovery guarantees. These studies show that NSP/RIP analysis is essential for understanding the recovery capability of nonconvex sparse penalties and for explaining their advantages over classical convex relaxation.

Despite these developments, the $\ell_1-\eta\ell_2$ penalty still has an important analytical limitation. The term $\|\bm{x}\|_2$ is not differentiable at the origin, and iterative schemes involving $\ell_1-\eta\ell_2$ may produce singular expressions of the form $\bm{x}/\|\bm{x}\|_2$. This difficulty complicates convergence analysis and often requires additional assumptions to avoid zero or near-zero iterates \cite{DingHan2019AlphaL1BetaL2,LiDing2026WDSNNonlinear}. A natural way to remove this singularity while retaining a difference-of-norms structure is to replace the $\ell_2$ term by its squared counterpart. To this end, Li and Ding \cite{LiDing2025WDSNLinear} introduced the weighted difference-of-squared-norms (WDSN) sparsity functional
$$
	\mathcal{R}_{\eta}(\bm{x}) := \|\bm{x}\|_1^2-\eta\|\bm{x}\|_2^2, \qquad 0\leq \eta\leq 1
$$
and established existence, stability, sparsity of regularized solutions for linear ill-posed problems. They further extended their results to nonlinear ill-posed inverse problems in \cite{LiDing2026WDSNNonlinear}. 

It is worth mentioning that $\mathcal{R}_{\eta}$ remains nonsmooth because of the squared $\ell_1$ term, the use of $\|\bm{x}\|_2^2$ avoids the singularity associated with $\|\bm{x}\|_2$ at the origin. Moreover, as $\eta$ increases, the negative squared $\ell_2$ term enhances the nonconvex sparsity-promoting effect. Another useful feature is the two-homogeneity of $\mathcal{R}_{\eta}$, which matches the homogeneity of the quadratic data-fidelity term. This suggests that, in a regularized least-squares model, the relative balance between data fidelity and WDSN regularization is naturally preserved under amplitude rescaling of the measurements. Thus, WDSN provides a useful compromise between sparse promotion and analytical tractability.

It should be noticed that the work of \cite{LiDing2026WDSNNonlinear, LiDing2025WDSNLinear} are mainly developed within Tikhonov-type regularization frameworks. However, to our best knowledge, the compressed sensing question of whether WDSN minimization admits exact recovery guarantees and RIP-based stable recovery bounds has not yet been systematically studied. Motivated by this gap, this paper studies two constrained sparse recovery models induced by the WDSN functional. Specifically, we consider the following WDSN minimization
\begin{subequations}\label{eq:WDSN_models}
	\begin{empheq}[left=\empheqlbrace]{align}
		\mathcal{P}_{\eta}: \quad &\min_{\bm{x}\in\mathbb{R}^n}  \mathcal{R}_{\eta}(\bm{x}) \quad \mbox{ s.t. }\bm{A}\bm{x}=\bm{b}, \label{eq:WDSN_exact} \\
		\mathcal{P}_{\eta}^{\varepsilon}: \quad &\min_{\bm{x}\in\mathbb{R}^n}  \mathcal{R}_{\eta}(\bm{x}) \quad \mbox{ s.t. } \|\bm{A}\bm{x}-\bm{b}\|_2\le \varepsilon, \label{eq:WDSN_stable}
	\end{empheq}
\end{subequations}
where the equality-constrained model $\mathcal{P}_{\eta}$ corresponds to noiseless measurements to be used for exact recovery analysis, and the inequality-constrained model $\mathcal{P}_{\eta}^{\varepsilon}$ corresponds to noise measurements to be used for stable recovery analysis.

This paper addresses the theoretical and computational issues for $\mathcal{P}_{\eta}$ and $\mathcal{P}_{\eta}^{\varepsilon}$. It is worth noting  that we are faced with several challenges in this study. On the theoretical side, the WDSN functional is nonconvex and nonseparable, and the negative quadratic term $-\eta\|\bm{x}\|_2^2$ destroys the standard cone structure used in classical $\ell_1$ recovery analysis, which leads to the fact that the usual NSP and RIP techniques cannot be applied directly. Thus, one has to establish new estimates for the recovery error that incorporate the $\eta$-dependent term and still yield explicit $\delta_{2k}$-based recovery bounds. On the computational side, the squared $\ell_1$ term couples all coordinates and WDSN is nonconvex in general, which results in the complexity of computation. Therefore, it is necessary to   design an efficient solver. 

The main contributions of this paper are fourfold. First, we provide an NSP-type sufficient condition for exact recovery via the equality-constrained WDSN (EC-WDSN) model $\mathcal{P}_{\eta}$, thereby giving a theoretical characterization of its sparse recovery capability. Second, we develop a $\delta_{2k}$-RIP-based stable recovery theory for the noise-constrained WDSN (NC-WDSN) model $\mathcal{P}_{\eta}^{\varepsilon}$, covering both $k$-sparse and general signals and yielding explicit reconstruction error estimates. Third, we formulate a WDSN regularization model and derive the closed-form proximal operator of the WDSN functional, which provides the basis for efficient numerical implementation. Fourth, we propose a variable-splitting ADMM algorithm tailored to WDSN minimization and prove its global convergence under some mild assumptions.

The remainder of this paper is organized as follows. Section~\ref{sec-exact} establishes an exact recovery guarantee through a NSP adapted to the WDSN functional. Sections~\ref{sec:stable-recovery-delta-2k} and~\ref{sec:stable-recovery-general-delta-2k} develop RIP-based stable recovery results for sparse and general signals, respectively. Section~\ref{sec:prox-l1sq-l2sq} derives the closed-form proximal mapping of $\mathcal{R}_{\eta}$. Sections~\ref{sec:admm-prox} and~\ref{sec:admm-convergence} present the ADMM algorithm and its convergence analysis. Section~\ref{sec-num} reports the numerical experiments, and Section~\ref{conclusion} concludes the paper.

\section{Exact Recovery via Null Space Property}
\label{sec-exact}

In this section, we establish an exact recovery guarantee for the EC-WDSN minimization problem $\mathcal{P}_{\eta}$. Unlike the RIP-based analysis, the proof in this section is based directly on the geometry of the null space of the sensing matrix $A$. The null space property (NSP) is more intrinsic than the restricted isometry property (RIP) in the sense that it characterizes exact recovery through the interaction between the null space of $A$ and sparse supports.

For the WDSN penalty $\mathcal{R}_{\eta}$, which is generally nonconvex when $\eta>0$, the following $\eta$-augmented NSP is a sufficient null-space condition adapted to the additional negative quadratic term in $\mathcal R_\eta$.

\begin{definition}[$\eta$-Augmented NSP]\label{def:ansp}
	Let $k$ be a positive integer and $0\le \eta\le 1$. A matrix $\bm{A}\in\mathbb{R}^{m\times n}$ is said to satisfy the $\eta$-augmented NSP of order $k$, abbreviated as the $(\eta,k)$-ANSP, if for every nonzero vector $\bm{v}\in \mathcal{N}(\bm{A})\setminus\{0\}$, and every index set $\mathcal{S}\subset\{1,\ldots,n\}$ with $|\mathcal{S}|\le k$, one has
	\begin{equation}
		\label{eq:ansp-def}
		\|\bm{v}_{\mathcal{S}}\|_1+\eta\|\bm{v}\|_2
		<
		\|\bm{v}_{\mathcal{S}^c}\|_1. 
	\end{equation}
\end{definition}

\begin{remark}
	\label{rem:ansp-classical}
	When $\eta=0$, the $(\eta,k)$-ANSP states that, for every $\bm{v}\in\mathcal{N}(\bm{A})\setminus\{0\}$ and every index set $\mathcal{S}$ with $|\mathcal{S}|\le k$, one has $\|\bm{v}_{\mathcal{S}}\|_1 < \|\bm{v}_{\mathcal{S}^c}\|_1$. This is exactly the classical NSP for $\ell_1$ minimization.
\end{remark}

\begin{lemma}[]\label{lem:objective-induced-cone}
	Let $\eta \in [0,1]$ and define $\mathcal{R}_{\eta}(\bm{x}):=\|\bm{x}\|_1^2-\eta\|\bm{x}\|_2^2$. 
	Let $\mathcal{C}\subseteq\mathbb{R}^n$ be a feasible set, and let
	$\bm{\hat{x}}\in\operatorname*{arg\,min}_{\bm{x}\in\mathcal{C}}\mathcal{R}_{\eta}(\bm{x})$. 
	Suppose that $\bm{x}^*\in\mathcal{C}$, set $\bm{h}:=\bm{\hat{x}}-\bm{x}^*$, and let
	$\mathcal{S}\subseteq\{1,\ldots,n\}$ satisfy $\operatorname{supp}(\bm{x}^*)\subseteq \mathcal{S}$. 
	Then
	\begin{equation}
		\label{eq:objective-induced-cone}
		\|\bm{h}_{\mathcal{S}^c}\|_1
		\le
		\|\bm{h}_{\mathcal{S}}\|_1+\eta\|\bm{h}\|_2 .
	\end{equation}
\end{lemma}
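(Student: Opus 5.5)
The idea is to use only the optimality inequality $\mathcal{R}_{\eta}(\bm{\hat x})\le\mathcal{R}_{\eta}(\bm{x}^*)$, which holds because $\bm{x}^*\in\mathcal{C}$. We turn it into a comparison of $\ell_1$ norms, $\|\bm{\hat x}\|_1-\|\bm{x}^*\|_1\le\eta\|\bm{h}\|_2$. We then combine this with the usual lower bound on $\|\bm{x}^*+\bm{h}\|_1$ that comes from $\operatorname{supp}(\bm{x}^*)\subseteq\mathcal{S}$.

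Write $a:=\|\bm{\hat x}\|_1$ and $b:=\|\bm{x}^*\|_1$. Optimality gives
\begin{equation*}
a^2-b^2\le \eta\bigl(\|\bm{\hat x}\|_2^2-\|\bm{x}^*\|_2^2\bigr)
=\eta\bigl(\|\bm{\hat x}\|_2-\|\bm{x}^*\|_2\bigr)\bigl(\|\bm{\hat x}\|_2+\|\bm{x}^*\|_2\bigr).
\end{equation*}
The key step is to bound the right-hand side by $\eta\|\bm{h}\|_2(a+b)$, and we split into two cases.

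\emph{Case $\|\bm{\hat x}\|_2\ge\|\bm{x}^*\|_2$.} The reverse triangle inequality gives $0\le\|\bm{\hat x}\|_2-\|\bm{x}^*\|_2\le\|\bm{h}\|_2$. The inequality $\|\cdot\|_2\le\|\cdot\|_1$ gives $\|\bm{\hat x}\|_2+\|\bm{x}^*\|_2\le a+b$. Multiplying, the right-hand side is at most $\eta\|\bm{h}\|_2(a+b)$.

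\emph{Case $\|\bm{\hat x}\|_2<\|\bm{x}^*\|_2$.} The right-hand side is nonpositive, since $\eta\ge0$. It is therefore trivially at most $\eta\|\bm{h}\|_2(a+b)$.

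In both cases we obtain $(a-b)(a+b)\le\eta\|\bm{h}\|_2(a+b)$. If $a+b=0$, then $\bm{\hat x}=\bm{x}^*=0$, so $\bm{h}=0$ and \eqref{eq:objective-induced-cone} holds trivially. Otherwise we divide by $a+b>0$ to get $a-b\le\eta\|\bm{h}\|_2$.

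For the lower bound, recall that $\bm{x}^*$ vanishes on $\mathcal{S}^c$. Hence
\begin{equation*}
a=\|\bm{x}^*_{\mathcal{S}}+\bm{h}_{\mathcal{S}}\|_1+\|\bm{h}_{\mathcal{S}^c}\|_1\ge b-\|\bm{h}_{\mathcal{S}}\|_1+\|\bm{h}_{\mathcal{S}^c}\|_1 .
\end{equation*}
Combining this with $a-b\le\eta\|\bm{h}\|_2$ gives \eqref{eq:objective-induced-cone}.

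The only delicate point is the passage from squared norms to unsquared ones. Dividing by $a+b$ requires treating the degenerate case $a+b=0$ separately. The case split on the sign of $\|\bm{\hat x}\|_2-\|\bm{x}^*\|_2$ is needed because the $\eta$-term can have either sign. The comparison $\|\cdot\|_2\le\|\cdot\|_1$ is what lets the factor $a+b$ cancel. Everything else is routine.
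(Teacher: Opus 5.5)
Your proof is correct and is essentially the paper's argument: use optimality $\mathcal{R}_{\eta}(\bm{\hat x})\le\mathcal{R}_{\eta}(\bm{x}^*)$, factor both differences of squares, bound via the reverse triangle inequality and $\|\cdot\|_2\le\|\cdot\|_1$, divide by $\|\bm{\hat x}\|_1+\|\bm{x}^*\|_1$, and finish with the support-based lower bound on $\|\bm{\hat x}\|_1$. The only cosmetic difference is in the case split: the paper splits on the sign of $\|\bm{\hat x}\|_1-\|\bm{x}^*\|_1$, so in the nontrivial case the divisor is automatically positive and $\|\bm{\hat x}\|_2>\|\bm{x}^*\|_2$ is forced, whereas you split on the sign of $\|\bm{\hat x}\|_2-\|\bm{x}^*\|_2$ and handle $a+b=0$ separately.
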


\begin{proof}
	Since $\bm{x}^*\in\mathcal{C}$ and $\bm{\hat{x}}$ is a global minimizer of $\mathcal{R}_{\eta}$ over $\mathcal{C}$, we have
	\begin{equation}
		\label{eq:objective-induced-optimality}
		\|\bm{\hat{x}}\|_1^2-\eta\|\bm{\hat{x}}\|_2^2
		\le
		\|\bm{x}^*\|_1^2-\eta\|\bm{x}^*\|_2^2 .
	\end{equation}
	Rearranging \eqref{eq:objective-induced-optimality} gives
	\begin{equation}
		\label{eq:objective-induced-square-diff}
		\|\bm{\hat{x}}\|_1^2-\|\bm{x}^*\|_1^2
		\le
		\eta\left(\|\bm{\hat{x}}\|_2^2-\|\bm{x}^*\|_2^2\right).
	\end{equation}
	If $\|\bm{\hat{x}}\|_1\le \|\bm{x}^*\|_1$, then \eqref{eq:objective-induced-cone} follows immediately. Hence we may assume that $\|\bm{\hat{x}}\|_1>\|\bm{x}^*\|_1$. In this case, \eqref{eq:objective-induced-square-diff} excludes $\eta=0$, and therefore $\eta>0$. It follows from the difference-of-squares identity in \eqref{eq:objective-induced-square-diff} that
	\begin{equation}
		\label{eq:objective-induced-difference-squares}
		\bigl(\|\bm{\hat{x}}\|_1-\|\bm{x}^*\|_1\bigr)
		\bigl(\|\bm{\hat{x}}\|_1+\|\bm{x}^*\|_1\bigr)
		\le
		\eta
		\bigl(\|\bm{\hat{x}}\|_2-\|\bm{x}^*\|_2\bigr)
		\bigl(\|\bm{\hat{x}}\|_2+\|\bm{x}^*\|_2\bigr).
	\end{equation}
	The left-hand side of \eqref{eq:objective-induced-difference-squares} is positive, hence $\|\bm{\hat{x}}\|_2>\|\bm{x}^*\|_2$. Since $\|\bm{\hat{x}}\|_2-\|\bm{x}^*\|_2\le\|\bm{\hat{x}}-\bm{x}^*\|_2=\|\bm{h}\|_2$ and $\|\bm{\hat{x}}\|_2+\|\bm{x}^*\|_2\le\|\bm{\hat{x}}\|_1+\|\bm{x}^*\|_1$, it follows from \eqref{eq:objective-induced-difference-squares} that $\bigl(\|\bm{\hat{x}}\|_1-\|\bm{x}^*\|_1\bigr)\bigl(\|\bm{\hat{x}}\|_1+\|\bm{x}^*\|_1\bigr)\le\eta\|\bm{h}\|_2\bigl(\|\bm{\hat{x}}\|_1+\|\bm{x}^*\|_1\bigr)$. Dividing by the positive quantity $\|\bm{\hat{x}}\|_1+\|\bm{x}^*\|_1$ gives $\|\bm{\hat{x}}\|_1-\|\bm{x}^*\|_1 \le \eta\|\bm{h}\|_2$. Since $\operatorname{supp}(\bm{x}^*)\subseteq \mathcal{S}$, we have $\bm{x}^*_{\mathcal{S}^c}=0$, and hence
	\begin{equation*}
		\|\bm{\hat{x}}\|_1
		=
		\|\bm{x}^*_{\mathcal{S}}+\bm{h}_{\mathcal{S}}\|_1+\|\bm{h}_{\mathcal{S}^c}\|_1
		\ge
		\|\bm{x}^*\|_1-\|\bm{h}_{\mathcal{S}}\|_1+\|\bm{h}_{\mathcal{S}^c}\|_1 .
	\end{equation*}
	Therefore, we have $\|\bm{h}_{\mathcal{S}^c}\|_1-\|\bm{h}_{\mathcal{S}}\|_1\le\|\bm{\hat{x}}\|_1-\|\bm{x}^*\|_1\le\eta\|\bm{h}\|_2$, where the last inequality follows from $\|\bm{\hat{x}}\|_1-\|\bm{x}^*\|_1 \le \eta\|\bm{h}\|_2$. This is precisely \eqref{eq:objective-induced-cone}, and the proof is complete.
\end{proof}

\begin{theorem}\label{thm:uniform-exact-recovery-ansp}
	Assume that $\bm{A}$ satisfies the $(\eta,k)$-ANSP. Then, for every $k$-sparse vector $\bm{x}^*\in\mathbb{R}^n$ with $\bm{b}=\bm{A}\bm{x}^*$, the vector $\bm{x}^*$ is the unique global minimizer of $\mathcal{P}_{\eta}$.
\end{theorem}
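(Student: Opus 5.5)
The plan is to combine Lemma~\ref{lem:objective-induced-cone} with the $(\eta,k)$-ANSP in a short argument by contradiction. Existence of a global minimizer has to be settled first, since the lemma presupposes one.

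\textbf{Step 1: existence of a minimizer.} The feasible set $\mathcal{C}=\{\bm{x}:\bm{A}\bm{x}=\bm{b}\}$ is a nonempty affine set, because it contains $\bm{x}^*$. For $0\le\eta\le 1$ we have $\mathcal{R}_{\eta}(\bm{x})\ge (1-\eta)\|\bm{x}\|_1^2\ge 0$, but for $\eta=1$ this lower bound gives no coercivity. To avoid an existence argument altogether, we argue directly: let $\hat{\bm{x}}$ be any feasible point with $\mathcal{R}_{\eta}(\hat{\bm{x}})\le \mathcal{R}_{\eta}(\bm{x}^*)$. We will show that $\hat{\bm{x}}=\bm{x}^*$. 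This yields both that $\bm{x}^*$ is a global minimizer and that it is the unique one.

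\textbf{Step 2: the cone inequality for such $\hat{\bm{x}}$.} The proof of Lemma~\ref{lem:objective-induced-cone} uses global minimality of $\hat{\bm{x}}$ only through inequality \eqref{eq:objective-induced-optimality}, i.e.\ $\mathcal{R}_\eta(\hat{\bm{x}})\le\mathcal{R}_\eta(\bm{x}^*)$. Hence its conclusion holds verbatim for our $\hat{\bm{x}}$. Set $\bm{h}=\hat{\bm{x}}-\bm{x}^*$ and take $\mathcal{S}=\operatorname{supp}(\bm{x}^*)$, so that $|\mathcal{S}|\le k$. The lemma gives
\[
\|\bm{h}_{\mathcal{S}^c}\|_1\le \|\bm{h}_{\mathcal{S}}\|_1+\eta\|\bm{h}\|_2 .
\]

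\textbf{Step 3: contradiction with the ANSP.} Since both $\hat{\bm{x}}$ and $\bm{x}^*$ are feasible, $\bm{A}\bm{h}=\bm{0}$, so $\bm{h}\in\mathcal{N}(\bm{A})$. If $\bm{h}\neq\bm{0}$, then \eqref{eq:ansp-def} applied with this $\mathcal{S}$ gives
\[
\|\bm{h}_{\mathcal{S}}\|_1+\eta\|\bm{h}\|_2<\|\bm{h}_{\mathcal{S}^c}\|_1,
\]
which contradicts the inequality from Step 2. Therefore $\bm{h}=\bm{0}$, that is, $\hat{\bm{x}}=\bm{x}^*$.

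\textbf{Conclusion.} Every feasible point other than $\bm{x}^*$ has $\mathcal{R}_\eta$-value strictly larger than $\mathcal{R}_\eta(\bm{x}^*)$. Hence $\bm{x}^*$ is the unique global minimizer of $\mathcal{P}_{\eta}$.

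The only delicate point is Step 1: it justifies applying Lemma~\ref{lem:objective-induced-cone} without first knowing that a minimizer exists. Reusing the lemma's proof under the weaker hypothesis $\mathcal{R}_\eta(\hat{\bm{x}})\le\mathcal{R}_\eta(\bm{x}^*)$ resolves this cleanly.
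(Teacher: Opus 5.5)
Your proof is correct and is essentially the paper's argument in contrapositive form. The paper shows directly that for any feasible $\bm{x}\neq\bm{x}^*$ the ANSP forces $\|\bm{x}\|_1-\|\bm{x}^*\|_1>\eta\|\bm{h}\|_2$ and hence $\mathcal{R}_\eta(\bm{x})>\mathcal{R}_\eta(\bm{x}^*)$; this is the chain of estimates inside Lemma~\ref{lem:objective-induced-cone} read backwards, and like your version it never needs existence of a minimizer. You also need not reopen the lemma's proof: the lemma allows an arbitrary set $\mathcal{C}$, so you can apply it verbatim with $\mathcal{C}=\{\hat{\bm{x}},\bm{x}^*\}$, on which $\hat{\bm{x}}$ is a global minimizer exactly when $\mathcal{R}_\eta(\hat{\bm{x}})\le\mathcal{R}_\eta(\bm{x}^*)$.
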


\begin{proof}
	Let $\bm{x}^*$ be an arbitrary $k$-sparse vector and set $\mathcal{S}:=\operatorname{supp}(\bm{x}^*)$. Then $|\mathcal{S}|\le k$ and $\bm{x}^*$ is feasible for $\mathcal{P}_{\eta}$ with $\bm{b}=\bm{A}\bm{x}^*$. Let $\bm{x}$ be any feasible point of $\mathcal{P}_{\eta}$ and define
	$\bm{h}:=\bm{x}-\bm{x}^*$. Since $\bm{A}\bm{x}=\bm{b}$ and
	$\bm{A}\bm{x}^*=\bm{b}$, we have $\bm{A}\bm{h}=\bm{0}$, that is, $ \bm{h}\in\mathcal{N}(\bm{A})$. If $\bm{h}=\bm{0}$, then $\bm{x}=\bm{x}^*$. Hence it remains to consider
	the case $\bm{h}\ne\bm{0}$. Since $\mathcal{S}=\operatorname{supp}(\bm{x}^*)$, one has
	$\bm{x}^*_{\mathcal{S}^c}=\bm{0}$. From $\bm{x}=\bm{x}^*+\bm{h}$, it follows that $\bm{x}_{\mathcal{S}}=\bm{x}^*_{\mathcal{S}}+\bm{h}_{\mathcal{S}}$ and $\bm{x}_{\mathcal{S}^c}=\bm{h}_{\mathcal{S}^c}$. Therefore, one has
	\begin{equation}\label{eq:support-l1-decomposition}
		\begin{aligned}
			\|\bm{x}\|_1=\|\bm{x}_{\mathcal{S}}\|_1+\|\bm{x}_{\mathcal{S}^c}\|_1 =\|\bm{x}^*_{\mathcal{S}}+\bm{h}_{\mathcal{S}}\|_1+\|\bm{h}_{\mathcal{S}^c}\|_1 \ge \|\bm{x}^*_{\mathcal{S}}\|_1-\|\bm{h}_{\mathcal{S}}\|_1+\|\bm{h}_{\mathcal{S}^c}\|_1 .
		\end{aligned}
	\end{equation}
	Here the last inequality follows from the triangle inequality. Since$\|\bm{x}^*_{\mathcal{S}}\|_1=\|\bm{x}^*\|_1$, \eqref{eq:support-l1-decomposition} implies
	\begin{equation}\label{eq:uniform-l1-lower}
		\|\bm{x}\|_1-\|\bm{x}^*\|_1
		\ge
		\|\bm{h}_{\mathcal{S}^c}\|_1-\|\bm{h}_{\mathcal{S}}\|_1 .
	\end{equation}
	Because $\bm{A}$ satisfies the $(\eta,k)$-ANSP, and because $\bm{h}\in\mathcal{N}(\bm{A})\setminus\{\bm{0}\}$ and $|\mathcal{S}|\le k$, we have
	\begin{equation}\label{eq:uniform-ansp}
		\|\bm{h}_{\mathcal{S}^c}\|_1
		>
		\|\bm{h}_{\mathcal{S}}\|_1+\eta\|\bm{h}\|_2 .
	\end{equation}
	Combining \eqref{eq:uniform-l1-lower} with \eqref{eq:uniform-ansp} yields
	\begin{equation}\label{eq:uniform-l1-gap}
		\|\bm{x}\|_1-\|\bm{x}^*\|_1
		>
		\eta\|\bm{h}\|_2 .
	\end{equation}
	Since $0\le\eta\le 1$, \eqref{eq:uniform-l1-gap} implies
	$\|\bm{x}\|_1-\|\bm{x}^*\|_1>0$.
	Hence $\|\bm{x}\|_1>\|\bm{x}^*\|_1$. From the definition of $\mathcal{R}_{\eta}$, we have
	\begin{equation}\label{eq:uniform-objective-difference}
		\begin{aligned}
			\mathcal{R}_{\eta}(\bm{x})-\mathcal{R}_{\eta}(\bm{x}^*)
			&=
			\bigl(\|\bm{x}\|_1-\|\bm{x}^*\|_1\bigr)
			\bigl(\|\bm{x}\|_1+\|\bm{x}^*\|_1\bigr) \\
			&\quad
			-\eta
			\bigl(\|\bm{x}\|_2-\|\bm{x}^*\|_2\bigr)
			\bigl(\|\bm{x}\|_2+\|\bm{x}^*\|_2\bigr).
		\end{aligned}
	\end{equation}
	The reverse triangle inequality gives $\|\bm{x}\|_2-\|\bm{x}^*\|_2\le \|\bm{x}-\bm{x}^*\|_2=\|\bm{h}\|_2$, and since $\|\bm{z}\|_2\le\|\bm{z}\|_1$ for every $\bm{z}\in\mathbb{R}^n$, we also have $\|\bm{x}\|_2+\|\bm{x}^*\|_2 \le \|\bm{x}\|_1+\|\bm{x}^*\|_1$. It follows that
	\begin{equation}\label{eq:uniform-l2-square-bound}
		\|\bm{x}\|_2^2-\|\bm{x}^*\|_2^2
		\le
		\|\bm{h}\|_2
		\bigl(\|\bm{x}\|_1+\|\bm{x}^*\|_1\bigr).
	\end{equation}
	On the other hand, \eqref{eq:uniform-l1-gap} implies
	\begin{equation}\label{eq:uniform-l1-square-lower}
		\|\bm{x}\|_1^2-\|\bm{x}^*\|_1^2
		>
		\eta\|\bm{h}\|_2
		\bigl(\|\bm{x}\|_1+\|\bm{x}^*\|_1\bigr).
	\end{equation}
	Combining \eqref{eq:uniform-l2-square-bound} and \eqref{eq:uniform-l1-square-lower}, we obtain $\|\bm{x}\|_1^2-\|\bm{x}^*\|_1^2 > \eta\left(\|\bm{x}\|_2^2-\|\bm{x}^*\|_2^2\right)$, which is equivalent to $\mathcal{R}_{\eta}(\bm{x}) > \mathcal{R}_{\eta}(\bm{x}^*)$. Thus every feasible point $\bm{x}\ne\bm{x}^*$ has strictly larger objective value than $\bm{x}^*$. Hence $\bm{x}^*$ is the unique global minimizer of $\mathcal{P}_{\eta}$. Since $\bm{x}^*$ was arbitrary among all $k$-sparse vectors, the conclusion holds uniformly for every $k$-sparse signal. This proves the theorem.
\end{proof}

The $(\eta,k)$-ANSP in Definition~\ref{def:ansp} serves as a sufficient null space condition for exact recovery by the EC-WDSN model. We next derive a spreadness-type null space condition, expressed through the ratio $\|\bm{v}\|_1/\|\bm{v}\|_2$, that implies the $(\eta,k)$-ANSP.

\begin{definition}[Null-space $\ell_1/\ell_2$ spreadness bound]
	\label{def:null-space-l1-l2-ratio}
	Let $k\in\mathbb{N}$ and $\omega>0$. We say that $\bm{A}$ satisfies the null-space $\ell_1/\ell_2$ spreadness bound of order $k$ with constant $\omega$ if, for every $\bm{v}\in\mathcal{N}(\bm{A})\setminus\{\bm{0}\}$, one has
	\begin{equation}
		\label{eq:null-space-l1-l2-ratio}
		\|\bm{v}\|_2
		\le
		\frac{\omega}{\sqrt{k}}\|\bm{v}\|_1 .
	\end{equation}
\end{definition}

\begin{proposition}[] \label{prop:nsp-spreadness-to-ansp}
	Let $0 \le \eta\le 1$. Assume that $\bm{A}$ satisfies the $\ell_1$-NSP of order $k$ with constant $\gamma\in(0,1)$, namely, 
	$\|\bm{v}_{\mathcal{S}}\|_1\le \gamma\|\bm{v}_{\mathcal{S}^c}\|_1$
	for every $\bm{v}\in\mathcal{N}(\bm{A})\setminus\{\bm{0}\}$ and every index set $\mathcal{S}\subset\{1,\ldots,n\}$ with $|\mathcal{S}|\le k$. Suppose further that $\bm{A}$ satisfies the null-space $\ell_1/\ell_2$ spreadness bound \eqref{eq:null-space-l1-l2-ratio}. If
	$\gamma+\eta\omega(1+\gamma)/\sqrt{k}<1$,
	then $\bm{A}$ satisfies the $(\eta,k)$-ANSP.
\end{proposition}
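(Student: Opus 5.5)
Fix $\bm{v}\in\mathcal{N}(\bm{A})\setminus\{\bm{0}\}$ and an index set $\mathcal{S}$ with $|\mathcal{S}|\le k$. The plan is to bound both terms on the left of \eqref{eq:ansp-def} by multiples of $\|\bm{v}_{\mathcal{S}^c}\|_1$, and then to use the hypothesis $\gamma+\eta\omega(1+\gamma)/\sqrt{k}<1$ to get the strict inequality.

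First, apply the spreadness bound \eqref{eq:null-space-l1-l2-ratio} to the $\ell_2$ term. Splitting $\|\bm{v}\|_1=\|\bm{v}_{\mathcal{S}}\|_1+\|\bm{v}_{\mathcal{S}^c}\|_1$ and using the $\ell_1$-NSP $\|\bm{v}_{\mathcal{S}}\|_1\le\gamma\|\bm{v}_{\mathcal{S}^c}\|_1$ gives
\begin{equation*}
\eta\|\bm{v}\|_2\le \frac{\eta\omega}{\sqrt{k}}\bigl(\|\bm{v}_{\mathcal{S}}\|_1+\|\bm{v}_{\mathcal{S}^c}\|_1\bigr)\le \frac{\eta\omega(1+\gamma)}{\sqrt{k}}\|\bm{v}_{\mathcal{S}^c}\|_1 .
\end{equation*}
Adding the NSP bound for $\|\bm{v}_{\mathcal{S}}\|_1$ then yields
\begin{equation*}
\|\bm{v}_{\mathcal{S}}\|_1+\eta\|\bm{v}\|_2\le\Bigl(\gamma+\frac{\eta\omega(1+\gamma)}{\sqrt{k}}\Bigr)\|\bm{v}_{\mathcal{S}^c}\|_1 .
\end{equation*}

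The only delicate point is strictness. The inequality above gives $<\|\bm{v}_{\mathcal{S}^c}\|_1$ only when $\|\bm{v}_{\mathcal{S}^c}\|_1>0$, because the bracketed constant is strictly less than $1$. So I would show that $\bm{v}_{\mathcal{S}^c}\neq\bm{0}$. If $\bm{v}_{\mathcal{S}^c}=\bm{0}$, the NSP with $\gamma<1$ forces $\|\bm{v}_{\mathcal{S}}\|_1\le 0$. Then $\bm{v}=\bm{0}$, which contradicts the choice of $\bm{v}$. Hence $\|\bm{v}_{\mathcal{S}^c}\|_1>0$, and \eqref{eq:ansp-def} holds with strict inequality.

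Since $\bm{v}$ and $\mathcal{S}$ were arbitrary, $\bm{A}$ satisfies the $(\eta,k)$-ANSP. The argument is routine; the only real care needed is this nonvanishing of $\bm{v}_{\mathcal{S}^c}$.
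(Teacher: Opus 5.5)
Your proof is correct and follows the paper's argument: bound $\eta\|\bm{v}\|_2$ via the spreadness bound and the $\ell_1$-NSP, then add the NSP bound for $\|\bm{v}_{\mathcal{S}}\|_1$ to get the coefficient $\gamma+\eta\omega(1+\gamma)/\sqrt{k}<1$ in front of $\|\bm{v}_{\mathcal{S}^c}\|_1$. Your explicit check that $\bm{v}_{\mathcal{S}^c}\neq\bm{0}$ supplies the one step the paper leaves implicit when it passes from the non-strict bound to the strict inequality.
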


\begin{proof}
	Let $\bm{v}\in\mathcal{N}(\bm{A})\setminus\{\bm{0}\}$ and let
	$\mathcal{S}\subset\{1,\ldots,n\}$ satisfy $|\mathcal{S}|\le k$. By the assumed $\ell_1$-NSP, we have
	\begin{equation}
		\label{eq:tail-controls-full-l1}
		\|\bm{v}\|_1
		=
		\|\bm{v}_{\mathcal{S}}\|_1+\|\bm{v}_{\mathcal{S}^c}\|_1
		\le
		(1+\gamma)\|\bm{v}_{\mathcal{S}^c}\|_1 .
	\end{equation}
	Combining \eqref{eq:null-space-l1-l2-ratio} with \eqref{eq:tail-controls-full-l1}, we obtain
	\begin{equation}
		\label{eq:l2-tail-bound-spreadness}
		\|\bm{v}\|_2
		\le
		\frac{\omega}{\sqrt{k}}\|\bm{v}\|_1
		\le
		\frac{\omega(1+\gamma)}{\sqrt{k}}
		\|\bm{v}_{\mathcal{S}^c}\|_1 .
	\end{equation}
	It follows from the assumed $\ell_1$-NSP and \eqref{eq:l2-tail-bound-spreadness} that
	\begin{equation}
		\label{eq:ansp-upper-from-spreadness}
		\|\bm{v}_{\mathcal{S}}\|_1+\eta\|\bm{v}\|_2
		\le
		\gamma\|\bm{v}_{\mathcal{S}^c}\|_1
		+
		\frac{\eta\omega(1+\gamma)}{\sqrt{k}}\|\bm{v}_{\mathcal{S}^c}\|_1
		=
		\left(\gamma+\frac{\eta\omega(1+\gamma)}{\sqrt{k}}\right)
		\|\bm{v}_{\mathcal{S}^c}\|_1 .
	\end{equation}
	By the condition $\gamma+\eta\omega(1+\gamma)/\sqrt{k}<1$, the coefficient in \eqref{eq:ansp-upper-from-spreadness} is strictly less than one. Hence
	$\|\bm{v}_{\mathcal{S}}\|_1+\eta\|\bm{v}\|_2 < \|\bm{v}_{\mathcal{S}^c}\|_1$.
	Since $\bm{v}\in\mathcal{N}(\bm{A})\setminus\{\bm{0}\}$ and $\mathcal{S}$ with $|\mathcal{S}|\le k$ were arbitrary, $\bm{A}$ satisfies the $(\eta,k)$-ANSP.
\end{proof}

\begin{remark}[]
	\label{rem:null-space-spreadness-bound}
	Condition \eqref{eq:null-space-l1-l2-ratio} requires nonzero null-space vectors to be sufficiently spread in the $\ell_1/\ell_2$ sense. Equivalently, it imposes $\frac{\|\bm{v}\|_1}{\|\bm{v}\|_2}\ge\frac{\sqrt{k}}{\omega}$ for every $\bm{v}\in\mathcal{N}(\bm{A})\setminus\{\bm{0}\}$. Thus smaller values of $\omega$ exclude null-space vectors that are too concentrated. In Proposition~\ref{prop:nsp-spreadness-to-ansp}, this assumption is useful only when $\omega$ is small enough to satisfy $\omega<\frac{\sqrt{k}(1-\gamma)}{\eta(1+\gamma)}$ with $\eta>0$. By contrast, the trivial inequality $\|\bm{v}\|_2\le\|\bm{v}\|_1$ corresponds to the choice $\omega=\sqrt{k}$, and therefore does not by itself provide a nontrivial spreadness guarantee.
\end{remark}

\begin{corollary}[]
	\label{cor:nsp-ratio-uniform-exact-recovery}
	Assume that $\bm{A}$ satisfies the $\ell_1$-NSP of order $k$ with constant $\gamma\in(0,1)$ and the null-space $\ell_1/\ell_2$ spreadness bound of order $k$ with constant $\omega>0$. If $\gamma+\eta\omega(1+\gamma)/\sqrt{k}<1$, then, for every $k$-sparse vector $\bm{x}^*\in\mathbb{R}^n$ with $\bm{b}=\bm{A}\bm{x}^*$, the vector $\bm{x}^*$ is the unique global minimizer of $\mathcal{P}_{\eta}$.
\end{corollary}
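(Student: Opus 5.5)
The corollary follows by chaining the two results just established. Under its hypotheses, the $\ell_1$-NSP of order $k$ with constant $\gamma\in(0,1)$, the null-space $\ell_1/\ell_2$ spreadness bound \eqref{eq:null-space-l1-l2-ratio} with constant $\omega>0$, and the inequality $\gamma+\eta\omega(1+\gamma)/\sqrt{k}<1$, are exactly the assumptions of Proposition~\ref{prop:nsp-spreadness-to-ansp}. Applying that proposition, I conclude that $\bm{A}$ satisfies the $(\eta,k)$-ANSP of Definition~\ref{def:ansp}.

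Concretely, for every $\bm{v}\in\mathcal{N}(\bm{A})\setminus\{\bm{0}\}$ and every $\mathcal{S}$ with $|\mathcal{S}|\le k$, this gives
\[
\|\bm{v}_{\mathcal{S}}\|_1+\eta\|\bm{v}\|_2<\|\bm{v}_{\mathcal{S}^c}\|_1 .
\]
The second step invokes Theorem~\ref{thm:uniform-exact-recovery-ansp} with this ANSP. It yields that, for every $k$-sparse $\bm{x}^*$ and $\bm{b}=\bm{A}\bm{x}^*$, the vector $\bm{x}^*$ is the unique global minimizer of $\mathcal{P}_{\eta}$, which is the claim.

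There is no genuine obstacle. The only thing to check is bookkeeping: $\eta$ ranges over $[0,1]$, as both earlier results require, and the sparsity order $k$ used in the NSP, in the spreadness bound and in the ANSP is the same. When $\eta=0$ the condition reduces to $\gamma<1$, and the argument passes through Remark~\ref{rem:ansp-classical} unchanged, so no separate case is needed.
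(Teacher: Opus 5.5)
Your proposal is correct and matches the paper's proof: it invokes Proposition~\ref{prop:nsp-spreadness-to-ansp} to obtain the $(\eta,k)$-ANSP and then applies Theorem~\ref{thm:uniform-exact-recovery-ansp}. Your bookkeeping remarks (the standing assumption $\eta\in[0,1]$, the common order $k$, and the $\eta=0$ case) are consistent with how the paper uses these results.
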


\begin{proof}
	By Proposition \ref{prop:nsp-spreadness-to-ansp}, the $\ell_1$-NSP, the null-space $\ell_1/\ell_2$ spreadness bound, and condition $\gamma+\eta\omega(1+\gamma)/\sqrt{k}<1$ imply that $\bm{A}$ satisfies the $(\eta,k)$-ANSP. Therefore, Theorem~\ref{thm:uniform-exact-recovery-ansp} applies and gives the stated uniform exact recovery result.
\end{proof}

\begin{remark}[]\label{rem:eta-effect-ansp}
	Condition $\gamma+\eta\omega(1+\gamma)/\sqrt{k}<1$ shows how the parameter $\eta$ affects the null-space requirement. When $\eta=0$, the condition reduces to $\gamma<1$, which is the standard admissible range for the classical $\ell_1$-NSP. For $\eta>0$, the additional term $\frac{\eta\omega(1+\gamma)}{\sqrt{k}}$ reflects the contribution of the negative quadratic part $-\eta\|\bm{x}\|_2^2$ in the WDSN functional. Thus, for fixed $\gamma$, $\omega$, and $k$, a larger value of $\eta$ imposes a stronger null-space requirement. Conversely, a smaller spreadness-bound constant $\omega$ or a larger sparsity level $k$ makes the condition easier to satisfy.
\end{remark}

\section{Stable Recovery under a $\delta_{2k}$-RIP Condition}
\label{sec:stable-recovery-delta-2k}

The preceding section provided a null-space characterization of exact recovery for the noiseless model. We now turn to the noisy measurements and study stable recovery for $\mathcal{P}_{\eta}^{\varepsilon}$. Let the measurements satisfy $\bm{b}=\bm{A}\bm{x}^*+\bm{e}$ with $\|\bm{e}\|_2\le \varepsilon$, where $\bm{x}^*\in\mathbb{R}^n$ denotes the ground-truth signal. Throughout this section, we assume that $\bm{x}^*$ is $k$-sparse, namely $\|\bm{x}^*\|_0\le k$. We first recall the restricted isometry property (RIP).

\begin{definition}[RIP and Restricted isometry constant]
	\label{def:rip}
	For a positive integer $s$, the restricted isometry constant (RIC) $\delta_s$ of 
	$\bm{A}\in\mathbb{R}^{m\times n}$ is defined as the smallest number $\delta\ge0$ 
	such that
	\begin{equation}
		\label{eq:rip}
		(1-\delta)\|\bm{z}\|_2^2
		\le
		\|\bm{A}\bm{z}\|_2^2
		\le
		(1+\delta)\|\bm{z}\|_2^2
	\end{equation}
	for every $\bm{z}\in\mathbb{R}^n$ satisfying $\|\bm{z}\|_0\le s$. 
	The matrix $\bm{A}$ is said to satisfy the RIP of order $s$ if $\delta_s<1$.
\end{definition}

%We shall also use the following standard consequence of the RIP to control interactions between disjoint error blocks within the $2k$-sparse regime.

\begin{lemma}[Restricted Orthogonality]
	\label{lem:restricted-orthogonality}
	Suppose that $\bm{A}$ satisfies the RIP of order $s$ with constant $\delta_s$. Let $\bm{u},\bm{v}\in\mathbb{R}^n$ have disjoint supports and assume that $|\operatorname{supp}(\bm{u})\cup \operatorname{supp}(\bm{v})| \le s$. Then $|\langle \bm{A}\bm{u},\bm{A}\bm{v}\rangle| \le \delta_s\|\bm{u}\|_2\|\bm{v}\|_2$.
\end{lemma}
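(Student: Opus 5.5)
We prove the restricted orthogonality bound by the standard polarization argument applied to normalized vectors. If $\bm{u}=\bm{0}$ or $\bm{v}=\bm{0}$, both sides vanish and there is nothing to prove. Otherwise, both sides are homogeneous of degree one in each of $\bm{u}$ and $\bm{v}$. So we may replace them by $\bm{u}/\|\bm{u}\|_2$ and $\bm{v}/\|\bm{v}\|_2$ and assume $\|\bm{u}\|_2=\|\bm{v}\|_2=1$.

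The key observation is that $\bm{u}+\bm{v}$ and $\bm{u}-\bm{v}$ are supported in $\operatorname{supp}(\bm{u})\cup\operatorname{supp}(\bm{v})$, so they are $s$-sparse. Since the supports are disjoint, they also satisfy $\|\bm{u}\pm\bm{v}\|_2^2=\|\bm{u}\|_2^2+\|\bm{v}\|_2^2=2$. We then apply the polarization identity
\begin{equation*}
\langle \bm{A}\bm{u},\bm{A}\bm{v}\rangle=\tfrac14\bigl(\|\bm{A}(\bm{u}+\bm{v})\|_2^2-\|\bm{A}(\bm{u}-\bm{v})\|_2^2\bigr).
\end{equation*}
Invoking the RIP inequality \eqref{eq:rip} of Definition~\ref{def:rip} with $\bm{z}=\bm{u}\pm\bm{v}$ gives
\begin{equation*}
\|\bm{A}(\bm{u}+\bm{v})\|_2^2\le 2(1+\delta_s),\qquad \|\bm{A}(\bm{u}-\bm{v})\|_2^2\ge 2(1-\delta_s).
\end{equation*}
Therefore $\langle \bm{A}\bm{u},\bm{A}\bm{v}\rangle\le \tfrac14\bigl(2(1+\delta_s)-2(1-\delta_s)\bigr)=\delta_s$.

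For the lower bound, apply the same estimate with $\bm{v}$ replaced by $-\bm{v}$, which has the same support and norm. This gives $-\langle \bm{A}\bm{u},\bm{A}\bm{v}\rangle\le\delta_s$, and hence $|\langle \bm{A}\bm{u},\bm{A}\bm{v}\rangle|\le\delta_s$ for unit vectors. Undoing the normalization yields $|\langle \bm{A}\bm{u},\bm{A}\bm{v}\rangle|\le\delta_s\|\bm{u}\|_2\|\bm{v}\|_2$.

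The only point needing care is the normalization step. Without it, the polarization argument only gives the weaker bound $\tfrac{\delta_s}{2}(\|\bm{u}\|_2^2+\|\bm{v}\|_2^2)$, and it is homogeneity that upgrades this to the product form. The rest is routine.
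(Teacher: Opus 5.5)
Your proof is correct and follows essentially the same route as the paper: normalize to unit vectors, use disjointness to get $\|\bm{u}\pm\bm{v}\|_2^2=2$, apply the RIP bounds to the $s$-sparse vectors $\bm{u}\pm\bm{v}$, and conclude via the polarization identity. The only cosmetic difference is that the paper bounds the absolute value of the polarization difference by $4\delta_s$ in one step, whereas you derive the one-sided bound and then replace $\bm{v}$ by $-\bm{v}$.
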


\begin{proof}
	If $\bm{u}=\bm{0}$ or $\bm{v}=\bm{0}$, then the asserted inequality follows immediately. It remains to consider the case $\bm{u}\neq\bm{0}$ and $\bm{v}\neq\bm{0}$. Set $\widetilde{\bm{u}}=\bm{u}/\|\bm{u}\|_2$ and $\widetilde{\bm{v}}=\bm{v}/\|\bm{v}\|_2$. Since $\bm{u}$ and $\bm{v}$ have disjoint supports, we have $\langle \widetilde{\bm{u}},\widetilde{\bm{v}}\rangle=0$, and hence $\|\widetilde{\bm{u}}+\widetilde{\bm{v}}\|_2^2 = \|\widetilde{\bm{u}}-\widetilde{\bm{v}}\|_2^2 = 2$. Moreover, both $\widetilde{\bm{u}}+\widetilde{\bm{v}}$ and $\widetilde{\bm{u}}-\widetilde{\bm{v}}$ are supported on $\operatorname{supp}(\bm{u})\cup\operatorname{supp}(\bm{v})$, whose cardinality is at most $s$. Therefore, the defining inequality for the RIC $\delta_s$ gives
	\begin{equation}
		\label{eq:rip-plus-minus}
		2(1-\delta_s)
		\le
		\|\bm{A}(\widetilde{\bm{u}}\pm\widetilde{\bm{v}})\|_2^2
		\le
		2(1+\delta_s).
	\end{equation}
	It follows from the polarization identity that
	\begin{equation}
		\label{eq:polarization-restricted-orthogonality}
		\langle \bm{A}\widetilde{\bm{u}},\bm{A}\widetilde{\bm{v}}\rangle
		=
		\frac{1}{4}
		\left(
		\|\bm{A}(\widetilde{\bm{u}}+\widetilde{\bm{v}})\|_2^2
		-
		\|\bm{A}(\widetilde{\bm{u}}-\widetilde{\bm{v}})\|_2^2
		\right).
	\end{equation}
	Using \eqref{eq:rip-plus-minus}, the difference in parentheses in \eqref{eq:polarization-restricted-orthogonality} is bounded in absolute value by $4\delta_s$. Consequently, $\left|\langle \bm{A}\widetilde{\bm{u}},\bm{A}\widetilde{\bm{v}}\rangle\right|\le \delta_s$. Multiplying this inequality by $\|\bm{u}\|_2\|\bm{v}\|_2$ yields $|\langle \bm{A}\bm{u},\bm{A}\bm{v}\rangle| \le \delta_s\|\bm{u}\|_2\|\bm{v}\|_2$. The proof is complete.
\end{proof}

Applying Lemma~\ref{lem:objective-induced-cone} to the noise-constrained feasible set $\mathcal{C}_{\varepsilon}:=\{\bm{x}\in\mathbb{R}^n:\|\bm{A}\bm{x}-\bm{b}\|_2\le\varepsilon\}$, we obtain an $\ell_1$-norm upper bound for the error $\bm{h}$ on $\mathcal{S}^c$. We next partition $\bm{h}_{\mathcal{S}^c}$ into magnitude-ordered blocks and derive upper bounds for their $\ell_2$-norms. These bounds will be used in the RIP analysis below.

\begin{lemma}[Blockwise tail bound]
	\label{lem:tail-estimate}
	Let $\mathcal{S}:=\operatorname{supp}(\bm{x}^*)$ with $|\mathcal{S}|\le k$, and let
	$\bm{\hat{x}}$ be a global minimizer of $\mathcal{P}_{\eta}^{\varepsilon}$. 
	Set $\bm{h}:=\bm{\hat{x}}-\bm{x}^*$. Let $\{\mathcal{S}_j\}_{j=1}^{J}$ be a magnitude-ordered partition of $\mathcal{S}^c$ associated with $\bm{h}$; that is,  $|\mathcal{S}_j|=k$ for $1\le j<J$, $|\mathcal{S}_J|\le k$, and $|h_i|\ge |h_\ell|$ whenever $i\in\mathcal{S}_j$, $\ell\in\mathcal{S}_{j+1}$, and $1\le j<J$. Set $\mathcal{T}_{01}:=\mathcal{S}\cup\mathcal{S}_1$. Then, we have
	\begin{equation}
		\label{eq:tail-bound}
		\sum_{j=2}^{J}\|\bm{h}_{\mathcal{S}_j}\|_2
		\le
		\|\bm{h}_{\mathcal{S}}\|_2
		+
		\frac{\eta}{\sqrt{k}}\|\bm{h}\|_2 ,
	\end{equation}
	and
	\begin{equation}
		\label{eq:tail-bound-T}
		\sum_{j=2}^{J}\|\bm{h}_{\mathcal{S}_j}\|_2
		\le
		\|\bm{h}_{\mathcal{T}_{01}}\|_2
		+
		\frac{\eta}{\sqrt{k}}\|\bm{h}\|_2 .
	\end{equation}
\end{lemma}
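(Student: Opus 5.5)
The plan is to combine the cone inequality from Lemma~\ref{lem:objective-induced-cone} with the standard shelling argument of Cand\`es. Two preliminary steps come first.

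\emph{Step 1 (cone inequality).} Apply Lemma~\ref{lem:objective-induced-cone} with feasible set $\mathcal{C}_{\varepsilon}=\{\bm{x}:\|\bm{A}\bm{x}-\bm{b}\|_2\le\varepsilon\}$. Since $\|\bm{A}\bm{x}^*-\bm{b}\|_2=\|\bm{e}\|_2\le\varepsilon$, the point $\bm{x}^*$ lies in $\mathcal{C}_{\varepsilon}$. The vector $\bm{\hat{x}}$ is a global minimizer over $\mathcal{C}_{\varepsilon}$, and $\operatorname{supp}(\bm{x}^*)\subseteq\mathcal{S}$. The lemma therefore gives
\[
\|\bm{h}_{\mathcal{S}^c}\|_1\le\|\bm{h}_{\mathcal{S}}\|_1+\eta\|\bm{h}\|_2 .
\]

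\emph{Step 2 (block comparison).} By the magnitude ordering, for $j\ge2$ every entry of $\bm{h}_{\mathcal{S}_j}$ is bounded in absolute value by every entry of $\bm{h}_{\mathcal{S}_{j-1}}$. Hence it is at most the average $\|\bm{h}_{\mathcal{S}_{j-1}}\|_1/k$, where we use $|\mathcal{S}_{j-1}|=k$ because $j-1<J$. Since $|\mathcal{S}_j|\le k$, this yields
\[
\|\bm{h}_{\mathcal{S}_j}\|_2\le\sqrt{k}\,\|\bm{h}_{\mathcal{S}_{j-1}}\|_\infty\le \sqrt{k}\cdot\frac{\|\bm{h}_{\mathcal{S}_{j-1}}\|_1}{k}=\frac{\|\bm{h}_{\mathcal{S}_{j-1}}\|_1}{\sqrt{k}} .
\]

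\emph{Step 3 (summing).} Summing over $j=2,\dots,J$ gives
\[
\sum_{j\ge2}\|\bm{h}_{\mathcal{S}_j}\|_2\le \frac{1}{\sqrt{k}}\sum_{j=1}^{J-1}\|\bm{h}_{\mathcal{S}_j}\|_1\le\frac{\|\bm{h}_{\mathcal{S}^c}\|_1}{\sqrt{k}} .
\]
Inserting Step 1 and using Cauchy--Schwarz on $\mathcal{S}$, namely $\|\bm{h}_{\mathcal{S}}\|_1\le\sqrt{|\mathcal{S}|}\,\|\bm{h}_{\mathcal{S}}\|_2\le\sqrt{k}\,\|\bm{h}_{\mathcal{S}}\|_2$, we obtain
\[
\sum_{j\ge2}\|\bm{h}_{\mathcal{S}_j}\|_2\le \|\bm{h}_{\mathcal{S}}\|_2+\frac{\eta}{\sqrt{k}}\|\bm{h}\|_2 ,
\]
which is \eqref{eq:tail-bound}. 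Inequality \eqref{eq:tail-bound-T} then follows at once, because $\mathcal{S}\subseteq\mathcal{T}_{01}$ implies $\|\bm{h}_{\mathcal{S}}\|_2\le\|\bm{h}_{\mathcal{T}_{01}}\|_2$.

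\emph{Main obstacle.} The delicate point is the edge cases. If $J\le1$, the sum in \eqref{eq:tail-bound} is empty and both inequalities are trivial. Otherwise the comparison in Step 2 must use that every block before the last has exactly $k$ elements; this is what justifies the bound $\|\bm{h}_{\mathcal{S}_{j-1}}\|_\infty\le\|\bm{h}_{\mathcal{S}_{j-1}}\|_1/k$. The last block $\mathcal{S}_J$ may be smaller than $k$, but only its cardinality bound $|\mathcal{S}_J|\le k$ enters, through $\|\bm{h}_{\mathcal{S}_J}\|_2\le\sqrt{k}\,\|\bm{h}_{\mathcal{S}_J}\|_\infty$, so no further care is needed there.
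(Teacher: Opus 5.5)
Your argument follows the paper's proof step for step. It compares the entries of $\bm{h}_{\mathcal{S}_j}$ with the average of $|\bm{h}|$ over the full block $\mathcal{S}_{j-1}$, sums to get $\|\bm{h}_{\mathcal{S}^c}\|_1/\sqrt{k}$, applies Lemma~\ref{lem:objective-induced-cone} on $\mathcal{C}_\varepsilon$, uses Cauchy--Schwarz on $\mathcal{S}$, and finishes with the inclusion $\mathcal{S}\subseteq\mathcal{T}_{01}$.

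There is, however, one false inequality in the displayed chain of Step~2, and you repeat it in your last paragraph. The chain $\|\bm{h}_{\mathcal{S}_j}\|_2\le\sqrt{k}\,\|\bm{h}_{\mathcal{S}_{j-1}}\|_\infty\le\sqrt{k}\,\|\bm{h}_{\mathcal{S}_{j-1}}\|_1/k$ puts the sup-norm on the wrong block. The bound $\|\bm{h}_{\mathcal{S}_{j-1}}\|_\infty\le\|\bm{h}_{\mathcal{S}_{j-1}}\|_1/k$ fails in general, because the largest entry of a block is at least its average. For example, take $k=2$ and $|\bm{h}_{\mathcal{S}_1}|=(2,1)$: then $2>3/2$.

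The correct chain is what your preceding prose sentence actually says, and it is what the paper uses:
\[
\|\bm{h}_{\mathcal{S}_j}\|_2\le\sqrt{|\mathcal{S}_j|}\,\|\bm{h}_{\mathcal{S}_j}\|_\infty\le\sqrt{k}\,\min_{\ell\in\mathcal{S}_{j-1}}|h_\ell|\le\sqrt{k}\,\frac{\|\bm{h}_{\mathcal{S}_{j-1}}\|_1}{k}=\frac{\|\bm{h}_{\mathcal{S}_{j-1}}\|_1}{\sqrt{k}}.
\]
With this correction your proof is complete and coincides with the paper's.
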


\begin{proof}
	If $J=1$, then the sum in \eqref{eq:tail-bound} is empty, and the assertion is immediate. Hence we assume $J\ge2$. By the magnitude-ordered construction of $\{\mathcal{S}_j\}_{j=1}^{J}$, for each $j=2,\ldots,J$, every entry of $\bm{h}_{\mathcal{S}_j}$ is no larger in magnitude than every entry of $\bm{h}_{\mathcal{S}_{j-1}}$. Since $|\mathcal{S}_{j-1}|=k$, it follows that
	\begin{equation}\label{eq:block-entry-bound}
		|h_i|
		\le
		\frac{1}{k}\|\bm{h}_{\mathcal{S}_{j-1}}\|_1,
		\quad
		i\in\mathcal{S}_j,\quad j=2,\ldots,J .
	\end{equation}
	Applying the pointwise bound \eqref{eq:block-entry-bound} to the squared
	$\ell_2$-norm of $\bm{h}_{\mathcal{S}_j}$ gives
	\begin{equation}\label{eq:block-l2-bound}
		\|\bm{h}_{\mathcal{S}_j}\|_2
		=
		\left(\sum_{i\in\mathcal{S}_j}|h_i|^2\right)^{1/2}
		\le
		\left(
		\frac{|\mathcal{S}_j|}{k^2}
		\right)^{1/2}
		\|\bm{h}_{\mathcal{S}_{j-1}}\|_1
		\le
		\frac{1}{\sqrt{k}}\|\bm{h}_{\mathcal{S}_{j-1}}\|_1 ,
		\qquad
		j=2,\ldots,J ,
	\end{equation}
	where the last inequality uses $|\mathcal{S}_j|\le k$. Summing \eqref{eq:block-l2-bound} over $j=2,\ldots,J$, one has
	\begin{equation}\label{eq:tail-first}
		\sum_{j=2}^{J}\|\bm{h}_{\mathcal{S}_j}\|_2
		\le
		\frac{1}{\sqrt{k}}
		\sum_{j=1}^{J-1}\|\bm{h}_{\mathcal{S}_j}\|_1
		\le
		\frac{1}{\sqrt{k}}\|\bm{h}_{\mathcal{S}^c}\|_1 .
	\end{equation}
	Since $\bm{x}^*$ is feasible for $\mathcal{C}_{\varepsilon}:=\{\bm{x}\in\mathbb{R}^n:\|\bm{A}\bm{x}-\bm{b}\|_2\le\varepsilon\}$ and $\bm{\hat{x}}$ is a global minimizer of $\mathcal{R}_{\eta}$ over this feasible set, Lemma~\ref{lem:objective-induced-cone} applied with $\mathcal{S}$ yields
	\begin{equation}
		\label{eq:objective-induced-cone-applied}
		\|\bm{h}_{\mathcal{S}^c}\|_1
		\le
		\|\bm{h}_{\mathcal{S}}\|_1+\eta\|\bm{h}\|_2 .
	\end{equation}
	Combining \eqref{eq:tail-first} and \eqref{eq:objective-induced-cone-applied}, we obtain
	\begin{equation}
		\label{eq:tail-after-objective-cone}
		\sum_{j=2}^{J}\|\bm{h}_{\mathcal{S}_j}\|_2
		\le
		\frac{1}{\sqrt{k}}\|\bm{h}_{\mathcal{S}}\|_1
		+
		\frac{\eta}{\sqrt{k}}\|\bm{h}\|_2 .
	\end{equation}
	Because $|\mathcal{S}|\le k$, the Cauchy--Schwarz inequality gives
	\begin{equation}
		\label{eq:support-l1-l2}
		\|\bm{h}_{\mathcal{S}}\|_1
		\le
		\sqrt{|\mathcal{S}|}\|\bm{h}_{\mathcal{S}}\|_2
		\le
		\sqrt{k}\|\bm{h}_{\mathcal{S}}\|_2 .
	\end{equation}
	Substituting \eqref{eq:support-l1-l2} into \eqref{eq:tail-after-objective-cone} gives $\sum_{j=2}^{J}\|\bm{h}_{\mathcal{S}_j}\|_2\le\|\bm{h}_{\mathcal{S}}\|_2+\frac{\eta}{\sqrt{k}}\|\bm{h}\|_2$, which proves \eqref{eq:tail-bound}. Finally, since $\mathcal{S}\subseteq\mathcal{T}_{01}$, we have $\|\bm{h}_{\mathcal{S}}\|_2\le\|\bm{h}_{\mathcal{T}_{01}}\|_2$. Therefore, \eqref{eq:tail-bound} immediately implies \eqref{eq:tail-bound-T}.
\end{proof}

Since \eqref{eq:tail-bound} and \eqref{eq:tail-bound-T} still contain $\|\bm{h}\|_2$ on the right-hand side, we need to relate the full error norm to the leading block.  The next lemma provides such a relation under the condition $\eta<\sqrt{k}$.

\begin{lemma}[Leading-block control of the error]
	\label{lem:local-global}
	Under the notation and assumptions of Lemma~\ref{lem:tail-estimate}, the following bound holds:
	\begin{equation}
		\label{eq:local-global}
		\|\bm{h}_{\mathcal{T}_{01}}\|_2
		\ge
		\frac{1-\eta/\sqrt{k}}{2}\|\bm{h}\|_2 .
	\end{equation}
\end{lemma}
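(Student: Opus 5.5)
We split the error along the partition, $\bm{h}=\bm{h}_{\mathcal{T}_{01}}+\sum_{j\ge2}\bm{h}_{\mathcal{S}_j}$. The aim is to bound $\|\bm{h}\|_2$ by a multiple of $\|\bm{h}_{\mathcal{T}_{01}}\|_2$ plus a small multiple of $\|\bm{h}\|_2$ itself, and then absorb the latter. No RIP is needed: the statement is purely a consequence of the cone inequality of Lemma~\ref{lem:objective-induced-cone}, in the form used in Lemma~\ref{lem:tail-estimate}.

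First, the triangle inequality gives
\begin{equation*}
\|\bm{h}\|_2\le \|\bm{h}_{\mathcal{T}_{01}}\|_2+\Bigl\|\sum_{j\ge2}\bm{h}_{\mathcal{S}_j}\Bigr\|_2\le \|\bm{h}_{\mathcal{T}_{01}}\|_2+\sum_{j=2}^{J}\|\bm{h}_{\mathcal{S}_j}\|_2 .
\end{equation*}
(The sharper Pythagorean identity is also available because the blocks are disjoint, but the crude bound already gives the constant $2$ in the statement.) Next, invoke \eqref{eq:tail-bound-T} from Lemma~\ref{lem:tail-estimate} to bound the tail sum. This yields
\begin{equation*}
\|\bm{h}\|_2\le 2\|\bm{h}_{\mathcal{T}_{01}}\|_2+\frac{\eta}{\sqrt{k}}\|\bm{h}\|_2 .
\end{equation*}
Moving the last term to the left gives $(1-\eta/\sqrt{k})\|\bm{h}\|_2\le 2\|\bm{h}_{\mathcal{T}_{01}}\|_2$, which is exactly \eqref{eq:local-global}.

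The only point needing care is the sign of the coefficient $1-\eta/\sqrt{k}$. Since $0\le\eta\le1\le\sqrt{k}$, it is nonnegative, so the rearrangement is legitimate. When it equals zero ($\eta=1$, $k=1$) the claim is trivial, since the right-hand side vanishes. The degenerate case $J=1$ (empty tail) is also covered, because then $\bm{h}=\bm{h}_{\mathcal{T}_{01}}$ and the inequality holds trivially. I do not expect a real obstacle. The main thing to verify is that \eqref{eq:tail-bound-T} is used with $\mathcal{S}=\operatorname{supp}(\bm{x}^*)$ and that $\bm{\hat{x}}$ is a global minimizer, which are precisely the standing assumptions inherited from Lemma~\ref{lem:tail-estimate}.
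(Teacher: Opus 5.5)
Your proposal is correct and follows essentially the same route as the paper: the triangle inequality along the decomposition $\bm{h}=\bm{h}_{\mathcal{T}_{01}}+\sum_{j\ge2}\bm{h}_{\mathcal{S}_j}$, then \eqref{eq:tail-bound-T} for the tail sum, then rearranging and dividing by $2$. One small point: moving $\frac{\eta}{\sqrt{k}}\|\bm{h}\|_2$ to the left side and dividing by $2$ is valid regardless of the sign of $1-\eta/\sqrt{k}$, since you never divide by that coefficient, so the sign check you flag is not actually needed for this lemma.
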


\begin{proof}
	The disjoint decomposition
	$\bm{h}=\bm{h}_{\mathcal{T}_{01}}+\sum_{j=2}^{J}\bm{h}_{\mathcal{S}_j}$
	and the triangle inequality give
	\begin{equation}
		\label{eq:local-global-triangle}
		\|\bm{h}\|_2
		\le
		\|\bm{h}_{\mathcal{T}_{01}}\|_2
		+
		\sum_{j=2}^{J}\|\bm{h}_{\mathcal{S}_j}\|_2 .
	\end{equation}
	Using the blockwise tail bound \eqref{eq:tail-bound-T} in 
	\eqref{eq:local-global-triangle}, we obtain $\|\bm{h}\|_2\le2\|\bm{h}_{\mathcal{T}_{01}}\|_2+\frac{\eta}{\sqrt{k}}\|\bm{h}\|_2$ . Hence
	\begin{equation}
		\label{eq:local-global-rearranged}
		\left(1-\frac{\eta}{\sqrt{k}}\right)\|\bm{h}\|_2
		\le
		2\|\bm{h}_{\mathcal{T}_{01}}\|_2 .
	\end{equation}
	Since $k\ge1$ and $\eta\in[0,1]$, we have $1-\eta/\sqrt{k}\ge0$. Dividing
	\eqref{eq:local-global-rearranged} by $2$ yields \eqref{eq:local-global}.
\end{proof}

\begin{theorem}[Stable recovery under a $\delta_{2k}$-RIP condition]
	\label{thm:stable-recovery-delta-2k}
	Let $\bm{x}^*\in\mathbb{R}^n$ be $k$-sparse, and suppose that $\bm{b}=\bm{A}\bm{x}^*+\bm{e}$ with $\|\bm{e}\|_2\le \varepsilon$. Let $\bm{\hat{x}}$ be a global minimizer of $\mathcal{P}_{\eta}^{\varepsilon}$, and set $\bm{h}:=\bm{\hat{x}}-\bm{x}^*$. If $\bm{A}$ satisfies the RIP of order $2k$ with RIC $\delta_{2k}$ and
	\begin{equation}
		\label{eq:delta-2k-condition}
		\delta_{2k}
		<
		\frac{
			1-\frac{\eta}{\sqrt{k}}
		}{
			(1+\sqrt{2})+(\sqrt{2}-1)\frac{\eta}{\sqrt{k}}
		},
	\end{equation}
	then
	\begin{equation}
		\label{eq:stable-error-bound}
		\|\bm{\hat{x}}-\bm{x}^*\|_2
		\le
		\frac{
			4\sqrt{1+\delta_{2k}}
		}{
			\left(1-\frac{\eta}{\sqrt{k}}\right)
			-
			\delta_{2k}
			\left[
			1+\sqrt{2}
			+
			(\sqrt{2}-1)\frac{\eta}{\sqrt{k}}
			\right]
		}
		\varepsilon .
	\end{equation}
\end{theorem}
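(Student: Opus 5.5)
The plan is the classical Cand\`es-type RIP argument, with the $\eta$-dependent tail term from Lemma~\ref{lem:tail-estimate} carried along and then absorbed via Lemma~\ref{lem:local-global}. Write $t:=\eta/\sqrt{k}\in[0,1]$ and $\delta:=\delta_{2k}$, and keep the magnitude-ordered partition $\{\mathcal{S}_j\}$ of $\mathcal{S}^c$ and $\mathcal{T}_{01}=\mathcal{S}\cup\mathcal{S}_1$ from Lemma~\ref{lem:tail-estimate}.

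\emph{Tube constraint.} Both $\bm{\hat{x}}$ and $\bm{x}^*$ lie in $\mathcal{C}_\varepsilon$, so $\|\bm{A}\bm{h}\|_2\le 2\varepsilon$.

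\emph{Expanding $\|\bm{A}\bm{h}_{\mathcal{T}_{01}}\|_2^2$.} I will use
\[
\|\bm{A}\bm{h}_{\mathcal{T}_{01}}\|_2^2=\langle \bm{A}\bm{h}_{\mathcal{T}_{01}},\bm{A}\bm{h}\rangle-\sum_{j\ge2}\langle \bm{A}\bm{h}_{\mathcal{T}_{01}},\bm{A}\bm{h}_{\mathcal{S}_j}\rangle .
\]
\begin{enumerate}[(i)]
\item Since $|\mathcal{T}_{01}|\le 2k$, the RIP gives the lower bound $\|\bm{A}\bm{h}_{\mathcal{T}_{01}}\|_2^2\ge(1-\delta)\|\bm{h}_{\mathcal{T}_{01}}\|_2^2$.
\item The first term on the right is at most $2\varepsilon\sqrt{1+\delta}\,\|\bm{h}_{\mathcal{T}_{01}}\|_2$, by Cauchy--Schwarz and the RIP.
\item For each $j\ge2$, split $\bm{h}_{\mathcal{T}_{01}}=\bm{h}_{\mathcal{S}}+\bm{h}_{\mathcal{S}_1}$. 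Each of the pairs $(\mathcal{S},\mathcal{S}_j)$ and $(\mathcal{S}_1,\mathcal{S}_j)$ is disjoint with union of size at most $2k$. Lemma~\ref{lem:restricted-orthogonality} then gives
\[
|\langle \bm{A}\bm{h}_{\mathcal{T}_{01}},\bm{A}\bm{h}_{\mathcal{S}_j}\rangle|\le\delta(\|\bm{h}_{\mathcal{S}}\|_2+\|\bm{h}_{\mathcal{S}_1}\|_2)\|\bm{h}_{\mathcal{S}_j}\|_2\le\sqrt2\,\delta\,\|\bm{h}_{\mathcal{T}_{01}}\|_2\|\bm{h}_{\mathcal{S}_j}\|_2 .
\]
\end{enumerate}
If $\bm{h}_{\mathcal{T}_{01}}=\bm{0}$, then Lemma~\ref{lem:local-global} gives $(1-t)\|\bm{h}\|_2\le0$, which forces $\bm{h}=\bm{0}$ because $t<1$ under \eqref{eq:delta-2k-condition} (whose right-hand side must be positive). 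Otherwise, dividing by $\|\bm{h}_{\mathcal{T}_{01}}\|_2$ and inserting \eqref{eq:tail-bound-T} gives
\[
(1-\delta)\|\bm{h}_{\mathcal{T}_{01}}\|_2\le 2\sqrt{1+\delta}\,\varepsilon+\sqrt2\,\delta\bigl(\|\bm{h}_{\mathcal{T}_{01}}\|_2+t\|\bm{h}\|_2\bigr),
\]
that is,
\[
\bigl(1-(1+\sqrt2)\delta\bigr)\|\bm{h}_{\mathcal{T}_{01}}\|_2\le 2\sqrt{1+\delta}\,\varepsilon+\sqrt2\,\delta\, t\,\|\bm{h}\|_2 .
\]

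\emph{Closing the estimate.} The main obstacle is that the right-hand side still contains the global quantity $\|\bm{h}\|_2$. This is the point of Lemma~\ref{lem:local-global}. First observe that \eqref{eq:delta-2k-condition} implies $\delta<1/(1+\sqrt2)$, because its right-hand side is at most $1/(1+\sqrt2)$. Hence the coefficient $1-(1+\sqrt2)\delta$ is positive, and I can apply $\|\bm{h}_{\mathcal{T}_{01}}\|_2\ge\frac{1-t}{2}\|\bm{h}\|_2$ on the left. Multiplying through by $2$ gives
\[
\bigl[(1-(1+\sqrt2)\delta)(1-t)-2\sqrt2\,\delta t\bigr]\|\bm{h}\|_2\le 4\sqrt{1+\delta}\,\varepsilon .
\]
A routine expansion shows the bracket equals $(1-t)-\delta\bigl[1+\sqrt2+(\sqrt2-1)t\bigr]$. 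This is positive exactly under \eqref{eq:delta-2k-condition}, and dividing by it yields \eqref{eq:stable-error-bound}.
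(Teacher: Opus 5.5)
Your proposal is correct and follows the paper's proof essentially step for step: the $2\varepsilon$ tube bound, the expansion of $\|\bm{A}\bm{h}_{\mathcal{T}_{01}}\|_2^2$ with cross terms split over $\mathcal{S}$ and $\mathcal{S}_1$ so that only $\delta_{2k}$ is used, insertion of \eqref{eq:tail-bound-T}, the separate treatment of $\bm{h}_{\mathcal{T}_{01}}=\bm{0}$, and closing via Lemma~\ref{lem:local-global} after noting $\delta_{2k}<1/(1+\sqrt{2})$. Your coefficient simplification to $(1-t)-\delta\left[1+\sqrt{2}+(\sqrt{2}-1)t\right]$ matches the paper's exactly.
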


\begin{proof}
	Set $\bm{h}:=\bm{\hat{x}}-\bm{x}^*$ and $\theta:=\eta/\sqrt{k}$. Under the standing assumption $\eta\in[0,1]$, we have $\theta\ge0$. Moreover, since $\delta_{2k}\ge0$ and the denominator in \eqref{eq:delta-2k-condition} is positive, condition \eqref{eq:delta-2k-condition} implies $1-\theta>0$, and hence $\theta<1$.
	
	Since both $\bm{\hat{x}}$ and $\bm{x}^*$ are feasible for $\mathcal{P}_{\eta}^{\varepsilon}$, one has $\|\bm{A}\bm{\hat{x}}-\bm{b}\|_2\le\varepsilon$ and $\|\bm{A}\bm{x}^*-\bm{b}\|_2=\|\bm{e}\|_2\le\varepsilon$. Therefore, by the triangle inequality, one has
	\begin{equation}
		\label{eq:Ah-bound}
		\|\bm{A}\bm{h}\|_2
		=
		\|\bm{A}\bm{\hat{x}}-\bm{A}\bm{x}^*\|_2
		\le
		\|\bm{A}\bm{\hat{x}}-\bm{b}\|_2
		+
		\|\bm{b}-\bm{A}\bm{x}^*\|_2
		\le
		2\varepsilon .
	\end{equation}
	Let $\mathcal{S}:=\operatorname{supp}(\bm{x}^*)$ and $\mathcal{T}_{01}:=\mathcal{S}\cup\mathcal{S}_1$, where $\mathcal{S}_1$ contains the indices of the largest $k$ entries of $\bm{h}_{\mathcal{S}^c}$ in magnitude. Then $|\mathcal{T}_{01}|\le|\mathcal{S}|+|\mathcal{S}_1|\le2k$. Using the disjoint decomposition $\bm{h}=\bm{h}_{\mathcal{T}_{01}}+\sum_{j=2}^{J}\bm{h}_{\mathcal{S}_j}$, we obtain
	\begin{equation*}
		\label{eq:inner-expansion}
		\langle \bm{A}\bm{h}_{\mathcal{T}_{01}},\bm{A}\bm{h}\rangle
		=
		\|\bm{A}\bm{h}_{\mathcal{T}_{01}}\|_2^2
		+
		\sum_{j=2}^{J}
		\langle \bm{A}\bm{h}_{\mathcal{T}_{01}},
		\bm{A}\bm{h}_{\mathcal{S}_j}\rangle .
	\end{equation*}
	Since $\bm{h}_{\mathcal{T}_{01}}$ is supported on a set of cardinality at most $2k$, the RIP of order $2k$ gives
	\begin{equation*}
		\label{eq:main-lower}
		(1-\delta_{2k})\|\bm{h}_{\mathcal{T}_{01}}\|_2^2
		\le
		\|\bm{A}\bm{h}_{\mathcal{T}_{01}}\|_2^2
		=
		\langle \bm{A}\bm{h}_{\mathcal{T}_{01}},\bm{A}\bm{h}\rangle
		-
		\sum_{j=2}^{J}
		\langle \bm{A}\bm{h}_{\mathcal{T}_{01}},
		\bm{A}\bm{h}_{\mathcal{S}_j}\rangle .
	\end{equation*}
	Taking absolute values on the right-hand side gives
	\begin{equation}
		\label{eq:main-abs}
		(1-\delta_{2k})\|\bm{h}_{\mathcal{T}_{01}}\|_2^2
		\le
		|\langle \bm{A}\bm{h}_{\mathcal{T}_{01}},\bm{A}\bm{h}\rangle|
		+
		\sum_{j=2}^{J}
		|\langle \bm{A}\bm{h}_{\mathcal{T}_{01}},
		\bm{A}\bm{h}_{\mathcal{S}_j}\rangle| .
	\end{equation}
	By the Cauchy--Schwarz inequality, one has $|\langle \bm{A}\bm{h}_{\mathcal{T}_{01}},\bm{A}\bm{h}\rangle| \le \|\bm{A}\bm{h}_{\mathcal{T}_{01}}\|_2\|\bm{A}\bm{h}\|_2$. Using the RIP upper bound for $\bm{h}_{\mathcal{T}_{01}}$ and \eqref{eq:Ah-bound}, we get
	\begin{equation}
		\label{eq:first-term-bound}
		|\langle \bm{A}\bm{h}_{\mathcal{T}_{01}},\bm{A}\bm{h}\rangle|
		\le
		\|\bm{A}\bm{h}_{\mathcal{T}_{01}}\|_2\|\bm{A}\bm{h}\|_2
		\le
		2\varepsilon\sqrt{1+\delta_{2k}}\|\bm{h}_{\mathcal{T}_{01}}\|_2 .
	\end{equation}
	
	It remains to estimate the cross terms in \eqref{eq:main-abs} without invoking an RIP constant of order larger than $2k$. For this purpose, we split $\bm{h}_{\mathcal{T}_{01}}=\bm{h}_{\mathcal{S}}+\bm{h}_{\mathcal{S}_1}$. For every $j=2,\ldots,J$, we have
	\begin{equation}
		\label{eq:split-cross-term}
		\langle \bm{A}\bm{h}_{\mathcal{T}_{01}},
		\bm{A}\bm{h}_{\mathcal{S}_j}\rangle
		=
		\langle \bm{A}\bm{h}_{\mathcal{S}},
		\bm{A}\bm{h}_{\mathcal{S}_j}\rangle
		+
		\langle \bm{A}\bm{h}_{\mathcal{S}_1},
		\bm{A}\bm{h}_{\mathcal{S}_j}\rangle .
	\end{equation}
	The sets $\mathcal{S}$ and $\mathcal{S}_j$ are disjoint and satisfy $|\mathcal{S}\cup\mathcal{S}_j|\le|\mathcal{S}|+|\mathcal{S}_j|\le2k$. Similarly, $\mathcal{S}_1$ and $\mathcal{S}_j$ are disjoint and $|\mathcal{S}_1\cup\mathcal{S}_j|\le2k$. Therefore, Lemma~\ref{lem:restricted-orthogonality} with $s=2k$ gives
	\begin{equation}
		\label{eq:two-cross-bounds}
		|\langle \bm{A}\bm{h}_{\mathcal{S}},
		\bm{A}\bm{h}_{\mathcal{S}_j}\rangle|
		\le
		\delta_{2k}\|\bm{h}_{\mathcal{S}}\|_2\|\bm{h}_{\mathcal{S}_j}\|_2,
		\quad
		|\langle \bm{A}\bm{h}_{\mathcal{S}_1},
		\bm{A}\bm{h}_{\mathcal{S}_j}\rangle|
		\le
		\delta_{2k}\|\bm{h}_{\mathcal{S}_1}\|_2\|\bm{h}_{\mathcal{S}_j}\|_2 .
	\end{equation}
	It follows from $a+b\le\sqrt{2(a^2+b^2)}$ for $a,b\ge0$ that
	\begin{equation*}
		\|\bm{h}_{\mathcal{S}}\|_2+\|\bm{h}_{\mathcal{S}_1}\|_2
		\le
		\sqrt{2}
		\left(
		\|\bm{h}_{\mathcal{S}}\|_2^2+\|\bm{h}_{\mathcal{S}_1}\|_2^2
		\right)^{1/2}.
	\end{equation*}
	Since $\mathcal{S}$ and $\mathcal{S}_1$ are disjoint, one has $\|\bm{h}_{\mathcal{S}}\|_2^2+\|\bm{h}_{\mathcal{S}_1}\|_2^2=\|\bm{h}_{\mathcal{T}_{01}}\|_2^2$. Combining \eqref{eq:split-cross-term} and \eqref{eq:two-cross-bounds}, we obtain
	\begin{equation}
		\label{eq:sqrt2-trick}
		|\langle \bm{A}\bm{h}_{\mathcal{T}_{01}},
		\bm{A}\bm{h}_{\mathcal{S}_j}\rangle|
		\le
		\delta_{2k}
		\left(
		\|\bm{h}_{\mathcal{S}}\|_2+\|\bm{h}_{\mathcal{S}_1}\|_2
		\right)
		\|\bm{h}_{\mathcal{S}_j}\|_2
		\le
		\sqrt{2}\delta_{2k}
		\|\bm{h}_{\mathcal{T}_{01}}\|_2
		\|\bm{h}_{\mathcal{S}_j}\|_2 .
	\end{equation}
	Summing \eqref{eq:sqrt2-trick} over $j=2,\ldots,J$ yields
	\begin{equation}
		\label{eq:cross-sum-bound}
		\sum_{j=2}^{J}
		|\langle \bm{A}\bm{h}_{\mathcal{T}_{01}},
		\bm{A}\bm{h}_{\mathcal{S}_j}\rangle|
		\le
		\sqrt{2}\delta_{2k}
		\|\bm{h}_{\mathcal{T}_{01}}\|_2
		\sum_{j=2}^{J}\|\bm{h}_{\mathcal{S}_j}\|_2 .
	\end{equation}
	Substituting \eqref{eq:first-term-bound} and \eqref{eq:cross-sum-bound} into \eqref{eq:main-abs} gives
	\begin{equation}
		\label{eq:before-dividing}
		(1-\delta_{2k})\|\bm{h}_{\mathcal{T}_{01}}\|_2^2
		\le
		2\varepsilon\sqrt{1+\delta_{2k}}\|\bm{h}_{\mathcal{T}_{01}}\|_2
		+
		\sqrt{2}\delta_{2k}
		\|\bm{h}_{\mathcal{T}_{01}}\|_2
		\sum_{j=2}^{J}\|\bm{h}_{\mathcal{S}_j}\|_2 .
	\end{equation}
	If $\|\bm{h}_{\mathcal{T}_{01}}\|_2=0$, then Lemma~\ref{lem:local-global} and $\theta<1$ imply $\|\bm{h}\|_2=0$, and the desired result follows immediately. Hence we may assume that $\|\bm{h}_{\mathcal{T}_{01}}\|_2>0$. Dividing \eqref{eq:before-dividing} by $\|\bm{h}_{\mathcal{T}_{01}}\|_2$ gives
	\begin{equation}
		\label{eq:after-dividing}
		(1-\delta_{2k})\|\bm{h}_{\mathcal{T}_{01}}\|_2
		\le
		2\varepsilon\sqrt{1+\delta_{2k}}
		+
		\sqrt{2}\delta_{2k}
		\sum_{j=2}^{J}\|\bm{h}_{\mathcal{S}_j}\|_2 .
	\end{equation}
	It follows from the upper bound \eqref{eq:tail-bound-T} that
	\begin{equation}
		\label{eq:tail-bound-used}
		\sum_{j=2}^{J}\|\bm{h}_{\mathcal{S}_j}\|_2
		\le
		\|\bm{h}_{\mathcal{T}_{01}}\|_2+\theta\|\bm{h}\|_2 .
	\end{equation}
	Combining \eqref{eq:after-dividing} and \eqref{eq:tail-bound-used}, we get
	\begin{equation}
		\label{eq:pre-global}
		\left(1-(1+\sqrt{2})\delta_{2k}\right)
		\|\bm{h}_{\mathcal{T}_{01}}\|_2
		\le
		2\varepsilon\sqrt{1+\delta_{2k}}
		+
		\sqrt{2}\delta_{2k}\theta\|\bm{h}\|_2 .
	\end{equation}
	Condition \eqref{eq:delta-2k-condition} also implies $\delta_{2k}<1/(1+\sqrt{2})$. Indeed, since $\theta\ge0$, we have
	\begin{equation*}
		\label{eq:delta-smaller-basic}
		\frac{1-\theta}{(1+\sqrt{2})+(\sqrt{2}-1)\theta}
		\le
		\frac{1}{1+\sqrt{2}} .
	\end{equation*}
	Thus $1-(1+\sqrt{2})\delta_{2k}>0$. Applying Lemma~\ref{lem:local-global} to the left-hand side of \eqref{eq:pre-global} gives
	\begin{equation}
		\label{eq:coefficient-form}
		\left[
		\frac{
			\left(1-(1+\sqrt{2})\delta_{2k}\right)(1-\theta)
		}{2}
		-
		\sqrt{2}\delta_{2k}\theta
		\right]
		\|\bm{h}\|_2
		\le
		2\varepsilon\sqrt{1+\delta_{2k}} .
	\end{equation}
	
	It remains to simplify the coefficient in \eqref{eq:coefficient-form}. With $\theta=\eta/\sqrt{k}$, we have
	\begin{equation*}\label{eq:coefficient-simplification}
		\frac{\left(1-(1+\sqrt{2})\delta_{2k}\right)(1-\theta)}{2}-\sqrt{2}\delta_{2k}\theta =\frac{(1-\theta)-\delta_{2k}\left[1+\sqrt{2}+(\sqrt{2}-1)\theta\right]}{2}.
	\end{equation*}
	Therefore, \eqref{eq:coefficient-form} becomes
	\begin{equation}
		\label{eq:almost-final-bound}
		\frac{
			\left(1-\frac{\eta}{\sqrt{k}}\right)
			-
			\delta_{2k}
			\left[
			1+\sqrt{2}
			+
			(\sqrt{2}-1)\frac{\eta}{\sqrt{k}}
			\right]
		}{2}
		\|\bm{h}\|_2
		\le
		2\varepsilon\sqrt{1+\delta_{2k}} .
	\end{equation}
	By \eqref{eq:delta-2k-condition}, the coefficient in \eqref{eq:almost-final-bound} is positive. Dividing by this positive coefficient yields
	\begin{equation*}
		\|\bm{h}\|_2
		\le
		\frac{
			4\sqrt{1+\delta_{2k}}
		}{
			\left(1-\frac{\eta}{\sqrt{k}}\right)
			-
			\delta_{2k}
			\left[
			1+\sqrt{2}
			+
			(\sqrt{2}-1)\frac{\eta}{\sqrt{k}}
			\right]
		}
		\varepsilon .
	\end{equation*}
	Since $\bm{h}=\bm{\hat{x}}-\bm{x}^*$, this proves \eqref{eq:stable-error-bound}.
\end{proof}

\begin{remark}[]\label{rem:noiseless}
	In the noiseless case, namely $\varepsilon=0$, the $\ell_2$-stable recovery bound \eqref{eq:stable-error-bound} reduces to $\|\bm{\hat{x}}-\bm{x}^*\|_2=0$. Hence $\bm{\hat{x}}=\bm{x}^*$. Therefore, under the RIP condition \eqref{eq:delta-2k-condition}, NC-WDSN model $\mathcal{P}_{\eta}^{\varepsilon}$ guarantees exact recovery in the absence of measurement noise.
\end{remark}

\begin{remark}[]\label{rem:l1-reduction}
	When $\eta=0$, NC-WDSN model $\mathcal{P}_{\eta}^{\varepsilon}$ reduces to $\min_{\bm{x}\in\mathbb{R}^n}\left\{\|\bm{x}\|_1^2:\|\bm{A}\bm{x}-\bm{b}\|_2\le \varepsilon\right\}$.	Since $t\mapsto t^2$ is strictly increasing on $[0,\infty)$, this problem has the same set of global minimizers as $\min_{\bm{x}\in\mathbb{R}^n}\left\{\|\bm{x}\|_1:\|\bm{A}\bm{x}-\bm{b}\|_2\le \varepsilon\right\}$. Moreover, the RIP condition \eqref{eq:delta-2k-condition} becomes $\delta_{2k}<\frac{1}{1+\sqrt{2}}=\sqrt{2}-1$. Thus the $\eta=0$ case is consistent with the classical Cand\`es-type sufficient RIP condition for stable recovery by constrained $\ell_1$ minimization.
\end{remark}

\section{Stable Recovery for General Signals}
\label{sec:stable-recovery-general-delta-2k}

We continue to consider the NC-WDSN model $\mathcal{P}_{\eta}^{\varepsilon}$ under noisy measurements 
$\bm{b}=\bm{A}\bm{x}^*+\bm{e}$ with $\|\bm{e}\|_2\le \varepsilon$. 
Throughout this section, the target signal $\bm{x}^*\in\mathbb{R}^n$ is arbitrary and is not assumed to be exactly $k$-sparse. The sufficient $\delta_{2k}$-RIP condition remains the same as in the exactly sparse case, while the error bound gains an additional term measuring the best $k$-term approximation error of $\bm{x}^*$. 

To make this approximation term precise, we first fix a best $k$-term support of $\bm{x}^*$ and record the corresponding $\ell_1$ tail error. This support will be used throughout the block decomposition and RIP estimates below.

\begin{definition}[]\label{def:best-k-term-support}
	Let $\bm{x}^*\in\mathbb{R}^n$ be an arbitrary signal, and let $1\le k\le n$. A set $\mathcal{S}\subset\{1,\ldots,n\}$ is called a best $k$-term index set of $\bm{x}^*$ if
	$
		\mathcal{S}\in
		\argmax_{\substack{\mathcal{T}\subset\{1,\ldots,n\},\;|\mathcal{T}|=k}}
		\|\bm{x}^*_{\mathcal{T}}\|_1 .
	$
	The best $k$-term approximation error of $\bm{x}^*$ in the $\ell_1$ norm is defined by
	\begin{equation*}\label{eq:sigma-k}
		\sigma_k(\bm{x}^*)_1
		:=
		\inf_{\|\bm{z}\|_0\le k}
		\|\bm{x}^*-\bm{z}\|_1 .
	\end{equation*}
	For any best $k$-term index set $\mathcal{S}$ of $\bm{x}^*$, we have $\sigma_k(\bm{x}^*)_1=\|\bm{x}^*_{\mathcal{S}^c}\|_1$.
\end{definition}

In contrast to the exactly $k$-sparse case, $\bm{x}^*$ may have nonzero entries on $\mathcal{S}^c$. Consequently, the recovery bound contains an additional term depending on $\|\bm{x}^*_{\mathcal{S}^c}\|_1$. The following lemma makes this observation precise.

\begin{lemma}[]
	\label{lem:generalized-modified-cone}
	Let $\bm{x}^*\in\mathbb{R}^n$ be arbitrary, and let $\mathcal{S}$ be a best $k$-term index set of $\bm{x}^*$ in the sense of Definition~\ref{def:best-k-term-support}. 
	Assume that $\bm{b}=\bm{A}\bm{x}^*+\bm{e}$ with $\|\bm{e}\|_2\le \varepsilon$. 
	Let $\bm{\hat{x}}$ be a global minimizer of $\mathcal{P}_{\eta}^{\varepsilon}$ and set $\bm{h}:=\bm{\hat{x}}-\bm{x}^*$. 
	Then
	\begin{equation}
		\label{eq:generalized-cone}
		\|\bm{h}_{\mathcal{S}^c}\|_1
		\le
		\|\bm{h}_{\mathcal{S}}\|_1
		+
		\eta\|\bm{h}\|_2
		+
		2\sigma_k(\bm{x}^*)_1.
	\end{equation}
\end{lemma}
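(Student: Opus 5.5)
The plan follows the proof of Lemma~\ref{lem:objective-induced-cone}, with one change: $\bm{x}^*$ may now be nonzero on $\mathcal{S}^c$. First, $\bm{x}^*$ is feasible for $\mathcal{P}_{\eta}^{\varepsilon}$ because $\|\bm{A}\bm{x}^*-\bm{b}\|_2=\|\bm{e}\|_2\le\varepsilon$. Since $\bm{\hat{x}}$ is a global minimizer, the same optimality comparison as in Lemma~\ref{lem:objective-induced-cone} gives
\[
\|\bm{\hat{x}}\|_1^2-\|\bm{x}^*\|_1^2\le\eta\bigl(\|\bm{\hat{x}}\|_2^2-\|\bm{x}^*\|_2^2\bigr).
\]
I will extract from this the scalar bound
\[
\|\bm{\hat{x}}\|_1-\|\bm{x}^*\|_1\le\eta\|\bm{h}\|_2 .
\]
This step never used the support assumption in Lemma~\ref{lem:objective-induced-cone}, so it carries over verbatim. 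If $\|\bm{\hat{x}}\|_1\le\|\bm{x}^*\|_1$, the bound is trivial. Otherwise $\eta>0$ and $\|\bm{\hat{x}}\|_2>\|\bm{x}^*\|_2$. Factor both sides as differences of squares, bound $\|\bm{\hat{x}}\|_2-\|\bm{x}^*\|_2\le\|\bm{h}\|_2$, bound $\|\bm{\hat{x}}\|_2+\|\bm{x}^*\|_2\le\|\bm{\hat{x}}\|_1+\|\bm{x}^*\|_1$, and divide by the positive quantity $\|\bm{\hat{x}}\|_1+\|\bm{x}^*\|_1$.

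Second, I need a lower bound on $\|\bm{\hat{x}}\|_1-\|\bm{x}^*\|_1$ that keeps track of the tail $\bm{x}^*_{\mathcal{S}^c}$. Splitting over $\mathcal{S}$ and $\mathcal{S}^c$ and using the triangle inequality on each part gives
\[
\|\bm{\hat{x}}\|_1=\|\bm{x}^*_{\mathcal{S}}+\bm{h}_{\mathcal{S}}\|_1+\|\bm{x}^*_{\mathcal{S}^c}+\bm{h}_{\mathcal{S}^c}\|_1\ge\|\bm{x}^*_{\mathcal{S}}\|_1-\|\bm{h}_{\mathcal{S}}\|_1+\|\bm{h}_{\mathcal{S}^c}\|_1-\|\bm{x}^*_{\mathcal{S}^c}\|_1 .
\]
Next use $\|\bm{x}^*\|_1=\|\bm{x}^*_{\mathcal{S}}\|_1+\|\bm{x}^*_{\mathcal{S}^c}\|_1$. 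Subtracting $\|\bm{x}^*\|_1$ from both sides then yields
\[
\|\bm{\hat{x}}\|_1-\|\bm{x}^*\|_1\ge\|\bm{h}_{\mathcal{S}^c}\|_1-\|\bm{h}_{\mathcal{S}}\|_1-2\|\bm{x}^*_{\mathcal{S}^c}\|_1 .
\]

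Combining this with the upper bound from the first step, and using $\|\bm{x}^*_{\mathcal{S}^c}\|_1=\sigma_k(\bm{x}^*)_1$ from Definition~\ref{def:best-k-term-support}, gives \eqref{eq:generalized-cone}. The only delicate point is the first step's case split. The factor-and-divide argument works without any sparsity of $\bm{x}^*$, which is why the support assumption is not needed there. The choice of $\mathcal{S}$ as a best $k$-term set matters only in the final identification of $\|\bm{x}^*_{\mathcal{S}^c}\|_1$ with $\sigma_k(\bm{x}^*)_1$; every other step holds for arbitrary $\mathcal{S}$.
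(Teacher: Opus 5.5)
Your proposal is correct and follows essentially the same route as the paper. You derive $\|\bm{\hat{x}}\|_1-\|\bm{x}^*\|_1\le\eta\|\bm{h}\|_2$ from global optimality, then use the same $\mathcal{S}/\mathcal{S}^c$ triangle-inequality split that produces the $-2\|\bm{x}^*_{\mathcal{S}^c}\|_1$ term, and finally identify $\|\bm{x}^*_{\mathcal{S}^c}\|_1=\sigma_k(\bm{x}^*)_1$. The only difference is inessential: the paper bounds $\|\bm{\hat{x}}\|_2^2-\|\bm{x}^*\|_2^2$ through $\bigl|\|\bm{\hat{x}}\|_2-\|\bm{x}^*\|_2\bigr|$ directly instead of first deducing $\|\bm{\hat{x}}\|_2>\|\bm{x}^*\|_2$.
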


\begin{proof}
	Since $\bm{b}=\bm{A}\bm{x}^*+\bm{e}$ and $\|\bm{e}\|_2\le \varepsilon$, one has $\|\bm{A}\bm{x}^*-\bm{b}\|_2=\|\bm{e}\|_2\le \varepsilon$. Hence $\bm{x}^*$ is feasible for $\mathcal{P}_{\eta}^{\varepsilon}$. By the global minimality of $\bm{\hat{x}}$, one has
	\begin{equation*}
		\label{eq:compressible-global-optimality}
		\|\bm{\hat{x}}\|_1^2-\eta\|\bm{\hat{x}}\|_2^2
		\le
		\|\bm{x}^*\|_1^2-\eta\|\bm{x}^*\|_2^2 .
	\end{equation*}
	It follows that
	\begin{equation}
		\label{eq:compressible-l1-l2-square}
		\|\bm{\hat{x}}\|_1^2-\|\bm{x}^*\|_1^2
		\le
		\eta\bigl(\|\bm{\hat{x}}\|_2^2-\|\bm{x}^*\|_2^2\bigr).
	\end{equation}
	If $\|\bm{\hat{x}}\|_1\le \|\bm{x}^*\|_1$, then $\|\bm{\hat{x}}\|_1-\|\bm{x}^*\|_1 \le \eta\|\bm{h}\|_2 $ is immediate. It remains to treat the case $\|\bm{\hat{x}}\|_1-\|\bm{x}^*\|_1>0$. Dividing \eqref{eq:compressible-l1-l2-square} by $\|\bm{\hat{x}}\|_1+\|\bm{x}^*\|_1$ gives
	\begin{align}
		\|\bm{\hat{x}}\|_1-\|\bm{x}^*\|_1 &\le \eta \frac{ \|\bm{\hat{x}}\|_2^2-\|\bm{x}^*\|_2^2 }{ \|\bm{\hat{x}}\|_1+\|\bm{x}^*\|_1 } \le \eta \frac{ \bigl|\|\bm{\hat{x}}\|_2-\|\bm{x}^*\|_2\bigr| \bigl(\|\bm{\hat{x}}\|_2+\|\bm{x}^*\|_2\bigr) }{ \|\bm{\hat{x}}\|_1+\|\bm{x}^*\|_1 } \notag\\
		&\le \eta \|\bm{h}\|_2 \frac{ \|\bm{\hat{x}}\|_2+\|\bm{x}^*\|_2 }{ \|\bm{\hat{x}}\|_1+\|\bm{x}^*\|_1 } \le \eta\|\bm{h}\|_2 , \label{eq:compressible-l1-difference-upper-proof}
	\end{align}
	where we used the reverse triangle inequality and the norm relation $\|\bm{z}\|_2\le \|\bm{z}\|_1$. Since $\bm{\hat{x}}=\bm{x}^*+\bm{h}$, the decomposition with respect to $\mathcal{S}$ gives
	\begin{align}
		\|\bm{\hat{x}}\|_1-\|\bm{x}^*\|_1 &= \|\bm{x}^*_{\mathcal{S}}+\bm{h}_{\mathcal{S}}\|_1 + \|\bm{x}^*_{\mathcal{S}^c}+\bm{h}_{\mathcal{S}^c}\|_1 - \|\bm{x}^*_{\mathcal{S}}\|_1 - \|\bm{x}^*_{\mathcal{S}^c}\|_1 \notag\\
		&\ge \|\bm{x}^*_{\mathcal{S}}\|_1-\|\bm{h}_{\mathcal{S}}\|_1 + \|\bm{h}_{\mathcal{S}^c}\|_1-\|\bm{x}^*_{\mathcal{S}^c}\|_1 - \|\bm{x}^*_{\mathcal{S}}\|_1 - \|\bm{x}^*_{\mathcal{S}^c}\|_1 \notag\\
		&= \|\bm{h}_{\mathcal{S}^c}\|_1 - \|\bm{h}_{\mathcal{S}}\|_1 - 2\|\bm{x}^*_{\mathcal{S}^c}\|_1 . \label{eq:compressible-l1-difference-lower}
	\end{align}
	Combining $\|\bm{\hat{x}}\|_1-\|\bm{x}^*\|_1 \le \eta\|\bm{h}\|_2$ and \eqref{eq:compressible-l1-difference-lower} yields $\|\bm{h}_{\mathcal{S}^c}\|_1-\|\bm{h}_{\mathcal{S}}\|_1-2\|\bm{x}^*_{\mathcal{S}^c}\|_1 \le \eta\|\bm{h}\|_2$. It follows that
	\begin{equation*}
		\|\bm{h}_{\mathcal{S}^c}\|_1 \le \|\bm{h}_{\mathcal{S}}\|_1 + \eta\|\bm{h}\|_2 + 2\|\bm{x}^*_{\mathcal{S}^c}\|_1 .
	\end{equation*}
	Since $\mathcal{S}$ is a best $k$-term index set of $\bm{x}^*$, Definition~\ref{def:best-k-term-support} gives $\|\bm{x}^*_{\mathcal{S}^c}\|_1=\sigma_k(\bm{x}^*)_1$, and hence \eqref{eq:generalized-cone} follows, which completes proof.
\end{proof}

The next lemma uses the previous bound on $\|\bm{h}_{\mathcal{S}^c}\|_1$ to control the tail part of $\bm{h}$ in the $\ell_2$ norm. This is the form needed for applying the RIP.

\begin{lemma}[]\label{lem:general-tail-estimate}
	Let $\bm{x}^*\in\mathbb{R}^n$ be arbitrary, and let $\mathcal{S}$ be a best $k$-term index set of $\bm{x}^*$ in the sense of Definition~\ref{def:best-k-term-support}. Let $\bm{\hat{x}}$ be a global minimizer of $\mathcal{P}_{\eta}^{\varepsilon}$, and set $\bm{h}:=\bm{\hat{x}}-\bm{x}^*$. Arrange the entries of $\bm{h}_{\mathcal{S}^c}$ in nonincreasing order of magnitude, and let $\mathcal{S}_1,\mathcal{S}_2,\ldots$ be the corresponding disjoint blocks, each of cardinality $k$ except possibly the last one. Define $\mathcal{T}_{01}:=\mathcal{S}\cup\mathcal{S}_1$. Then
	\begin{equation*}
		\sum_{j\ge 2}\|\bm{h}_{\mathcal{S}_j}\|_2 \le \|\bm{h}_{\mathcal{T}_{01}}\|_2 + \frac{\eta}{\sqrt{k}}\|\bm{h}\|_2 + \frac{2}{\sqrt{k}}\sigma_k(\bm{x}^*)_1 .
	\end{equation*}
\end{lemma}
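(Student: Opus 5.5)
The plan is to repeat the argument of Lemma~\ref{lem:tail-estimate}, replacing the cone inequality of Lemma~\ref{lem:objective-induced-cone} by its generalized form from Lemma~\ref{lem:generalized-modified-cone}.

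\emph{Step 1: per-block bound.} If there is at most one block, the sum over $j\ge2$ is empty and there is nothing to prove. Otherwise, take $j\ge 2$. By the magnitude ordering, every entry of $\bm{h}_{\mathcal{S}_j}$ is bounded in absolute value by the average $\frac1k\|\bm{h}_{\mathcal{S}_{j-1}}\|_1$. Since $|\mathcal{S}_j|\le k$, this gives $\|\bm{h}_{\mathcal{S}_j}\|_2\le \frac{1}{\sqrt{k}}\|\bm{h}_{\mathcal{S}_{j-1}}\|_1$, exactly as in \eqref{eq:block-l2-bound}.

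\emph{Step 2: sum over blocks.} Summing over $j\ge2$ and using that the blocks $\mathcal{S}_1,\mathcal{S}_2,\ldots$ are disjoint subsets of $\mathcal{S}^c$, we get $\sum_{j\ge2}\|\bm{h}_{\mathcal{S}_j}\|_2\le \frac{1}{\sqrt k}\|\bm{h}_{\mathcal{S}^c}\|_1$.

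\emph{Step 3: the only change from the sparse case.} The assumptions of Lemma~\ref{lem:generalized-modified-cone} are exactly those stated here: $\mathcal{S}$ is a best $k$-term index set, $\bm{\hat{x}}$ is a global minimizer of $\mathcal{P}_{\eta}^{\varepsilon}$, and $\bm{x}^*$ is feasible. That lemma therefore gives $\|\bm{h}_{\mathcal{S}^c}\|_1\le \|\bm{h}_{\mathcal{S}}\|_1+\eta\|\bm{h}\|_2+2\sigma_k(\bm{x}^*)_1$. Substituting into Step 2 yields
\begin{equation*}
\sum_{j\ge2}\|\bm{h}_{\mathcal{S}_j}\|_2\le \frac{1}{\sqrt k}\|\bm{h}_{\mathcal{S}}\|_1+\frac{\eta}{\sqrt k}\|\bm{h}\|_2+\frac{2}{\sqrt k}\sigma_k(\bm{x}^*)_1 .
\end{equation*}

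\emph{Step 4: conclusion.} Since $|\mathcal{S}|=k$, Cauchy--Schwarz gives $\|\bm{h}_{\mathcal{S}}\|_1\le\sqrt{k}\|\bm{h}_{\mathcal{S}}\|_2$. Since $\mathcal{S}\subseteq\mathcal{T}_{01}$, we have $\|\bm{h}_{\mathcal{S}}\|_2\le\|\bm{h}_{\mathcal{T}_{01}}\|_2$. Inserting both bounds into Step 3 gives the claimed inequality.

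There is no real obstacle. The one point to check is that the ordering and partition are taken with respect to $\bm{h}_{\mathcal{S}^c}$ and not with respect to $\bm{x}^*$, so that the per-block bound of Step 1 still holds even though $\bm{x}^*_{\mathcal{S}^c}$ may be nonzero. Everything depending on $\bm{x}^*$ is absorbed into the $\sigma_k(\bm{x}^*)_1$ term through Lemma~\ref{lem:generalized-modified-cone}.
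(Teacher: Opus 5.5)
Your proposal is correct and matches the paper's proof step for step. Both bound each block through the magnitude ordering to get $\|\bm{h}_{\mathcal{S}_j}\|_2\le\frac{1}{\sqrt{k}}\|\bm{h}_{\mathcal{S}_{j-1}}\|_1$, sum to $\frac{1}{\sqrt{k}}\|\bm{h}_{\mathcal{S}^c}\|_1$, substitute the generalized cone inequality of Lemma~\ref{lem:generalized-modified-cone}, and finish with Cauchy--Schwarz on $\mathcal{S}$ and the inclusion $\mathcal{S}\subseteq\mathcal{T}_{01}$.
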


\begin{proof}
	By the ordering of the blocks, for any $j\ge2$ with $\mathcal{S}_j\neq\varnothing$, one has
	\begin{equation*}
		|h_i|\le |h_\ell|, \quad i\in\mathcal{S}_j,\ \ell\in\mathcal{S}_{j-1}.
	\end{equation*}
	It follows from $|\mathcal{S}_{j-1}|=k$ that $\|\bm{h}_{\mathcal{S}_{j-1}}\|_1 \ge k\|\bm{h}_{\mathcal{S}_j}\|_\infty$. Hence $\|\bm{h}_{\mathcal{S}_j}\|_{\infty} \le \frac{1}{k}\|\bm{h}_{\mathcal{S}_{j-1}}\|_1$ for $j\ge 2$. Consequently, for every $j\ge2$, we have
	\begin{equation}\label{eq:block-l2-l1-step}
		\|\bm{h}_{\mathcal{S}_j}\|_2 \le \sqrt{|\mathcal{S}_j|}\, \|\bm{h}_{\mathcal{S}_j}\|_{\infty} \le \sqrt{k}\, \frac{1}{k}\|\bm{h}_{\mathcal{S}_{j-1}}\|_1 = \frac{1}{\sqrt{k}}\|\bm{h}_{\mathcal{S}_{j-1}}\|_1 .
	\end{equation}
	Summing \eqref{eq:block-l2-l1-step} over $j\ge2$ yields
	\begin{equation}\label{eq:tail-first-general}
		\sum_{j\ge 2}\|\bm{h}_{\mathcal{S}_j}\|_2 \le \frac{1}{\sqrt{k}} \sum_{j\ge 2}\|\bm{h}_{\mathcal{S}_{j-1}}\|_1 \le \frac{1}{\sqrt{k}}\|\bm{h}_{\mathcal{S}^c}\|_1 .
	\end{equation}
	By Lemma~\ref{lem:generalized-modified-cone}, one has
	\begin{equation}\label{eq:cone-used-tail}
		\|\bm{h}_{\mathcal{S}^c}\|_1 \le \|\bm{h}_{\mathcal{S}}\|_1 + \eta\|\bm{h}\|_2 + 2\sigma_k(\bm{x}^*)_1 .
	\end{equation}
	Combining \eqref{eq:tail-first-general} and \eqref{eq:cone-used-tail}, we obtain
	\begin{equation}
		\label{eq:tail-before-cs}
		\sum_{j\ge 2}\|\bm{h}_{\mathcal{S}_j}\|_2 \le \frac{1}{\sqrt{k}}\|\bm{h}_{\mathcal{S}}\|_1 + \frac{\eta}{\sqrt{k}}\|\bm{h}\|_2 + \frac{2}{\sqrt{k}}\sigma_k(\bm{x}^*)_1 .
	\end{equation}
	Since $|\mathcal{S}|=k$, the Cauchy--Schwarz inequality gives $\frac{1}{\sqrt{k}}\|\bm{h}_{\mathcal{S}}\|_1 \le \|\bm{h}_{\mathcal{S}}\|_2$. Moreover, $\mathcal{S}\subseteq\mathcal{T}_{01}$, and therefore $\|\bm{h}_{\mathcal{S}}\|_2\le \|\bm{h}_{\mathcal{T}_{01}}\|_2$. Substituting this into \eqref{eq:tail-before-cs} gives
	\begin{equation*}
		\sum_{j\ge 2}\|\bm{h}_{\mathcal{S}_j}\|_2 \le \|\bm{h}_{\mathcal{T}_{01}}\|_2 + \frac{\eta}{\sqrt{k}}\|\bm{h}\|_2 + \frac{2}{\sqrt{k}}\sigma_k(\bm{x}^*)_1 ,
	\end{equation*}
	which proves the assertion.
\end{proof}

The tail estimate also shows that, to bound the whole error $\bm{h}$, it is enough to control the main block $\bm{h}_{\mathcal{T}_{01}}$ and the approximation error term.

\begin{lemma}[]\label{lem:general-local-global}
	Let $\bm{x}^*\in\mathbb{R}^n$ be arbitrary, and let $\mathcal{S}$ be a best $k$-term index set of $\bm{x}^*$ in the sense of Definition~\ref{def:best-k-term-support}. 
	Let $\bm{\hat{x}}$ be a global minimizer of $\mathcal{P}_{\eta}^{\varepsilon}$, and set $\bm{h}:=\bm{\hat{x}}-\bm{x}^*$. 
	Let $\mathcal{S}_1,\mathcal{S}_2,\ldots$ be the block decomposition of $\mathcal{S}^c$ induced by the nonincreasing rearrangement of the magnitudes of $\bm{h}_{\mathcal{S}^c}$, as in Lemma~\ref{lem:general-tail-estimate}, and define $ \mathcal{T}_{01}:=\mathcal{S}\cup\mathcal{S}_1$. For $0\le \eta\le 1$, one has
	\begin{equation}\label{eq:general-local-global}
		\|\bm{h}_{\mathcal{T}_{01}}\|_2 \ge \frac{1-\eta/\sqrt{k}}{2}\|\bm{h}\|_2 - \frac{1}{\sqrt{k}}\sigma_k(\bm{x}^*)_1 .
	\end{equation}
\end{lemma}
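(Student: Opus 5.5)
The plan is to copy the proof of Lemma~\ref{lem:local-global} almost word for word, replacing the tail bound \eqref{eq:tail-bound-T} with its general-signal version, Lemma~\ref{lem:general-tail-estimate}. The only new ingredient is the approximation term $\frac{2}{\sqrt{k}}\sigma_k(\bm{x}^*)_1$, which passes through linearly.

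First, I would write $\bm{h}$ as a disjoint sum. Since $\mathcal{T}_{01},\mathcal{S}_2,\mathcal{S}_3,\ldots$ partition $\{1,\ldots,n\}$, we have $\bm{h}=\bm{h}_{\mathcal{T}_{01}}+\sum_{j\ge2}\bm{h}_{\mathcal{S}_j}$. The triangle inequality then gives
\begin{equation*}
\|\bm{h}\|_2\le \|\bm{h}_{\mathcal{T}_{01}}\|_2+\sum_{j\ge2}\|\bm{h}_{\mathcal{S}_j}\|_2 .
\end{equation*}

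Next, I would bound the sum by Lemma~\ref{lem:general-tail-estimate}, whose hypotheses (best $k$-term set $\mathcal{S}$, global minimizer $\bm{\hat{x}}$, same block decomposition) are exactly those of the present statement. This yields
\begin{equation*}
\|\bm{h}\|_2\le 2\|\bm{h}_{\mathcal{T}_{01}}\|_2+\frac{\eta}{\sqrt{k}}\|\bm{h}\|_2+\frac{2}{\sqrt{k}}\sigma_k(\bm{x}^*)_1 .
\end{equation*}
Moving the $\eta$-term to the left gives
\begin{equation*}
\left(1-\frac{\eta}{\sqrt{k}}\right)\|\bm{h}\|_2-\frac{2}{\sqrt{k}}\sigma_k(\bm{x}^*)_1\le 2\|\bm{h}_{\mathcal{T}_{01}}\|_2 .
\end{equation*}
Dividing by $2$ gives \eqref{eq:general-local-global}.

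No step is genuinely hard. The one point worth checking is that the rearrangement is valid without any sign assumption: $\|\bm{h}\|_2$ is finite, so subtracting $\frac{\eta}{\sqrt{k}}\|\bm{h}\|_2$ is always legitimate. Since $\eta\le1\le\sqrt{k}$, the coefficient $1-\eta/\sqrt{k}$ is nonnegative, as in Lemma~\ref{lem:local-global}. This nonnegativity is not needed for the inequality itself, only for its later use.

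I would also confirm the edge case where there is only one block, so that the sum over $j\ge2$ is empty. In that case Lemma~\ref{lem:general-tail-estimate} still holds trivially, and the same chain of inequalities goes through unchanged.
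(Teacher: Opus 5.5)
Your proposal is correct and matches the paper's proof: the disjoint decomposition $\bm{h}=\bm{h}_{\mathcal{T}_{01}}+\sum_{j\ge2}\bm{h}_{\mathcal{S}_j}$ with the triangle inequality, then Lemma~\ref{lem:general-tail-estimate}, then moving the $\frac{\eta}{\sqrt{k}}\|\bm{h}\|_2$ term to the left and dividing by $2$. Your side remark is also right: dividing by $2$ does not need $1-\eta/\sqrt{k}\ge 0$.
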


\begin{proof}
	By the definition of $\mathcal{T}_{01}$ and the block decomposition of $\mathcal{S}^c$, the vector $\bm{h}$ can be written as $\bm{h}=\bm{h}_{\mathcal{T}_{01}}+\sum_{j\ge2}\bm{h}_{\mathcal{S}_j}$. Consequently, the triangle inequality yields
	\begin{equation}\label{eq:local-global-triangle1}
		\|\bm{h}\|_2 \le \|\bm{h}_{\mathcal{T}_{01}}\|_2 + \sum_{j\ge2}\|\bm{h}_{\mathcal{S}_j}\|_2 .
	\end{equation}
	Applying Lemma~\ref{lem:general-tail-estimate} to the second term on the right-hand side of \eqref{eq:local-global-triangle1}, we obtain
	\begin{align}\label{eq:local-global-before-rearrange1}
		\|\bm{h}\|_2 &\le \|\bm{h}_{\mathcal{T}_{01}}\|_2 + \|\bm{h}_{\mathcal{T}_{01}}\|_2 + \frac{\eta}{\sqrt{k}}\|\bm{h}\|_2 + \frac{2}{\sqrt{k}}\sigma_k(\bm{x}^*)_1 \notag\\
		&= 2\|\bm{h}_{\mathcal{T}_{01}}\|_2 + \frac{\eta}{\sqrt{k}}\|\bm{h}\|_2 + \frac{2}{\sqrt{k}}\sigma_k(\bm{x}^*)_1 .
	\end{align}
	It follows from \eqref{eq:local-global-before-rearrange1} that
	\begin{equation}\label{eq:local-global-rearranged2}
		\left(1-\frac{\eta}{\sqrt{k}}\right)\|\bm{h}\|_2 \le 2\|\bm{h}_{\mathcal{T}_{01}}\|_2 + \frac{2}{\sqrt{k}}\sigma_k(\bm{x}^*)_1 .
	\end{equation}
	Rearranging \eqref{eq:local-global-rearranged2} and then dividing by $2$ gives
	\begin{equation*}
		\|\bm{h}_{\mathcal{T}_{01}}\|_2 \ge \frac{1-\eta/\sqrt{k}}{2}\|\bm{h}\|_2
		- \frac{1}{\sqrt{k}}\sigma_k(\bm{x}^*)_1 ,
	\end{equation*}
	which proves \eqref{eq:general-local-global}.
\end{proof}

We now give the main recovery result for general signals. The RIP condition is the same as in the exactly $k$-sparse case, while the error bound contains the additional best $k$-term approximation error.

\begin{theorem}[]\label{thm:general-stable-recovery}
	Let $\bm{x}^*\in\mathbb{R}^n$ be arbitrary, and suppose that the measurements satisfy $\bm{b}=\bm{A}\bm{x}^*+\bm{e}$ with $\|\bm{e}\|_2\le \varepsilon$. Let $\bm{\hat{x}}$ be a global minimizer of $\mathcal{P}_{\eta}^{\varepsilon}$, and set $\bm{h}:=\bm{\hat{x}}-\bm{x}^*$. 
	Let $\mathcal{S}$ be a best $k$-term index set of $\bm{x}^*$ in the sense of Definition~\ref{def:best-k-term-support}. 
	Let $\mathcal{S}_1,\mathcal{S}_2,\ldots$ be the block decomposition of $\mathcal{S}^c$ induced by the nonincreasing rearrangement of the magnitudes of $\bm{h}_{\mathcal{S}^c}$, and define $\mathcal{T}_{01}:=\mathcal{S}\cup\mathcal{S}_1$. Assume that $\bm{A}$ satisfies the RIP of order $2k$ with constant $\delta_{2k}$ and that
	\begin{equation}\label{eq:general-delta-condition}
		\delta_{2k} < \frac{1-\frac{\eta}{\sqrt{k}}}{1+\sqrt{2}+(\sqrt{2}-1)\frac{\eta}{\sqrt{k}}}.
	\end{equation}
	Then
	\begin{equation}\label{eq:general-oracle-bound}
		\|\bm{\hat{x}}-\bm{x}^*\|_2 \le C_{\varepsilon}\varepsilon + C_{\mathrm{app}} \frac{\sigma_k(\bm{x}^*)_1}{\sqrt{k}},
	\end{equation}
	where
	\begin{equation}\label{eq:general-C}
		C_{\varepsilon} := \frac{4\sqrt{1+\delta_{2k}}}{ \left(1-\frac{\eta}{\sqrt{k}}\right) - \delta_{2k} \left[ 1+\sqrt{2} + (\sqrt{2}-1)\frac{\eta}{\sqrt{k}} \right]},
	\end{equation}
	and
	\begin{equation}\label{eq:general-C-app}
		C_{\mathrm{app}} := \frac{2\left[1+(\sqrt{2}-1)\delta_{2k}\right]}{ \left(1-\frac{\eta}{\sqrt{k}}\right) - \delta_{2k} \left[ 1+\sqrt{2} + (\sqrt{2}-1)\frac{\eta}{\sqrt{k}} \right] }.
	\end{equation}
\end{theorem}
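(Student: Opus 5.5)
The plan is to repeat the proof of Theorem~\ref{thm:stable-recovery-delta-2k} essentially line by line. The only change is that the two structural lemmas for sparse signals are replaced by their general-signal versions, Lemma~\ref{lem:general-tail-estimate} and Lemma~\ref{lem:general-local-global}. The constant $C_{\mathrm{app}}$ then comes from tracking the extra $\sigma_k(\bm{x}^*)_1/\sqrt{k}$ terms.

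Set $\theta:=\eta/\sqrt{k}$. As before, condition \eqref{eq:general-delta-condition} forces $\theta<1$ and $1-(1+\sqrt{2})\delta_{2k}>0$. Both $\bm{\hat{x}}$ and $\bm{x}^*$ are feasible, so $\|\bm{A}\bm{h}\|_2\le 2\varepsilon$.

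\textbf{The RIP step does not use sparsity.} The expansion of $\|\bm{A}\bm{h}_{\mathcal{T}_{01}}\|_2^2$, the Cauchy--Schwarz bound on $\langle \bm{A}\bm{h}_{\mathcal{T}_{01}},\bm{A}\bm{h}\rangle$, and the cross-term bound via Lemma~\ref{lem:restricted-orthogonality} with the $\sqrt{2}$ splitting $\bm{h}_{\mathcal{T}_{01}}=\bm{h}_{\mathcal{S}}+\bm{h}_{\mathcal{S}_1}$ use only $|\mathcal{S}|= k$, the block structure, and disjointness. They therefore give
\[
(1-\delta_{2k})\|\bm{h}_{\mathcal{T}_{01}}\|_2\le 2\varepsilon\sqrt{1+\delta_{2k}}+\sqrt{2}\delta_{2k}\sum_{j\ge2}\|\bm{h}_{\mathcal{S}_j}\|_2 .
\]
This holds after dividing by $\|\bm{h}_{\mathcal{T}_{01}}\|_2$. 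If $\|\bm{h}_{\mathcal{T}_{01}}\|_2=0$, the inequality holds trivially, so no case split is needed. Writing $s:=\sigma_k(\bm{x}^*)_1/\sqrt{k}$ and inserting Lemma~\ref{lem:general-tail-estimate} yields
\[
\bigl(1-(1+\sqrt{2})\delta_{2k}\bigr)\|\bm{h}_{\mathcal{T}_{01}}\|_2\le 2\varepsilon\sqrt{1+\delta_{2k}}+\sqrt{2}\delta_{2k}\theta\|\bm{h}\|_2+2\sqrt{2}\delta_{2k}s .
\]

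\textbf{Passing to $\|\bm{h}\|_2$.} The left coefficient is positive, so I apply Lemma~\ref{lem:general-local-global}, $\|\bm{h}_{\mathcal{T}_{01}}\|_2\ge \frac{1-\theta}{2}\|\bm{h}\|_2-s$. This gives
\[
\Bigl[\tfrac{(1-(1+\sqrt{2})\delta_{2k})(1-\theta)}{2}-\sqrt{2}\delta_{2k}\theta\Bigr]\|\bm{h}\|_2\le 2\sqrt{1+\delta_{2k}}\,\varepsilon+\bigl[1-(1+\sqrt{2})\delta_{2k}+2\sqrt{2}\delta_{2k}\bigr]s .
\]
The bracket on the left simplifies exactly as in the sparse case to $D/2$, where $D:=(1-\theta)-\delta_{2k}[1+\sqrt{2}+(\sqrt{2}-1)\theta]$. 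The coefficient of $s$ is $1+(\sqrt{2}-1)\delta_{2k}$. Since $D>0$ by \eqref{eq:general-delta-condition}, multiplying through by $2/D$ gives $C_\varepsilon$ as in \eqref{eq:general-C} and $C_{\mathrm{app}}=2[1+(\sqrt{2}-1)\delta_{2k}]/D$, matching \eqref{eq:general-C-app}.

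The main obstacle is bookkeeping rather than any new idea. The $\sigma_k$ contributions enter from two places with opposite roles: through the tail bound, multiplied by $\sqrt{2}\delta_{2k}$, and through the local-global lemma, multiplied by the positive factor $1-(1+\sqrt{2})\delta_{2k}$. I must use this positivity when substituting the lower bound for $\|\bm{h}_{\mathcal{T}_{01}}\|_2$, and then check that the two pieces combine to exactly $1+(\sqrt{2}-1)\delta_{2k}$.
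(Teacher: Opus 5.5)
Your proposal is correct and follows essentially the same route as the paper. Both arguments use the same RIP expansion with the $\sqrt{2}$ splitting of $\bm{h}_{\mathcal{T}_{01}}$, then Lemma~\ref{lem:general-tail-estimate} and Lemma~\ref{lem:general-local-global} with the positive factor $1-(1+\sqrt{2})\delta_{2k}$, and the same bookkeeping down to $D/2$ and $1+(\sqrt{2}-1)\delta_{2k}$. The one small difference is in the case $\|\bm{h}_{\mathcal{T}_{01}}\|_2=0$: the paper treats it separately by checking $C_{\mathrm{app}}\ge 2/(1-\eta/\sqrt{k})$, whereas you note that the divided inequality holds trivially there, which is valid and slightly cleaner.
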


\begin{proof}
	Since both $\bm{x}^*$ and $\bm{\hat{x}}$ are feasible for $\mathcal{P}_{\eta}^{\varepsilon}$, the measurement residual satisfies
	\begin{equation}
		\label{eq:general-Ah-bound}
		\|\bm{A}\bm{h}\|_2
		=
		\|\bm{A}\bm{\hat{x}}-\bm{A}\bm{x}^*\|_2
		\le
		\|\bm{A}\bm{\hat{x}}-\bm{b}\|_2
		+
		\|\bm{A}\bm{x}^*-\bm{b}\|_2
		\le
		2\varepsilon .
	\end{equation}
	For notational brevity, write $\delta:=\delta_{2k}$ and $\theta:=\eta/\sqrt{k}$. It follows from $0\le \eta\le1$ that $0\le \theta\le1$. The RIP condition \eqref{eq:general-delta-condition} implies
	\begin{equation}
		\label{eq:general-positive-denominator}
		D:=
		1-\theta-\delta\left[1+\sqrt{2}+(\sqrt{2}-1)\theta\right]>0.
	\end{equation}
	Since $|\mathcal{T}_{01}|\le2k$, the RIP lower bound and the decomposition $\bm{h}=\bm{h}_{\mathcal{T}_{01}}+\sum_{j\ge2}\bm{h}_{\mathcal{S}_j}$ give
	\begin{align}
		(1-\delta)\|\bm{h}_{\mathcal{T}_{01}}\|_2^2 &\le \|\bm{A}\bm{h}_{\mathcal{T}_{01}}\|_2^2 = \left\langle \bm{A}\bm{h}_{\mathcal{T}_{01}}, \bm{A}\bm{h} \right\rangle - \sum_{j\ge2} \left\langle \bm{A}\bm{h}_{\mathcal{T}_{01}}, \bm{A}\bm{h}_{\mathcal{S}_j} \right\rangle \notag\\
		&\le \left| \left\langle \bm{A}\bm{h}_{\mathcal{T}_{01}}, \bm{A}\bm{h} \right\rangle \right| + \sum_{j\ge2} \left| \left\langle \bm{A}\bm{h}_{\mathcal{T}_{01}}, \bm{A}\bm{h}_{\mathcal{S}_j} \right\rangle \right|. \label{eq:general-main-inner-bound}
	\end{align}
	The first term on the right-hand side is controlled by the RIP upper bound and \eqref{eq:general-Ah-bound}, that is,
	\begin{equation}
		\label{eq:general-first-inner-term}
		\left|
		\left\langle
		\bm{A}\bm{h}_{\mathcal{T}_{01}},
		\bm{A}\bm{h}
		\right\rangle
		\right|
		\le
		\|\bm{A}\bm{h}_{\mathcal{T}_{01}}\|_2
		\|\bm{A}\bm{h}\|_2
		\le
		2\varepsilon\sqrt{1+\delta}\,
		\|\bm{h}_{\mathcal{T}_{01}}\|_2 .
	\end{equation}
	It remains to bound the cross terms. Since $\bm{h}_{\mathcal{T}_{01}}=\bm{h}_{\mathcal{S}}+\bm{h}_{\mathcal{S}_1}$, for each $j\ge2$, one has
	\begin{align}
		\left|
		\left\langle
		\bm{A}\bm{h}_{\mathcal{T}_{01}},
		\bm{A}\bm{h}_{\mathcal{S}_j}
		\right\rangle
		\right|
		&\le
		\left|
		\left\langle
		\bm{A}\bm{h}_{\mathcal{S}},
		\bm{A}\bm{h}_{\mathcal{S}_j}
		\right\rangle
		\right|
		+
		\left|
		\left\langle
		\bm{A}\bm{h}_{\mathcal{S}_1},
		\bm{A}\bm{h}_{\mathcal{S}_j}
		\right\rangle
		\right|                                      \notag\\
		&\le
		\delta
		\left(
		\|\bm{h}_{\mathcal{S}}\|_2
		+
		\|\bm{h}_{\mathcal{S}_1}\|_2
		\right)
		\|\bm{h}_{\mathcal{S}_j}\|_2
		\le
		\sqrt{2}\delta
		\|\bm{h}_{\mathcal{T}_{01}}\|_2
		\|\bm{h}_{\mathcal{S}_j}\|_2 ,
		\label{eq:general-cross-term-bound}
	\end{align}
	where the second inequality follows from the restricted orthogonality consequence of the $\delta_{2k}$-RIP applied to the disjoint pairs $(\mathcal{S},\mathcal{S}_j)$ and $(\mathcal{S}_1,\mathcal{S}_j)$, and the last inequality uses
	\begin{equation*}
		\|\bm{h}_{\mathcal{S}}\|_2+\|\bm{h}_{\mathcal{S}_1}\|_2
		\le
		\sqrt{2}
		\left(
		\|\bm{h}_{\mathcal{S}}\|_2^2+\|\bm{h}_{\mathcal{S}_1}\|_2^2
		\right)^{1/2}
		=
		\sqrt{2}\|\bm{h}_{\mathcal{T}_{01}}\|_2 .
	\end{equation*}
	Summing \eqref{eq:general-cross-term-bound} over $j\ge2$ yields
	\begin{equation}
		\label{eq:general-cross-sum}
		\sum_{j\ge2}
		\left|
		\left\langle
		\bm{A}\bm{h}_{\mathcal{T}_{01}},
		\bm{A}\bm{h}_{\mathcal{S}_j}
		\right\rangle
		\right|
		\le
		\sqrt{2}\delta
		\|\bm{h}_{\mathcal{T}_{01}}\|_2
		\sum_{j\ge2}\|\bm{h}_{\mathcal{S}_j}\|_2 .
	\end{equation}
	Combining \eqref{eq:general-main-inner-bound}, \eqref{eq:general-first-inner-term}, and \eqref{eq:general-cross-sum}, we obtain
	\begin{equation}
		\label{eq:general-before-division}
		(1-\delta)\|\bm{h}_{\mathcal{T}_{01}}\|_2^2
		\le
		2\varepsilon\sqrt{1+\delta}\,
		\|\bm{h}_{\mathcal{T}_{01}}\|_2
		+
		\sqrt{2}\delta
		\|\bm{h}_{\mathcal{T}_{01}}\|_2
		\sum_{j\ge2}\|\bm{h}_{\mathcal{S}_j}\|_2 .
	\end{equation}
	
	If $\|\bm{h}_{\mathcal{T}_{01}}\|_2=0$, Lemma~\ref{lem:general-local-global} gives $\|\bm{h}\|_2 \le \frac{2}{1-\theta} \frac{\sigma_k(\bm{x}^*)_1}{\sqrt{k}}$.	Moreover, \eqref{eq:general-positive-denominator} implies
	\begin{equation}
		C_{\mathrm{app}} - \frac{2}{1-\theta} = \frac{2\left[1+(\sqrt{2}-1)\delta\right]}{D} - \frac{2}{1-\theta} = \frac{4\sqrt{2}\delta}{D(1-\theta)} \ge 0 .
	\end{equation}
	Thus \eqref{eq:general-oracle-bound} holds in this case.
	
	Assume henceforth that $\|\bm{h}_{\mathcal{T}_{01}}\|_2>0$. Dividing \eqref{eq:general-before-division} by $\|\bm{h}_{\mathcal{T}_{01}}\|_2$ and applying Lemma~\ref{lem:general-tail-estimate} give
	\begin{align}
		(1-\delta)\|\bm{h}_{\mathcal{T}_{01}}\|_2
		&\le
		2\varepsilon\sqrt{1+\delta}
		+
		\sqrt{2}\delta
		\sum_{j\ge2}\|\bm{h}_{\mathcal{S}_j}\|_2                                      \notag\\
		&\le
		2\varepsilon\sqrt{1+\delta}
		+
		\sqrt{2}\delta
		\left(
		\|\bm{h}_{\mathcal{T}_{01}}\|_2
		+
		\theta\|\bm{h}\|_2
		+
		2\frac{\sigma_k(\bm{x}^*)_1}{\sqrt{k}}
		\right).
		\label{eq:general-after-tail}
	\end{align}
	Consequently,
	\begin{equation}
		\label{eq:general-C1-ineq}
		\left[1-(1+\sqrt{2})\delta\right]
		\|\bm{h}_{\mathcal{T}_{01}}\|_2
		\le
		2\varepsilon\sqrt{1+\delta}
		+
		\sqrt{2}\delta\theta\|\bm{h}\|_2
		+
		2\sqrt{2}\delta
		\frac{\sigma_k(\bm{x}^*)_1}{\sqrt{k}} .
	\end{equation}
	Set $C_1:=1-(1+\sqrt{2})\delta$. Since \eqref{eq:general-delta-condition} implies $\delta<1/(1+\sqrt{2})$, one has $C_1>0$. By Lemma~\ref{lem:general-local-global},
	\begin{equation}
		\label{eq:general-local-lower-used}
		\|\bm{h}_{\mathcal{T}_{01}}\|_2
		\ge
		\frac{1-\theta}{2}\|\bm{h}\|_2
		-
		\frac{\sigma_k(\bm{x}^*)_1}{\sqrt{k}} .
	\end{equation}
	Substituting \eqref{eq:general-local-lower-used} into the left-hand side of \eqref{eq:general-C1-ineq} yields
	\begin{align}
		C_1
		\left(
		\frac{1-\theta}{2}\|\bm{h}\|_2
		-
		\frac{\sigma_k(\bm{x}^*)_1}{\sqrt{k}}
		\right)
		&\le
		2\varepsilon\sqrt{1+\delta}
		+
		\sqrt{2}\delta\theta\|\bm{h}\|_2
		+
		2\sqrt{2}\delta
		\frac{\sigma_k(\bm{x}^*)_1}{\sqrt{k}} .
	\end{align}
	After collecting the terms involving $\|\bm{h}\|_2$, we obtain
	\begin{equation}
		\label{eq:general-D-ineq}
		\left[
		C_1\frac{1-\theta}{2}
		-
		\sqrt{2}\delta\theta
		\right]\|\bm{h}\|_2
		\le
		2\varepsilon\sqrt{1+\delta}
		+
		\left(C_1+2\sqrt{2}\delta\right)
		\frac{\sigma_k(\bm{x}^*)_1}{\sqrt{k}} .
	\end{equation}
	The coefficient on the left-hand side of \eqref{eq:general-D-ineq} satisfies
	\begin{equation}\label{eq:general-denominator-half}
		C_1\frac{1-\theta}{2}-\sqrt{2}\delta\theta=\frac{1-\theta-\delta\left[1+\sqrt{2}+(\sqrt{2}-1)\theta\right]}{2}=\frac{D}{2}.
	\end{equation}
	Moreover,
	\begin{equation}
		\label{eq:general-app-coeff-simplify}
		C_1+2\sqrt{2}\delta
		=
		1-(1+\sqrt{2})\delta+2\sqrt{2}\delta
		=
		1+(\sqrt{2}-1)\delta .
	\end{equation}
	Using \eqref{eq:general-denominator-half} and \eqref{eq:general-app-coeff-simplify} in \eqref{eq:general-D-ineq}, and then multiplying by $2$, gives
	\begin{equation*}
		D\|\bm{h}\|_2
		\le
		4\varepsilon\sqrt{1+\delta}
		+
		2\left[1+(\sqrt{2}-1)\delta\right]
		\frac{\sigma_k(\bm{x}^*)_1}{\sqrt{k}} .
	\end{equation*}
	Since $D>0$, division by $D$ gives
	\begin{equation*}
		\|\bm{h}\|_2
		\le
		\frac{
			4\sqrt{1+\delta}
		}{
			D
		}\varepsilon
		+
		\frac{
			2\left[1+(\sqrt{2}-1)\delta\right]
		}{
			D
		}
		\frac{\sigma_k(\bm{x}^*)_1}{\sqrt{k}} .
	\end{equation*}
	Restoring $\delta=\delta_{2k}$ and the definition of $D$ gives exactly \eqref{eq:general-oracle-bound}--\eqref{eq:general-C-app}. Since $\bm{h}=\bm{\hat{x}}-\bm{x}^*$, the proof is complete.
\end{proof}

\begin{remark}[]\label{rem:general-l1-reduction}
	When $\eta=0$, the RIP condition becomes $\delta_{2k}<\frac{1}{1+\sqrt{2}}=\sqrt{2}-1$. Moreover, the recovery bound becomes
	\begin{equation*}
		\|\bm{\hat{x}}-\bm{x}^*\|_2
		\le
		\frac{4\sqrt{1+\delta_{2k}}}{1-(1+\sqrt{2})\delta_{2k}}\varepsilon
		+
		\frac{2\left[1+(\sqrt{2}-1)\delta_{2k}\right]}
		{1-(1+\sqrt{2})\delta_{2k}}
		\frac{\|\bm{x}^*_{\mathcal{S}^c}\|_1}{\sqrt{k}}.
	\end{equation*}
	This coincides with the classical RIP-based stable recovery estimate for constrained $\ell_1$ minimization obtained by Cand\`es~\cite{CandesRombergTao2006Stable, Candes2008RIP}.
\end{remark}

\begin{remark}[]\label{rem:general-sparse-reduction}
	If $\bm{x}^*$ is exactly $k$-sparse, then choosing $\mathcal{S}=\operatorname{supp}(\bm{x}^*)$ gives $\bm{x}^*_{\mathcal{S}^c}=0$. Therefore, $\|\bm{x}^*_{\mathcal{S}^c}\|_1=0$. Thus the approximation term vanishes, and the oracle inequality \eqref{eq:general-oracle-bound} reduces to $\|\bm{\hat{x}}-\bm{x}^*\|_2\le C_{\varepsilon}\varepsilon$,  which recovers the stable recovery estimate for $k$-sparse signals.
\end{remark}

\begin{remark}[]
	The recovery constants in Theorems \ref{thm:stable-recovery-delta-2k} and \ref{thm:general-stable-recovery} contain the denominator
	\begin{equation*}
		D_{\eta,k}
		:=
		\left(1-\frac{\eta}{\sqrt{k}}\right)
		-
		\delta_{2k}
		\left[
		1+\sqrt{2}
		+
		(\sqrt{2}-1)\frac{\eta}{\sqrt{k}}
		\right].
	\end{equation*}
	The RIP condition imposed in these theorems is precisely the positivity condition $D_{\eta,k}>0$. Hence, as $\eta/\sqrt{k}$ approaches 1, the admissible upper bound on $\delta_{2k}$ tends to zero, and the recovery constants become unbounded whenever the noise level or the approximation error is nonzero. This degeneration should not be regarded merely as a byproduct of loose estimates in the proof. It reflects a structural feature of the WDSN penalty. Indeed, for any one-sparse vector $\bm{v}$, $\|\bm{v}\|_1^2-\eta\|\bm{v}\|_2^2=(1-\eta)\|\bm{v}\|_2^2 $. Thus, when $\eta\to1$, the penalty becomes increasingly weak along one-sparse directions; in the limiting case $\eta=1$, it vanishes identically on all one-sparse vectors. Consequently, the WDSN penalty loses coercivity along these directions, and the present RIP-based recovery argument cannot yield a uniform stable recovery bound near this boundary.
	
	Under the assumption $0\le\eta\le1$, this degeneracy can occur only in the nearly one-sparse case $k=1$ with $\eta$ close to one. For $k\ge2$, one has $\eta/\sqrt{k}\le1/\sqrt{2}$, and therefore the factor $1-\eta/\sqrt{k}$ remains uniformly separated from zero.
\end{remark}

The RIP condition \eqref{eq:general-delta-condition} is identical to the condition obtained in the exactly $k$-sparse case. Removing exact sparsity does not require a stronger bound on $\delta_{2k}$. Instead, the lack of exact sparsity appears in the recovery estimate only through $\|\bm{x}^*_{\mathcal{S}^c}\|_1/\sqrt{k}$. This term is small when $\bm{x}^*$ is well approximated by a $k$-sparse vector.

\section{Proximal Operator of \texorpdfstring{$\mathcal{R}_{\eta}$}{l1 squared minus eta l2 squared}}\label{sec:prox-l1sq-l2sq}

In this section, we derive the proximal operator of the function $\mathcal{R}_{\eta}$. For a given stepsize $t>0$ and a vector $\bm{v}\in\mathbb{R}^n$, the proximal operator of $t\mathcal{R}_{\eta}$ is defined by
\begin{equation}
	\label{eq:prox-def}
	\operatorname{Prox}_{t\mathcal{R}_{\eta}}(\bm{v})
	:=
	\arg\min_{\bm{x}\in\mathbb{R}^n}
	\left\{
	\mathcal{R}_{\eta}(\bm{x})+\frac{1}{2t}\|\bm{x}-\bm{v}\|_2^2
	\right\}.
\end{equation}
Multiplying the objective function in \eqref{eq:prox-def} by the positive scalar $t$, we obtain the equivalent minimization problem
\begin{equation}
	\label{eq:prox-equivalent}
	\operatorname{Prox}_{t\mathcal{R}_{\eta}}(\bm{v})
	=
	\arg\min_{\bm{x}\in\mathbb{R}^n}
	\left\{
	t\|\bm{x}\|_1^2
	-
	t\eta\|\bm{x}\|_2^2
	+
	\frac{1}{2}\|\bm{x}-\bm{v}\|_2^2
	\right\}.
\end{equation}
Expanding the last term gives $\frac{1}{2}\|\bm{x}-\bm{v}\|_2^2=\frac{1}{2}\|\bm{x}\|_2^2-\bm{v}^\top \bm{x}+\frac{1}{2}\|\bm{v}\|_2^2$. Since the constant $\frac{1}{2}\|\bm{v}\|_2^2$ does not affect the minimizers, the problem is equivalent to
\begin{equation}
	\label{eq:prox-expanded}
	\min_{\bm{x}\in\mathbb{R}^n}
	\left\{
	t\|\bm{x}\|_1^2
	+
	\frac{1-2t\eta}{2}\|\bm{x}\|_2^2
	-
	\bm{v}^\top \bm{x}
	\right\}.
\end{equation}

We first reduce \eqref{eq:prox-expanded} to a nonnegative optimization problem.

\begin{lemma}[Reduction to the nonnegative orthant]
	\label{lem:sign-reduction}
	Let $\bm{v}\in\mathbb{R}^n$, and set $\bm{q}:=|\bm{v}|$ componentwise. 
	Let $c:=1-2t\eta$ and define $F(\bm{x}):=t\|\bm{x}\|_1^2+\frac{c}{2}\|\bm{x}\|_2^2-\bm{v}^{\top}\bm{x}$. Under the standing assumption $0\le\eta\le1$, define
	\begin{equation}
		\label{eq:G-u}
		G(\bm{u})
		:=
		t(\mathbf{1}^{\top}\bm{u})^2
		+
		\frac{c}{2}\|\bm{u}\|_2^2
		-
		\bm{q}^{\top}\bm{u},
		\qquad
		\bm{u}\in\mathbb{R}_+^n .
	\end{equation}
	Then
	\begin{equation*}
		\label{eq:argmin-sign-reduction}
		\operatorname*{arg\,min}_{\bm{x}\in\mathbb{R}^n} F(\bm{x})
		=
		\left\{
		\operatorname{sign}(\bm{v})\odot\bm{u}:
		\bm{u}\in
		\operatorname*{arg\,min}_{\bm{u}\in\mathbb{R}_+^n}G(\bm{u})
		\right\},
	\end{equation*}
	where $\operatorname{sign}(0)=0$. In particular, every global minimizer $\bar{\bm{x}}$ of $F$ satisfies $\bar{x}_i v_i\ge0$ for all $i$, and $\bar{x}_i=0$ whenever $v_i=0$.
\end{lemma}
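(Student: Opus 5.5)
The plan is a sign-flip argument: any minimizer of $F$ can be replaced by the vector with the same moduli but signs aligned with $\bm{v}$, and this replacement never increases $F$. The key observation is that the first two terms of $F$ depend only on $|\bm{x}|$: we have $\|\bm{x}\|_1=\mathbf{1}^\top|\bm{x}|$ and $\|\bm{x}\|_2=\||\bm{x}|\|_2$. Hence, for every $\bm{x}\in\mathbb{R}^n$ with $\bm{u}:=|\bm{x}|$,
\begin{equation*}
F(\bm{x})=t(\mathbf{1}^\top\bm{u})^2+\frac{c}{2}\|\bm{u}\|_2^2-\bm{v}^\top\bm{x}\ \ge\ G(\bm{u}),
\end{equation*}
because $\bm{v}^\top\bm{x}\le\sum_i|v_i||x_i|=\bm{q}^\top\bm{u}$. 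Conversely, for any $\bm{u}\in\mathbb{R}^n_+$ the vector $\bm{x}=\operatorname{sign}(\bm{v})\odot\bm{u}$ satisfies $\bm{v}^\top\bm{x}=\bm{q}^\top\bm{u}$. However, its modulus is $|\bm{x}|=\bm{u}$ only on $\{i: v_i\neq0\}$; on $\{v_i=0\}$ the sign map sets the entry to zero.

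\textbf{Step 1: the case $v_i=0$.} First I show that every minimizer $\bm{u}$ of $G$ over $\mathbb{R}_+^n$ has $u_i=0$ whenever $q_i=0$. If $u_i>0$ for such an $i$, set $u_i$ to $0$ to get $\bm{u}'$. The linear term is unchanged, since $q_i=0$. The quantity $(\mathbf{1}^\top\bm{u})^2$ strictly decreases, and $t>0$. For the middle term, $c=1-2t\eta$ might be negative, so I would argue with the combined expression
\begin{equation*}
t(\mathbf{1}^\top\bm{u})^2+\frac{c}{2}\|\bm{u}\|_2^2=\Bigl(t-\frac{t\eta}{1}\Bigr)\|\bm{u}\|_2^2+2t\sum_{i<j}u_iu_j+\frac12\|\bm{u}\|_2^2,
\end{equation*}
which uses $\|\bm{u}\|_1^2=\|\bm{u}\|_2^2+2\sum_{i<j}u_iu_j$ together with $c/2=\tfrac12-t\eta$. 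Since $\eta\le1$, every coefficient is nonnegative, and the $\tfrac12\|\bm{u}\|_2^2$ term is strictly positive in $u_i$. So each term is coordinatewise nondecreasing in every $u_j\ge0$, with strict decrease when a positive $u_i$ is zeroed. This monotonicity is the place where the standing assumption $0\le\eta\le1$ enters, and I expect it to be the main (mild) obstacle. It follows that $\bm{u}'$ gives a strictly smaller value, a contradiction.

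\textbf{Step 2: equality of the infima.} By Step 1 (or applied directly to near-minimizers), $\inf_{\mathbb{R}^n}F=\inf_{\mathbb{R}^n_+}G$. Indeed, $F\ge G\circ|\cdot|$ everywhere. Conversely, for any $\bm{u}\in\mathbb{R}^n_+$, zeroing the coordinates where $q_i=0$ (which does not increase $G$, by the monotonicity above) and then applying $\operatorname{sign}(\bm{v})\odot\cdot$ produces an $\bm{x}$ with $F(\bm{x})\le G(\bm{u})$.

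\textbf{Step 3: identifying the argmins.} For "$\supseteq$": if $\bm{u}$ minimizes $G$, then $u_i=0$ when $v_i=0$ by Step 1. So $\bm{x}=\operatorname{sign}(\bm{v})\odot\bm{u}$ has $|\bm{x}|=\bm{u}$ and $F(\bm{x})=G(\bm{u})=\min F$. For "$\subseteq$": let $\bar{\bm{x}}$ minimize $F$ and set $\bm{u}=|\bar{\bm{x}}|$. The chain $\min G\le G(\bm{u})\le F(\bar{\bm{x}})=\min G$ forces $\bm{u}\in\arg\min G$ and equality in $\bm{v}^\top\bar{\bm{x}}\le\bm{q}^\top\bm{u}$, that is, $v_i\bar{x}_i=|v_i||\bar{x}_i|$ for all $i$. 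Step 1 gives $\bar{x}_i=0$ whenever $v_i=0$. Where $v_i\neq0$, the equality gives $\bar{x}_i=\operatorname{sign}(v_i)u_i$. Therefore $\bar{\bm{x}}=\operatorname{sign}(\bm{v})\odot\bm{u}$, which also yields the final claims $\bar{x}_iv_i\ge0$ and $\bar{x}_i=0$ when $v_i=0$.

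Existence of minimizers is not needed for the set equality, since both sides are handled symmetrically through the common infimum. Coercivity of $G$, from the nonnegative quadratic form in Step 1 plus $\tfrac12\|\bm{u}\|_2^2$, would give existence anyway.
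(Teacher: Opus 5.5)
Your proof is correct and follows essentially the same route as the paper: the bound $F(\bm{x})\ge G(|\bm{x}|)$, the coordinate-zeroing computation $2tar+\bigl(\tfrac12+t(1-\eta)\bigr)r^2>0$ (your expansion of $t\|\bm{u}\|_1^2+\tfrac{c}{2}\|\bm{u}\|_2^2$ is the same algebra), and the lift via $\operatorname{sign}(\bm{v})\odot\bm{u}$. The only difference is that you derive sign alignment and vanishing on $\{v_i=0\}$ for minimizers of $F$ from the equality case of the infimum chain together with Step 1 applied to $G$; the paper instead proves these by explicit sign-flip and zeroing perturbations of $\bar{\bm{x}}$, and it first establishes existence of a minimizer of $G$ by coercivity.
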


\begin{proof}
	We first note that $G$ attains its minimum on $\mathbb{R}_+^n$. For every $\bm{u}\in\mathbb{R}_+^n$, we have $\mathbf{1}^{\top}\bm{u}=\|\bm{u}\|_1$ and $\|\bm{u}\|_1^2\ge\|\bm{u}\|_2^2$. Hence
	\begin{equation*}
		t(\mathbf{1}^{\top}\bm{u})^2+\frac{c}{2}\|\bm{u}\|_2^2
		=
		t\|\bm{u}\|_1^2+\frac{1-2t\eta}{2}\|\bm{u}\|_2^2
		\ge
		\left(\frac{1}{2}+t(1-\eta)\right)\|\bm{u}\|_2^2 .
	\end{equation*}
	Under $0\le\eta\le1$, the coefficient $\frac{1}{2}+t(1-\eta)$ is positive. Since $-\bm{q}^{\top}\bm{u}\ge-\|\bm{q}\|_2\|\bm{u}\|_2$, it follows that $G$ is coercive on the closed set $\mathbb{R}_+^n$. By continuity, $G$ has a global minimizer over $\mathbb{R}_+^n$.
	
	For any $\bm{x}\in\mathbb{R}^n$, set $\bm{u}:=|\bm{x}|$. Then
	$\|\bm{x}\|_1=\mathbf{1}^{\top}\bm{u}$ and $\|\bm{x}\|_2=\|\bm{u}\|_2$. Moreover, $\bm{v}^{\top}\bm{x} \le \sum_{i=1}^n |v_i||x_i|= \bm{q}^{\top}\bm{u}$. It follows that $F(\bm{x}) \ge G(|\bm{x}|)$. Let $\bar{\bm{u}}$ be a global minimizer of $G$ over $\mathbb{R}_+^n$. 
	If $v_i=0$ and $\bar u_i>0$ for some index $i$, let $\tilde{\bm{u}}$ be obtained from $\bar{\bm{u}}$ by setting the $i$th component to zero and leaving all other components unchanged. Put $r:=\bar u_i$ and $a:=\mathbf{1}^{\top}\tilde{\bm{u}}$. Since $q_i=0$, we have
	\begin{equation*}
		G(\bar{\bm{u}})-G(\tilde{\bm{u}})=t\left[(a+r)^2-a^2\right]+\frac{c}{2}r^2=2tar+\left(t+\frac{c}{2}\right)r^2=2tar+\left(\frac{1}{2}+t(1-\eta)\right)r^2>0,
	\end{equation*}
	where we used $0\le\eta\le1$. This contradicts the global minimality of $\bar{\bm{u}}$. Therefore, $\bar u_i=0$ whenever $v_i=0$.
	
	Let $\bar{\bm{x}}$ be a global minimizer of $F$. Suppose that $\bar{x}_i v_i<0$ for some index $i$. Let $\tilde{\bm{x}}$ be obtained from $\bar{\bm{x}}$ by replacing $\bar{x}_i$ with $-\bar{x}_i$ and leaving all other components unchanged. Since this operation does not change the componentwise magnitudes, one has $\|\tilde{\bm{x}}\|_1=\|\bar{\bm{x}}\|_1$ and $\|\tilde{\bm{x}}\|_2=\|\bar{\bm{x}}\|_2$. Thus the only change in the objective value comes from the linear term. More precisely, $-\bm{v}^{\top}\tilde{\bm{x}}-\left(-\bm{v}^{\top}\bar{\bm{x}}\right)=2v_i\bar{x}_i<0$. It follows that $F(\tilde{\bm{x}})<F(\bar{\bm{x}})$, contradicting the global minimality of $\bar{\bm{x}}$. Therefore, $\bar{x}_i v_i\ge0$ for every $i$.
	
	If $v_i=0$ and $\bar x_i\ne0$, let $\tilde{\bm{x}}$ be obtained from $\bar{\bm{x}}$ by setting the $i$th component to zero and keeping all other components unchanged. Put $r:=|\bar x_i|$ and $a:=\|\tilde{\bm{x}}\|_1$. Since the linear term is unchanged, the same computation gives
	\begin{equation*}
		F(\bar{\bm{x}})-F(\tilde{\bm{x}})=t\left[(a+r)^2-a^2\right]+\frac{c}{2}r^2=2tar+\left(\frac{1}{2}+t(1-\eta)\right)r^2>0,
	\end{equation*}
	again contradicting the global minimality of $\bar{\bm{x}}$. Thus $\bar x_i=0$ whenever $v_i=0$. Together with the sign alignment, this gives $\bar{\bm{x}}=\operatorname{sign}(\bm{v})\odot|\bar{\bm{x}}|$.
	
	By $F(\bm{x}) \ge G(|\bm{x}|)$, if $|\bar{\bm{x}}|$ were not a global minimizer of $G$ over $\mathbb{R}_+^n$, then there would exist $\bm{u}\in\mathbb{R}_+^n$ such that $G(\bm{u})<G(|\bar{\bm{x}}|)$. Choosing a vector $\bm{z}$ with $|z_i|=u_i$ and $v_i z_i=|v_i|u_i$ for every $i$, we obtain $F(\bm{z})=G(\bm{u})$, and hence $F(\bm{z})=G(\bm{u})<G(|\bar{\bm{x}}|)=F(\bar{\bm{x}})$, which contradicts the global minimality of $\bar{\bm{x}}$. Therefore, $|\bar{\bm{x}}|$ is a global minimizer of $G$.
	
	Conversely, let $\bar{\bm{u}}$ be a global minimizer of $G$ over $\mathbb{R}_+^n$. It follows that $\bar u_i=0$ whenever $v_i=0$. Hence $F(\operatorname{sign}(\bm{v})\odot\bar{\bm{u}})=G(\bar{\bm{u}})$. For any $\bm{x}\in\mathbb{R}^n$, $F(\bm{x}) \ge G(|\bm{x}|)$ gives $F(\bm{x})\ge G(|\bm{x}|)\ge G(\bar{\bm{u}})=F(\operatorname{sign}(\bm{v})\odot\bar{\bm{u}})$. Thus $\operatorname{sign}(\bm{v})\odot\bar{\bm{u}}$ is a global minimizer of $F$. This proves the claimed reduction.
\end{proof}

After the sign reduction in Lemma~\ref{lem:sign-reduction}, the computation of $\operatorname{Prox}_{t\mathcal{R}_{\eta}}(\bm{v})$ reduces to minimizing $G$ over $\mathbb{R}_+^n$. The curvature parameter $c=1-2t\eta$ determines the structure of this reduced problem: $c>0$ gives a strictly convex problem, $c=0$ gives a degenerate convex problem, and $c<0$ gives a nonconvex problem. We analyze these three cases separately.

We first consider the case $c>0$, equivalently $\eta<1/(2t)$. In this case, the function $G$ is strictly convex. Indeed, $\nabla^2 G(\bm{u})=2t\mathbf{1}\mathbf{1}^\top+c\bm{I}$, and hence, for every nonzero $\bm{z}\in\mathbb{R}^n$, $\bm{z}^\top \nabla^2 G(\bm{u})\bm{z}=2t(\mathbf{1}^\top \bm{z})^2+c\|\bm{z}\|_2^2>0$. Thus $G$ is strictly convex on $\mathbb{R}_+^n$, so the reduced problem has at most one minimizer. Since $G$ is also coercive when $c>0$, this minimizer exists and is unique. We now characterize it by the KKT conditions.

\begin{lemma}[KKT characterization in the case $c>0$]
	\label{lem:kkt-c-positive}
	Assume that $c>0$. A vector $\bm{u}^*\in\mathbb{R}_+^n$ is the unique global minimizer of \eqref{eq:G-u} if and only if, with $S^*:=\mathbf{1}^{\top}\bm{u}^*$, it satisfies
	\begin{equation}
		\label{eq:u-soft-threshold-form}
		u_i^* = \max\left\{0, \frac{q_i-2tS^*}{c}\right\}, \qquad i=1,\ldots,n.
	\end{equation}
	Consequently, at the minimizer, $S^*$ satisfies the scalar fixed-point equation
	\begin{equation}\label{eq:S-fixed-point}
		S^* = \sum_{i=1}^n \max\left\{0, \frac{q_i-2tS^*}{c}\right\}.
	\end{equation}
\end{lemma}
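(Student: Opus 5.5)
Since $c>0$, the discussion preceding the statement shows that $G$ is strictly convex and coercive on $\mathbb{R}_+^n$, so a unique global minimizer exists. Because the constraint set $\mathbb{R}_+^n$ is polyhedral, linear constraint qualification holds. The KKT conditions are therefore both necessary and sufficient for global optimality. The plan is to write these conditions explicitly and solve them componentwise.

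\textbf{Step 1: KKT system.} We have $\nabla G(\bm{u}) = 2t(\mathbf{1}^\top\bm{u})\mathbf{1} + c\,\bm{u} - \bm{q}$. A point $\bm{u}^*\in\mathbb{R}_+^n$ minimizes $G$ over $\mathbb{R}_+^n$ if and only if there is $\bm{\mu}\ge 0$ with $\nabla G(\bm{u}^*) = \bm{\mu}$ and $\mu_i u_i^*=0$. Equivalently, with $S^*:=\mathbf{1}^\top\bm{u}^*$, for each $i$:
\begin{itemize}
\item if $u_i^*>0$, then $2tS^* + c u_i^* - q_i = 0$;
\item if $u_i^*=0$, then $2tS^* - q_i \ge 0$.
\end{itemize}
Sufficiency of these conditions follows from convexity: for any feasible $\bm{u}$,
\[
G(\bm{u}) \ge G(\bm{u}^*) + \nabla G(\bm{u}^*)^\top(\bm{u}-\bm{u}^*) = G(\bm{u}^*) + \bm{\mu}^\top\bm{u} \ge G(\bm{u}^*).
\]
Necessity is the standard first-order condition over the orthant: $\partial_i G(\bm{u}^*)=0$ if $u_i^*>0$ and $\partial_i G(\bm{u}^*)\ge0$ if $u_i^*=0$. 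This is shown by perturbing a single coordinate.

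\textbf{Step 2: equivalence with the soft-threshold form.} Fix $S^*$ and treat each $i$ separately.
\begin{itemize}
\item If $u_i^*>0$, the KKT equation gives $u_i^*=(q_i-2tS^*)/c$, which is positive and hence equals $\max\{0,(q_i-2tS^*)/c\}$.
\item If $u_i^*=0$, the KKT inequality gives $q_i-2tS^*\le 0$, so $\max\{0,(q_i-2tS^*)/c\}=0=u_i^*$. Here $c>0$ is used to preserve the sign of the division.
\end{itemize}
Conversely, suppose $\bm{u}^*\ge0$ satisfies \eqref{eq:u-soft-threshold-form} with $S^*=\mathbf{1}^\top\bm{u}^*$.
\begin{itemize}
\item If $q_i-2tS^*>0$, then $u_i^*>0$ and the stationarity equation holds.
\item Otherwise $u_i^*=0$ and $2tS^*-q_i\ge0$.
\end{itemize}
So the KKT system holds, and by Step 1 and uniqueness, $\bm{u}^*$ is the unique global minimizer.

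\textbf{Step 3: fixed-point equation.} Summing \eqref{eq:u-soft-threshold-form} over $i$ and using $S^*=\mathbf{1}^\top\bm{u}^*$ gives \eqref{eq:S-fixed-point} directly.

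\textbf{Main obstacle.} The one subtle point is that $S^*$ appears on both sides of \eqref{eq:u-soft-threshold-form}, so the characterization is implicit. The proof must keep $S^*$ defined as $\mathbf{1}^\top\bm{u}^*$ for the same vector $\bm{u}^*$, rather than as an arbitrary parameter. Once this is done, the componentwise case analysis closes.

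Existence of a solution to \eqref{eq:S-fixed-point} also follows independently. The right-hand side of \eqref{eq:S-fixed-point} is continuous and nonincreasing in $S$, while the left-hand side $S$ is strictly increasing, so the solution is unique. The lemma itself, however, only requires that the minimizer's $S^*$ satisfies the equation, and that is immediate from Step 3.
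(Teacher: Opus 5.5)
Your proposal is correct and follows essentially the same route as the paper: write the KKT system for $G$ over $\mathbb{R}_+^n$ with $\nabla G(\bm{u})=2t(\mathbf{1}^{\top}\bm{u})\mathbf{1}+c\bm{u}-\bm{q}$, resolve it componentwise into the soft-threshold form and back, sum over $i$ to get the fixed-point equation, and invoke strict convexity for uniqueness. One small slip in your optional closing remark: the monotonicity comparison proves uniqueness, not existence, of a root of the scalar equation for $S^*$ (existence needs something like an intermediate-value argument, or follows from the existence of the minimizer), but the lemma does not depend on this remark.
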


\begin{proof}
	Since $G$ is differentiable and convex on the closed convex set $\mathbb{R}_+^n$, the KKT conditions are necessary and sufficient for global optimality. Its gradient is $\nabla G(\bm{u}) = 2t(\mathbf{1}^{\top}\bm{u})\mathbf{1} + c\bm{u} - \bm{q}$. Therefore, a vector $\bm{u}^*\in\mathbb{R}_+^n$ is a global minimizer if and only if the KKT conditions for the constraint $\bm{u}\ge0$ hold:
	\begin{equation*}
		\bm{u}^*\ge0, \quad \nabla G(\bm{u}^*)\ge0, \quad u_i^*[\nabla G(\bm{u}^*)]_i=0, \quad i=1,\ldots,n.
	\end{equation*}
	Since $S^*=\mathbf{1}^{\top}\bm{u}^*$, the $i$th component of the gradient is $[\nabla G(\bm{u}^*)]_i = 2tS^* + cu_i^* - q_i$. Hence, for each $i=1,\ldots,n$, we have $u_i^*\ge0, \quad 2tS^*+cu_i^*-q_i\ge0$ and $u_i^*(2tS^*+cu_i^*-q_i)=0$. If $u_i^*>0$, the complementarity condition gives $2tS^*+cu_i^*-q_i=0$, and hence $u_i^* = \frac{q_i-2tS^*}{c}$. In this case, necessarily $q_i>2tS^*$. If $u_i^*=0$, the dual feasibility condition gives $2tS^*-q_i\ge0$, that is, $q_i\le2tS^*$. Combining the two alternatives yields \eqref{eq:u-soft-threshold-form}. Conversely, if the displayed identity holds with $S^*=\mathbf{1}^{\top}\bm{u}^*$, then $\bm{u}^*\ge0$. Moreover, if $q_i>2tS^*$, then $u_i^*=(q_i-2tS^*)/c$ and $2tS^*+cu_i^*-q_i=0$; if $q_i\le2tS^*$, then $u_i^*=0$ and $2tS^*+cu_i^*-q_i=2tS^*-q_i\ge0$. Thus the KKT conditions hold. Summing this identity over $i=1,\ldots,n$ and using $S^*=\mathbf{1}^{\top}\bm{u}^*$ gives \eqref{eq:S-fixed-point}. Since $G$ is strictly convex when $c>0$, any global minimizer is unique. This proves the lemma.
\end{proof}

By Lemma~\ref{lem:kkt-c-positive}, the minimizer is determined once the scalar $S^*$, or equivalently the threshold $2tS^*$, is known. Since the active indices are those for which $q_i>2tS^*$, we sort the entries of $\bm{q}$ in nonincreasing order. Let
\begin{equation}
	q_{(1)}\ge q_{(2)}\ge \cdots \ge q_{(n)}\ge 0
\end{equation}
be such a rearrangement, and define
\begin{equation}
	Q_k:=\sum_{i=1}^k q_{(i)}, \qquad k=1,\ldots,n.
\end{equation}
The following theorem gives an explicit construction of $S^*$ from these sorted entries.

\begin{theorem}[]\label{thm:prox-c-positive}
	Assume that $c=1-2t\eta>0$. If $\bm{v}=\bm{0}$, then $\operatorname{Prox}_{t\mathcal{R}_{\eta}}(\bm{0})=\{\bm{0}\}$. If $\bm{v}\ne\bm{0}$, define
	\begin{equation}
		\label{eq:k-star-c-positive}
		k^* := \max\left\{k\in\{1,\ldots,n\}: q_{(k)} > \frac{2tQ_k}{c+2tk}\right\}.
	\end{equation}
	Set $S^* := \frac{Q_{k^*}}{c+2tk^*}$. Then $\operatorname{Prox}_{t\mathcal{R}_{\eta}}(\bm{v})$ is given by
	\begin{equation}\label{eq:prox-c-positive-final}
		x_i^* = \operatorname{sign}(v_i) \max\left\{0, \frac{|v_i|-2tS^*}{c}\right\}, \quad i=1,\ldots,n.
	\end{equation}
\end{theorem}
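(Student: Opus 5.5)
The plan is to reduce everything to Lemmas~\ref{lem:sign-reduction} and \ref{lem:kkt-c-positive}, so that only the scalar fixed-point equation \eqref{eq:S-fixed-point} has to be solved.

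\textbf{The case $\bm{v}=\bm{0}$.} Here $\bm{q}=\bm{0}$, so $G(\bm{u})=t(\mathbf{1}^\top\bm{u})^2+\frac{c}{2}\|\bm{u}\|_2^2$ on $\mathbb{R}_+^n$. This is strictly positive unless $\bm{u}=\bm{0}$, hence its unique minimizer is $\bm{0}$, and the sign reduction gives $\operatorname{Prox}_{t\mathcal{R}_\eta}(\bm{0})=\{\bm{0}\}$.

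\textbf{The case $\bm{v}\neq\bm{0}$.} It suffices to show that $S^*=Q_{k^*}/(c+2tk^*)$ solves \eqref{eq:S-fixed-point}. Then the vector $\bm{u}^*$ defined by \eqref{eq:u-soft-threshold-form} satisfies $\mathbf{1}^\top\bm{u}^*=S^*$ and the KKT conditions, so it is the unique minimizer of $G$. Lemma~\ref{lem:sign-reduction} then converts it into \eqref{eq:prox-c-positive-final}; recall that $q_i=|v_i|$, and that $\operatorname{sign}(v_i)=0$ is harmless when $v_i=0$.

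\textbf{Monotonicity of the fixed-point equation.} Define $\phi(S):=S-\sum_i\max\{0,(q_i-2tS)/c\}$ on $[0,\infty)$. This function is continuous and strictly increasing, since each summand is nonincreasing in $S$. We have $\phi(0)=-Q_n/c<0$ because $\bm{v}\ne\bm{0}$, and $\phi(S)\to\infty$. So the fixed point is unique and positive, and it only remains to identify it.

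\textbf{Locating the active set.} If the active set is exactly the top $k$ indices, then $S=(Q_k-2tkS)/c$, i.e.\ $S=Q_k/(c+2tk)=:S_k$. Consistency of this guess requires $q_{(k)}>2tS_k\ge q_{(k+1)}$, with the convention $q_{(n+1)}:=0$. So the key step is the following combinatorial claim about
\[
\psi(k):=q_{(k)}(c+2tk)-2tQ_k .
\]
First, $\psi(1)=cq_{(1)}>0$, so the set in \eqref{eq:k-star-c-positive} is nonempty. Second, the admissible $k$ form an initial segment. A direct computation gives $\psi(k+1)-\psi(k)=(q_{(k+1)}-q_{(k)})(c+2tk)\le0$, so $\psi$ is nonincreasing and $\psi(k)>0$ exactly for $k\le k^*$.

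\textbf{Verifying the consistency conditions at $k^*$.} The condition $\psi(k^*)>0$ is precisely $q_{(k^*)}>2tS^*$. If $k^*<n$, then $\psi(k^*+1)\le0$ reads $q_{(k^*+1)}(c+2t(k^*+1))\le2t(Q_{k^*}+q_{(k^*+1)})$, which simplifies to $cq_{(k^*+1)}\le 2tQ_{k^*}-2tk^*q_{(k^*+1)}$, i.e.\ $q_{(k^*+1)}(c+2tk^*)\le 2tQ_{k^*}$, i.e.\ $q_{(k^*+1)}\le 2tS^*$. If $k^*=n$, this condition is vacuous.

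\textbf{Conclusion and ties.} With these two inequalities, the summands in $\phi(S^*)$ are positive exactly for the top $k^*$ indices, so $\phi(S^*)=S^*-(Q_{k^*}-2tk^*S^*)/c=0$. Ties among the $q_i$ cause no trouble, because the formula depends only on values; any tied entries with $q_i=q_{(k^*+1)}\le 2tS^*$ are inactive. The main obstacle is this bookkeeping — keeping the monotonicity of $\psi$ and the tie handling clean — while the rest is direct substitution.
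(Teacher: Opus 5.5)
Your proposal is correct and follows essentially the same route as the paper: show that $2tS^*$ separates the top $k^*$ sorted entries from the rest (using $q_{(k^*)}>2tS^*$ and the failure of the defining inequality at $k^*+1$), verify $\mathbf{1}^\top\bm{u}^*=S^*$, and conclude via the KKT lemma and the sign reduction. Your strict monotonicity of $\phi$ and the initial-segment property of $\psi$ are correct but not needed, since uniqueness already follows from the strict convexity of $G$, and the argument uses only the maximality of $k^*$ together with the sorted order.
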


\begin{proof}
	If $\bm{v}=\bm{0}$, then $\bm{q}=\bm{0}$ and the function $G$ becomes $G(\bm{u}) = t(\mathbf{1}^{\top}\bm{u})^2 + \frac{c}{2}\|\bm{u}\|_2^2$ with $\bm{u}\in\mathbb{R}_+^n$. Since $t>0$ and $c>0$, one has $G(\bm{u})\ge0$, and equality holds if and only if $\bm{u}=\bm{0}$. Hence $\bm{0}$ is the unique minimizer of $G$ over $\mathbb{R}_+^n$. Lemma~\ref{lem:sign-reduction} therefore gives $\operatorname{Prox}_{t\mathcal{R}_{\eta}}(\bm{0}) = \{\bm{0}\}$.
	
	If $\bm{v}\ne\bm{0}$, then $q_{(1)}>0$. For $k=1$, it follows from $c>0$ that
$
		\frac{2tQ_1}{c+2t} = \frac{2tq_{(1)}}{c+2t} < q_{(1)},
$
	Therefore, the index set in \eqref{eq:k-star-c-positive} is nonempty, and $k^*$ is well defined. Let
$
		\tau^* := 2tS^* = \frac{2tQ_{k^*}}{c+2tk^*}.
$
	By the definition of $k^*$, one has $q_{(k^*)} > \tau^* $.
	It follows from the monotonicity of the sorted sequence that $q_{(i)} > \tau^*$ for $i=1,\ldots,k^* $. It remains to identify the entries beyond the first $k^*$. If $k^*=n$, then there is no such entry. Suppose that $k^*<n$. By the maximality of $k^*$, the index $k^*+1$ does not satisfy the strict inequality in \eqref{eq:k-star-c-positive}. Hence $q_{(k^*+1)} \le \frac{2tQ_{k^*+1}}{c+2t(k^*+1)}$. Since $Q_{k^*+1}=Q_{k^*}+q_{(k^*+1)}$, this implies $q_{(k^*+1)} \left(c+2t(k^*+1)\right) \le 2tQ_{k^*} + 2tq_{(k^*+1)}$, and therefore $q_{(k^*+1)}(c+2tk^*) \le 2tQ_{k^*}$. Since $c+2tk^*>0$, we obtain $q_{(k^*+1)} \le \frac{2tQ_{k^*}}{c+2tk^*} = \tau^* $. Using the monotonicity of the sorted sequence again, we obtain $q_{(i)} \le \tau^*$ for $i=k^*+1,\ldots,n$. It follows that the indices satisfying $q_i>\tau^*$ are exactly the first $k^*$ indices in the sorted order. Thus their cardinality is $k^*$, and $\sum_{i:q_i>\tau^*} q_i = Q_{k^*}$. Define $\bm{u}^*\in\mathbb{R}_+^n$ by
	\begin{equation}
		\label{eq:u-star-c-positive}
		u_i^* := \max\left\{0, \frac{q_i-\tau^*}{c}\right\}, \quad i=1,\ldots,n.
	\end{equation}
	Then
	\begin{equation*}
		\mathbf{1}^{\top}\bm{u}^* = \sum_{i:q_i>\tau^*} \frac{q_i-\tau^*}{c} = \frac{Q_{k^*}-k^*\tau^*}{c} = \frac{Q_{k^*}-2tk^*S^*}{c}.
	\end{equation*}
	Using $S^*=Q_{k^*}/(c+2tk^*)$, equivalently $Q_{k^*}=cS^*+2tk^*S^*$, we obtain $\mathbf{1}^{\top}\bm{u}^* = S^*$. Together with $\tau^*=2tS^*$, \eqref{eq:u-star-c-positive} becomes \eqref{eq:u-soft-threshold-form}, and $S^*=\mathbf{1}^{\top}\bm{u}^*$. Hence $\bm{u}^*$ satisfies the KKT condition in Lemma~\ref{lem:kkt-c-positive}. Since $G$ is strictly convex for $c>0$, $\bm{u}^*$ is the unique global minimizer of the reduced problem.
	
	Finally, Lemma~\ref{lem:sign-reduction} yields the corresponding unique minimizer of the original proximal subproblem:
	\begin{equation}
		x_i^* = \operatorname{sign}(v_i)u_i^* = \operatorname{sign}(v_i) \max\left\{0, \frac{|v_i|-2tS^*}{c}\right\}, \qquad i=1,\ldots,n.
	\end{equation}
	This proves \eqref{eq:prox-c-positive-final}.
\end{proof}

When $c=0$, the term $\frac{c}{2}\|\bm{u}\|_2^2$ vanishes from the function $G$. Thus the quadratic part of $G$ depends only on the total mass $\mathbf{1}^{\top}\bm{u}$, while the linear term determines where this mass should be placed. In particular, for a fixed value of $\mathbf{1}^{\top}\bm{u}$, the objective is minimized by assigning all mass to the indices where $q_i$ is maximal. This leads to a set-valued proximal formula.

Let $q_\infty:=\|\bm{q}\|_{\infty}=\|\bm{v}\|_{\infty}$, and define the maximal-index set $\mathcal{M}:=\{i:\ q_i=q_\infty\}$.

\begin{theorem}[]\label{thm:prox-c-zero}
	Assume that $c=1-2t\eta=0$. If $\bm{v}=\bm{0}$, then $\operatorname{Prox}_{t\mathcal{R}_{\eta}}(\bm{0}) = \{\bm{0}\}$. If $\bm{v}\ne\bm{0}$, then
	\begin{equation}
		\label{eq:prox-c-zero-final}
		\operatorname{Prox}_{t\mathcal{R}_{\eta}}(\bm{v}) = \left\{\operatorname{sign}(\bm{v})\odot\bm{u}: \bm{u}\in\mathbb{R}_+^n,\ \operatorname{supp}(\bm{u})\subseteq\mathcal{M},\ \|\bm{u}\|_1=\frac{\|\bm{v}\|_{\infty}}{2t}\right\}.
	\end{equation}
\end{theorem}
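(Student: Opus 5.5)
The plan is to use Lemma~\ref{lem:sign-reduction}, which reduces the statement to finding $\operatorname*{arg\,min}_{\bm{u}\in\mathbb{R}_+^n}G(\bm{u})$ with $c=0$, i.e.
\begin{equation*}
G(\bm{u})=t(\mathbf{1}^{\top}\bm{u})^2-\bm{q}^{\top}\bm{u}.
\end{equation*}
The case $\bm{v}=\bm{0}$ is immediate. Then $G(\bm{u})=t\|\bm{u}\|_1^2\ge0$ with equality only at $\bm{u}=\bm{0}$, so Lemma~\ref{lem:sign-reduction} gives $\{\bm{0}\}$.

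For $\bm{v}\ne\bm{0}$ I would slice $\mathbb{R}_+^n$ by total mass. Fix $s:=\mathbf{1}^{\top}\bm{u}\ge0$. Since $q_i\le q_\infty$ for every $i$, we have
\begin{equation*}
\bm{q}^{\top}\bm{u}\le q_\infty s,
\end{equation*}
and equality holds if and only if $u_i=0$ for every $i$ with $q_i<q_\infty$, that is, $\operatorname{supp}(\bm{u})\subseteq\mathcal{M}$. This gives the lower bound
\begin{equation*}
G(\bm{u})\ge \phi(s):=ts^2-q_\infty s,
\end{equation*}
with equality exactly when the support condition holds. The scalar function $\phi$ is strictly convex on $[0,\infty)$ because $t>0$. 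Its unique minimizer is $s^*=q_\infty/(2t)>0$, which is admissible since $q_\infty>0$, and the minimum value is $-q_\infty^2/(4t)$.

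Combining the two steps, $G(\bm{u})\ge\phi(\mathbf{1}^{\top}\bm{u})\ge\phi(s^*)$ for all $\bm{u}\in\mathbb{R}_+^n$. The lower bound $\phi(s^*)$ is attained, for instance by putting mass $s^*$ on a single index of $\mathcal{M}$, which is nonempty. Hence $\bm{u}$ is a minimizer if and only if both inequalities are equalities. The second is an equality if and only if $\|\bm{u}\|_1=s^*=\|\bm{v}\|_\infty/(2t)$, by the uniqueness of the minimizer of $\phi$. The first is an equality if and only if $\operatorname{supp}(\bm{u})\subseteq\mathcal{M}$. Applying Lemma~\ref{lem:sign-reduction} to map these $\bm{u}$ back via $\operatorname{sign}(\bm{v})\odot\bm{u}$ yields \eqref{eq:prox-c-zero-final}.

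The main obstacle is proving the characterization in both directions: every minimizer lies in the set, and every element of the set is a minimizer. The equality analysis above handles both. Lemma~\ref{lem:sign-reduction} needs only $0\le\eta\le1$, so it applies without change when $c=0$. One point to check is that indices with $v_i=0$ cannot lie in $\mathcal{M}$, because $q_\infty>0$. Therefore the sign map loses no information, and the zero-pattern conclusion of that lemma is consistent with the support condition.
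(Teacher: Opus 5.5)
Your proposal is correct and follows essentially the same route as the paper. Both arguments slice $\mathbb{R}_+^n$ by the total mass $S=\mathbf{1}^{\top}\bm{u}$, use $\bm{q}^{\top}\bm{u}\le q_\infty S$ with equality if and only if $\operatorname{supp}(\bm{u})\subseteq\mathcal{M}$, minimize the strictly convex scalar function $tS^2-q_\infty S$ at $S^*=q_\infty/(2t)$, show that the bound is attained at $S^*\bm{e}_j$ with $j\in\mathcal{M}$, and map back via Lemma~\ref{lem:sign-reduction}. The only cosmetic difference is that you handle $\bm{v}=\bm{0}$ directly through $G(\bm{u})=t\|\bm{u}\|_1^2$, whereas the paper treats it as the degenerate case $S^*=0$ of the same argument.
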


\begin{proof}
	When $c=0$, the function $G$ is $G(\bm{u}) = t(\mathbf{1}^{\top}\bm{u})^2 - \bm{q}^{\top}\bm{u}, \quad \bm{u}\in\mathbb{R}_+^n$. For any $\bm{u}\in\mathbb{R}_+^n$, set $S:=\mathbf{1}^{\top}\bm{u}=\|\bm{u}\|_1$. Since $q_i\le q_\infty$ and $u_i\ge0$ for all $i$, one has
	\begin{equation*}
		\bm{q}^{\top}\bm{u} = \sum_{i=1}^n q_i u_i \le q_\infty\sum_{i=1}^n u_i = q_\infty S .
	\end{equation*}
	It follows that
	\begin{equation}
		\label{eq:G-lower-c-zero}
		G(\bm{u}) \ge tS^2-q_\infty S .
	\end{equation}
	Moreover,
	\begin{equation}
		\label{eq:equality-condition-c-zero}
		q_\infty S-\bm{q}^{\top}\bm{u} = \sum_{i=1}^n (q_\infty-q_i)u_i .
	\end{equation}
	Each term on the right-hand side of \eqref{eq:equality-condition-c-zero} is nonnegative. Hence equality in $\bm{q}^{\top}\bm{u}\le q_\infty S$ holds if and only if $(q_\infty-q_i)u_i=0$ for $i=1,\ldots,n$. Equivalently, $\operatorname{supp}(\bm{u})\subseteq\mathcal{M}$, where $\mathcal{M}:=\{i:\ q_i=q_\infty\}$.
	
	Consider the scalar function $\phi(S):=tS^2-q_\infty S$, $S\ge0$. Since $t>0$, $\phi$ is strictly convex on $[0,\infty)$, and $\phi'(S)=2tS-q_\infty$. Thus its unique minimizer is $S^* = \frac{q_\infty}{2t}$. The lower bound \eqref{eq:G-lower-c-zero} is sharp: for any $S\ge0$ and any $j\in\mathcal{M}$, the vector $S\bm{e}_j$ satisfies $\mathbf{1}^{\top}(S\bm{e}_j)=S$ and $\bm{q}^{\top}(S\bm{e}_j)=q_\infty S$, and therefore $G(S\bm{e}_j)=\phi(S)$. Hence the global minimum of $G$ over $\mathbb{R}_+^n$ equals $\min_{S\ge0}\phi(S)$. It follows that every global minimizer of $G$ must satisfy both $\mathbf{1}^{\top}\bm{u}=S^*$ and equality in $\bm{q}^{\top}\bm{u}\le q_\infty\mathbf{1}^{\top}\bm{u}$. By the equality characterization above, this is equivalent to $\operatorname{supp}(\bm{u})\subseteq\mathcal{M}$ and $\|\bm{u}\|_1=S^*$.
	
	If $\bm{v}=\bm{0}$, then $\bm{q}=\bm{0}$ and $q_\infty=0$. Hence $S^*=0$, and the only vector $\bm{u}\in\mathbb{R}_+^n$ satisfying $\|\bm{u}\|_1=S^*$ is $\bm{u}=\bm{0}$. Thus $\bm{0}$ is the unique global minimizer of $G$ over $\mathbb{R}_+^n$. Lemma~\ref{lem:sign-reduction} gives $\operatorname{Prox}_{t\mathcal{R}_{\eta}}(\bm{0}) = \{\bm{0}\}$.
	
	If $\bm{v}\ne\bm{0}$, then $q_\infty>0$ and $S^*>0$. From the preceding argument, every global minimizer of $G$ over $\mathbb{R}_+^n$ must satisfy $\operatorname{supp}(\bm{u})\subseteq\mathcal{M}$ and $\|\bm{u}\|_1=q_\infty/(2t)$. Conversely, if $\bm{u}\in\mathbb{R}_+^n$ satisfies these two conditions, then $\mathbf{1}^{\top}\bm{u}=S^*$ and $\bm{q}^{\top}\bm{u}=q_\infty S^*$. Hence $G(\bm{u}) = t(S^*)^2-q_\infty S^* = \phi(S^*) = \min_{S\ge0}\phi(S)$. Together with the lower bound \eqref{eq:G-lower-c-zero}, this shows that such $\bm{u}$ is a global minimizer of $G$. Therefore,
	\begin{equation}
		\operatorname*{arg\,min}_{\bm{u}\in\mathbb{R}_+^n}G(\bm{u}) = \left\{\bm{u}\in\mathbb{R}_+^n: \operatorname{supp}(\bm{u})\subseteq\mathcal{M},\ \|\bm{u}\|_1=\frac{q_\infty}{2t}\right\}.
	\end{equation}
	Since $q_\infty=\|\bm{q}\|_\infty=\|\bm{v}\|_\infty$, Lemma~\ref{lem:sign-reduction} yields
	\begin{equation}
		\operatorname{Prox}_{t\mathcal{R}_{\eta}}(\bm{v}) = \left\{\operatorname{sign}(\bm{v})\odot\bm{u}: \bm{u}\in\mathbb{R}_+^n,\ \operatorname{supp}(\bm{u})\subseteq\mathcal{M},\ \|\bm{u}\|_1=\frac{\|\bm{v}\|_\infty}{2t}\right\}.
	\end{equation}
	This proves \eqref{eq:prox-c-zero-final}.
\end{proof}

We finally consider the case $c=1-2t\eta<0$. In contrast to the case $c>0$, the reduced objective is no longer convex in general. Nevertheless, its global minimizers can still be characterized by separating the total mass
$S:=\mathbf{1}^{\top}\bm{u}$ from the distribution of this mass among the coordinates. For a fixed $S\ge0$, define $\Delta_S:=\left\{\bm{u}\in\mathbb{R}_+^n:\mathbf{1}^{\top}\bm{u}=S\right\}$. The term $t(\mathbf{1}^{\top}\bm{u})^2$ is fixed. Since $c<0$, the term $\frac{c}{2}\|\bm{u}\|_2^2$ favors concentration of the mass, while the linear term $-\bm{q}^{\top}\bm{u}$ favors placing mass on coordinates where $q_i$ is largest. The following lemma identifies the minimizers of $G$ on each fixed-mass simplex.

\begin{lemma}[Minimization over fixed-mass simplexes for $c<0$]
	\label{lem:simplex-reduction-c-negative}
	Assume that $c<0$. For $S\ge0$, one has
	\begin{equation}
		\label{eq:simplex-min-c-negative}
		\min_{\bm{u}\in\Delta_S}G(\bm{u}) = \left(t+\frac{c}{2}\right)S^2-q_\infty S.
	\end{equation}
	Moreover,
	\begin{equation}
		\label{eq:simplex-argmin-c-negative}
		\operatorname*{arg\,min}_{\bm{u}\in\Delta_S}G(\bm{u}) = \left\{S\bm{e}_j:\ j\in\mathcal{M}\right\},
	\end{equation}
	where $\bm{e}_j$ denotes the $j$th canonical basis vector. In particular, when $S=0$, the set in \eqref{eq:simplex-argmin-c-negative} reduces to $\{\bm{0}\}$.
\end{lemma}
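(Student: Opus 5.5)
On $\Delta_S$ the term $t(\mathbf{1}^{\top}\bm{u})^2=tS^2$ is constant. Minimizing $G$ over $\Delta_S$ is therefore the same as minimizing $\frac{c}{2}\|\bm{u}\|_2^2-\bm{q}^{\top}\bm{u}$. I will bound each of the two pieces separately and then check that both bounds are attained simultaneously, and only at the vertices $S\bm{e}_j$ with $j\in\mathcal{M}$.

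The case $S=0$ is trivial, since $\Delta_0=\{\bm{0}\}$ and $G(\bm{0})=0$, which matches \eqref{eq:simplex-min-c-negative}. So assume $S>0$.

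\textbf{The linear term.} Exactly as in the proof of Theorem~\ref{thm:prox-c-zero}, we have $\bm{q}^{\top}\bm{u}\le q_\infty S$, i.e.
\[
-\bm{q}^{\top}\bm{u}\ge -q_\infty S,
\]
with equality if and only if $\operatorname{supp}(\bm{u})\subseteq\mathcal{M}$.

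\textbf{The quadratic term.} For $\bm{u}\in\mathbb{R}_+^n$,
\[
\|\bm{u}\|_2^2=\sum_i u_i^2\le \Big(\max_i u_i\Big)\sum_i u_i\le S^2 .
\]
Equality forces every $u_i>0$ to equal $\max_i u_i=S$. Hence equality holds if and only if $\bm{u}=S\bm{e}_j$ for some $j$. Since $c<0$, this upper bound turns into the lower bound
\[
\frac{c}{2}\|\bm{u}\|_2^2\ge \frac{c}{2}S^2 ,
\]
with the same equality case. This reversal of direction caused by the sign of $c$ is the only point where $c<0$ is used.

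\textbf{Combining.} Adding the two bounds gives
\[
G(\bm{u})\ge \Big(t+\frac{c}{2}\Big)S^2-q_\infty S \qquad\text{for all }\bm{u}\in\Delta_S .
\]
For $j\in\mathcal{M}$ the vertex $S\bm{e}_j$ achieves equality in both bounds simultaneously, so the infimum is attained and equals the right-hand side. This proves \eqref{eq:simplex-min-c-negative}.

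\textbf{Characterizing the minimizers.} Conversely, any minimizer must achieve equality in both inequalities. Since $S>0$, the quadratic equality gives $\bm{u}=S\bm{e}_j$ for some $j$. The linear equality then gives $\operatorname{supp}(S\bm{e}_j)=\{j\}\subseteq\mathcal{M}$, i.e. $j\in\mathcal{M}$. This yields \eqref{eq:simplex-argmin-c-negative}.

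The only delicate step is getting the equality case of $\|\bm{u}\|_2^2\le S^2$ right, so that the minimizers are exactly the vertices rather than some larger set. This in turn requires $c\neq0$ (strictly negative), which distinguishes the present case from the set-valued description in Theorem~\ref{thm:prox-c-zero}.
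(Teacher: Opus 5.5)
Your proof is correct and follows essentially the same route as the paper: on $\Delta_S$ the term $tS^2$ is fixed, you bound $\frac{c}{2}\|\bm{u}\|_2^2\ge\frac{c}{2}S^2$ (using $c<0$) and $-\bm{q}^{\top}\bm{u}\ge -q_\infty S$ separately, and you identify the minimizers as the points where both gaps vanish. The only cosmetic difference is how you get $\|\bm{u}\|_2^2\le S^2$ and its equality case: the paper uses the identity $S^2-\|\bm{u}\|_2^2=2\sum_{i<j}u_iu_j$, which also covers $S=0$ without a separate case, whereas you use $\sum_i u_i^2\le(\max_i u_i)\sum_i u_i$; both give the same equality condition.
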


\begin{proof}
	Fix $S\ge0$ and let $\bm{u}\in\Delta_S$. Then $\bm{u}\ge0$ and $\mathbf{1}^{\top}\bm{u}=S$. Since
	\begin{equation*}
		S^2-\|\bm{u}\|_2^2 = \left(\sum_{i=1}^n u_i\right)^2-\sum_{i=1}^n u_i^2 = 2\sum_{1\le i<j\le n}u_i u_j \ge0,
	\end{equation*}
	we have $\|\bm{u}\|_2^2\le S^2$. Moreover, since $q_i\le q_\infty$ and $u_i\ge0$ for all $i$,
	\begin{equation*}
		q_\infty S-\bm{q}^{\top}\bm{u} = \sum_{i=1}^n (q_\infty-q_i)u_i \ge0.
	\end{equation*}
	Using $G(\bm{u})=tS^2+\frac{c}{2}\|\bm{u}\|_2^2-\bm{q}^{\top}\bm{u}$, we obtain
	\begin{align}
		G(\bm{u}) &-\left[\left(t+\frac{c}{2}\right)S^2-q_\infty S\right] = \frac{c}{2}\bigl(\|\bm{u}\|_2^2-S^2\bigr) + \bigl(q_\infty S-\bm{q}^{\top}\bm{u}\bigr) \notag \\
		&= -\frac{c}{2}\bigl(S^2-\|\bm{u}\|_2^2\bigr) + \sum_{i=1}^n (q_\infty-q_i)u_i \ge0, \label{eq:G-gap-c-negative}
	\end{align}
	where the last inequality follows from $c<0$, $S^2-\|\bm{u}\|_2^2\ge0$, and $q_\infty-q_i\ge0$. Hence
	\begin{equation*}
		G(\bm{u}) \ge \left(t+\frac{c}{2}\right)S^2-q_\infty S, \qquad \bm{u}\in\Delta_S.
	\end{equation*}
	
	It remains to characterize the equality case in \eqref{eq:G-gap-c-negative}. Since both terms on the right-hand side of \eqref{eq:G-gap-c-negative} are nonnegative, equality holds if and only if $S^2-\|\bm{u}\|_2^2=0$ and
	\begin{equation}
		\label{eq:equality-linear-simplex}
		(q_\infty-q_i)u_i=0, \quad i=1,\ldots,n.
	\end{equation}
	The identity $S^2-\|\bm{u}\|_2^2 = 2\sum_{1\le i<j\le n}u_i u_j$ shows that $S^2-\|\bm{u}\|_2^2=0$ is equivalent to $u_i u_j=0$ for all $i\ne j$, that is, $\bm{u}$ has at most one nonzero component. On the other hand, \eqref{eq:equality-linear-simplex} is equivalent to $\operatorname{supp}(\bm{u})\subseteq \mathcal{M}$, where $\mathcal{M}=\{i:\ q_i=q_\infty\}$.
	
	Therefore, equality in the lower bound is attained precisely when $\bm{u}$ has at most one nonzero component and this component, if present, is indexed by an element of $\mathcal{M}$. Together with the constraint $\mathbf{1}^{\top}\bm{u}=S$, this gives $\bm{u}=S\bm{e}_j$ for some $j\in\mathcal{M}$. For every such $j$, direct substitution yields
	\begin{equation*}
		G(S\bm{e}_j) = tS^2+\frac{c}{2}S^2-q_\infty S = \left(t+\frac{c}{2}\right)S^2-q_\infty S.
	\end{equation*}
	Thus the lower bound is attained exactly at the vectors $S\bm{e}_j$, $j\in\mathcal{M}$, which proves \eqref{eq:simplex-min-c-negative} and \eqref{eq:simplex-argmin-c-negative}. When $S=0$, all vectors $S\bm{e}_j$ coincide with $\bm{0}$, and hence the argmin set reduces to $\{\bm{0}\}$.
\end{proof}

By Lemma~\ref{lem:simplex-reduction-c-negative}, once the total mass $S=\mathbf{1}^{\top}\bm{u}$ is fixed, the minimum of $G$ over $\Delta_S$ is $\left(t+\frac{c}{2}\right)S^2-q_\infty S$. Since $c=1-2t\eta$, we have $t+\frac{c}{2} = \frac{1+2t(1-\eta)}{2}$. Under the standing assumption $0\le\eta\le1$, this coefficient is positive. Thus the remaining minimization over $S\ge0$ is a one-dimensional strictly convex quadratic problem. The following theorem gives the proximal formula.

\begin{theorem}[Proximal formula in the case $c<0$]
	\label{thm:prox-c-negative}
	Assume that $0\le\eta\le1$ and $c=1-2t\eta<0$. Let $q_\infty:=\|\bm{q}\|_\infty=\|\bm{v}\|_\infty$ and $\mathcal{M}:=\{i:\ |v_i|=q_\infty\}$. Define $S^* := \frac{q_\infty}{1+2t(1-\eta)}$. Then
	\begin{equation*}
		\label{eq:prox-c-negative-final}
		\operatorname{Prox}_{t\mathcal{R}_{\eta}}(\bm{v}) = \left\{\operatorname{sign}(v_j)S^*\bm{e}_j: j\in\mathcal{M}\right\}.
	\end{equation*}
	In particular, $\operatorname{Prox}_{t\mathcal{R}_{\eta}}(\bm{v})=\{\bm{0}\}$ if $\bm{v}=\bm{0}$, and it is single-valued whenever $\bm{v}\ne\bm{0}$ and the largest entry of $|\bm{v}|$ is unique.
\end{theorem}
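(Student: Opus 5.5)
The plan is to reduce, via Lemma~\ref{lem:sign-reduction}, to minimizing $G$ over $\mathbb{R}_+^n$, and then to split this into an outer minimization over the total mass $S=\mathbf{1}^{\top}\bm{u}\ge0$ and an inner minimization over the fixed-mass simplex $\Delta_S$. Since $\mathbb{R}_+^n=\bigcup_{S\ge0}\Delta_S$, we have
\begin{equation*}
\min_{\bm{u}\in\mathbb{R}_+^n}G(\bm{u})=\min_{S\ge0}\,\min_{\bm{u}\in\Delta_S}G(\bm{u})=\min_{S\ge0}\psi(S),\qquad \psi(S):=\Bigl(t+\frac{c}{2}\Bigr)S^2-q_\infty S,
\end{equation*}
where the inner value is supplied by Lemma~\ref{lem:simplex-reduction-c-negative}. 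Existence of a global minimizer of $G$ is already guaranteed by the coercivity argument in Lemma~\ref{lem:sign-reduction}.

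Next, I will treat the one-dimensional problem. Because $t+\frac{c}{2}=\frac{1+2t(1-\eta)}{2}>0$ under $0\le\eta\le1$, the function $\psi$ is a strictly convex quadratic on $[0,\infty)$. Its unconstrained stationary point is
\begin{equation*}
\frac{q_\infty}{2t+c}=\frac{q_\infty}{1+2t(1-\eta)}=S^*\ge0,
\end{equation*}
so $S^*$ is the unique minimizer of $\psi$ on $[0,\infty)$.

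Then I will characterize the argmin set of $G$. A vector $\bar{\bm{u}}\in\mathbb{R}_+^n$ is a global minimizer of $G$ if and only if
\begin{equation*}
G(\bar{\bm{u}})=\psi(S^*).
\end{equation*}
Setting $\bar S=\mathbf{1}^{\top}\bar{\bm{u}}$, we have the chain $G(\bar{\bm{u}})\ge\psi(\bar S)\ge\psi(S^*)$. Equality in the second step forces $\bar S=S^*$, by uniqueness of the minimizer of $\psi$. Equality in the first step then forces $\bar{\bm{u}}\in\operatorname{arg\,min}_{\Delta_{S^*}}G$, which by Lemma~\ref{lem:simplex-reduction-c-negative} equals $\{S^*\bm{e}_j: j\in\mathcal{M}\}$. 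Conversely, each such vector attains $\psi(S^*)$. Applying Lemma~\ref{lem:sign-reduction} to map back through $\operatorname{sign}(\bm{v})\odot\bm{u}$ yields the claimed set $\{\operatorname{sign}(v_j)S^*\bm{e}_j:j\in\mathcal{M}\}$.

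Finally, I will handle the special cases. If $\bm{v}=\bm{0}$, then $q_\infty=0$ and $S^*=0$, so every candidate collapses to $\bm{0}$. This is consistent even though $\mathcal{M}=\{1,\ldots,n\}$ in that case. If $\bm{v}\ne\bm{0}$ and $\mathcal{M}$ is a singleton, the set contains exactly one element.

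The main subtlety, rather than a hard obstacle, is justifying the interchange of minimizations and ensuring the equality chain captures every minimizer. This is handled by the two-step inequality argument above, without any need for continuity in $S$.
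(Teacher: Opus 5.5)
Your proposal is correct and follows essentially the same route as the paper. You reduce to $G$ on $\mathbb{R}_+^n$ via Lemma~\ref{lem:sign-reduction}, use Lemma~\ref{lem:simplex-reduction-c-negative} on each $\Delta_S$, minimize the strictly convex quadratic $\psi$ to get $S^*$, and map back through the signs; your explicit chain $G(\bar{\bm{u}})\ge\psi(\bar S)\ge\psi(S^*)$ just spells out the identification of the argmin set, which the paper states more briefly.
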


\begin{proof}
	By Lemma~\ref{lem:simplex-reduction-c-negative}, for every fixed $S\ge0$, one has $\min_{\bm{u}\in\Delta_S}G(\bm{u}) = \left(t+\frac{c}{2}\right)S^2-q_\infty S$, and the minimizers over $\Delta_S$ are precisely $S\bm{e}_j$ with $j\in\mathcal{M}$. Since $\mathbb{R}_+^n=\bigcup_{S\ge0}\Delta_S$, the global minimization of $G$ over $\mathbb{R}_+^n$ reduces to minimizing the scalar function $\psi(S) := \left(t+\frac{c}{2}\right)S^2-q_\infty S$ with $S\ge0$.	Using $c=1-2t\eta$, we have $t+\frac{c}{2} = \frac{1+2t(1-\eta)}{2}$. Since $0\le\eta\le1$, the coefficient $1+2t(1-\eta)$ is positive. Therefore, $\psi$ is strictly convex on $[0,\infty)$. Its derivative is $\psi'(S) = \left(1+2t(1-\eta)\right)S-q_\infty$,	and hence the unique minimizer is $S^* = \frac{q_\infty}{1+2t(1-\eta)}$. It follows again from Lemma~\ref{lem:simplex-reduction-c-negative} that the global minimizers of $G$ are exactly $\operatorname*{arg\,min}_{\bm{u}\in\mathbb{R}_+^n}G(\bm{u}) = \left\{S^*\bm{e}_j: j\in\mathcal{M}\right\}$. Applying the sign reduction in Lemma~\ref{lem:sign-reduction}, we obtain $\operatorname{sign}(\bm{v})\odot(S^*\bm{e}_j) = \operatorname{sign}(v_j)S^*\bm{e}_j$ for $j\in\mathcal{M}$. Thus $\operatorname{Prox}_{t\mathcal{R}_{\eta}}(\bm{v}) = \left\{\operatorname{sign}(v_j)S^*\bm{e}_j: j\in\mathcal{M}\right\}$.
\end{proof}

\begin{remark}[]\label{rem:set-valued-prox}
	When $c>0$, the function $G$ is strictly convex, and hence $\operatorname{Prox}_{t\mathcal{R}_{\eta}}(\bm{v})$ is single-valued. In contrast, when $c=0$ or $c<0$, the proximal operator may be set-valued. In the case $c=0$, $\bm{u}$ is allowed to have support inside the maximal-index set $\mathcal{M}:=\{i:\ |v_i|=\|\bm{v}\|_{\infty}\}$, while its $\ell_1$ norm is fixed. Thus, if $\mathcal{M}$ contains more than one index and $\bm{v}\ne\bm{0}$, the proximal set contains infinitely many points. In the case $c<0$, if $\bm{v}\ne\bm{0}$, every global minimizer of $G$ is one-sparse and is supported on an index in $\mathcal{M}$. Hence, for $\bm{v}\ne\bm{0}$, the proximal operator is set-valued precisely when the largest magnitude of $\bm{v}$ is attained at more than one index. If $\bm{v}=\bm{0}$, then $S^*=0$ and the proximal set reduces to $\{\bm{0}\}$ under the standing assumption $0\le\eta\le1$.
\end{remark}

The restriction $\eta\in[0,1]$ guarantees that the proximal subproblem is well-defined for every stepsize $t>0$. Indeed, in the nonconvex case $c<0$, the relevant scalar quadratic coefficient is $\frac{1+2t(1-\eta)}{2}$. For the proximal subproblem to be bounded below for every $\bm{v}$, one needs $1+2t(1-\eta)>0$, or equivalently, $\eta<1+\frac{1}{2t}$. Thus, if one allows $\eta>1$, the proximal operator may still exist for sufficiently small $t$, provided $\eta<1+\frac{1}{2t}$. However, if $\eta\ge 1+\frac{1}{2t}$, then along a one-sparse ray $\bm{u}=S\bm{e}_j$ with $j\in\mathcal{M}$, one obtains
$
	G(S\bm{e}_j) = \frac{1+2t(1-\eta)}{2}S^2-q_\infty S.
$
If the quadratic coefficient is negative, then $G(S\bm{e}_j)\to-\infty$ as $S\to+\infty$. If the coefficient is zero and $q_\infty>0$, then again $G(S\bm{e}_j)\to-\infty$ linearly. At the boundary $1+2t(1-\eta)=0$ with $\bm{v}=\bm{0}$, the reduced problem is bounded below but has nonunique one-sparse minimizers. Hence the strict condition $1+2t(1-\eta)>0$ is the correct condition for existence for every $\bm{v}$.
	
For this reason, the range $\eta\in[0,1]$ is natural: it guarantees the nonnegativity of the penalty $\|\bm{x}\|_1^2-\eta\|\bm{x}\|_2^2\ge 0$, because $\|\bm{x}\|_2\le \|\bm{x}\|_1$, and it also ensures the unconditional existence of the proximal operator for arbitrary $t>0$.

The resulting proximal computation is summarized in Algorithm~\ref{alg:prox_l1sq_l2sq_compact}. In the branch $c>0$, the active-set search in Algorithm~\ref{alg:prox_l1sq_l2sq_compact} is dominated by sorting the magnitudes $|v_i|$. Thus the sorting-based proximal step costs $O(n\log n)$ arithmetic operations, while the cost can be reduced to $O(n)$ if a selection-based active-set search is used.

\begin{algorithm}[t]
	\caption{Compact computation of $\operatorname{Prox}_{t\mathcal{R}_{\eta}}(\bm{v})$}
	\label{alg:prox_l1sq_l2sq_compact}
	\LinesNumbered
	\KwIn{$\bm{v}\in\mathbb{R}^n$, $\eta\in[0,1]$, stepsize $t>0$}
	\KwOut{One element $\bm{x}^*\in\operatorname{Prox}_{t\mathcal{R}_{\eta}}(\bm{v})$}
	Compute magnitudes and curvature: $\bm{q}\leftarrow|\bm{v}|$, $c\leftarrow1-2t\eta$\;
	\If{$\bm{v}=\bm{0}$}{
		\Return $\bm{x}^*=\bm{0}$\;
	}
	\If{$c>0$}{
		Sort magnitudes: $q_{(1)}\ge\cdots\ge q_{(n)}$\;
		Accumulate sorted magnitudes: $Q_k\leftarrow\sum_{i=1}^k q_{(i)}$, $k=1,\ldots,n$\;
		Determine active-set size: $k^*\leftarrow\max\{k\in\{1,\ldots,n\}: q_{(k)}>2tQ_k/(c+2tk)\}$\;
		Compute total mass: $S^*\leftarrow Q_{k^*}/(c+2tk^*)$\;
		Recover signs: $x_i^*\leftarrow\operatorname{sign}(v_i)\max\{0,(|v_i|-2tS^*)/c\}$, $i=1,\ldots,n$\;
		\Return $\bm{x}^*$\;
	}
	Identify maximal entries: $q_\infty\leftarrow\|\bm{q}\|_\infty$, $\mathcal{M}\leftarrow\{i: q_i=q_\infty\}$\;
	\If{$c=0$}{
		Select critical-regime magnitude: choose $\bm{u}\in\mathbb{R}_+^n$ with $\operatorname{supp}(\bm{u})\subseteq\mathcal{M}$ and $\|\bm{u}\|_1=q_\infty/(2t)$\;
		Recover signs: $\bm{x}^*\leftarrow\operatorname{sign}(\bm{v})\odot\bm{u}$\;
		\Return $\bm{x}^*$\;
	}
	\If{$c<0$}{
		Select maximal coordinate: choose $j\in\mathcal{M}$\;
		Form one-sparse minimizer: $\bm{x}^*\leftarrow\operatorname{sign}(v_j)q_\infty\bm{e}_j/[1+2t(1-\eta)]$\;
		\Return $\bm{x}^*$\;
	}
\end{algorithm}

\section{ADMM Algorithm}\label{sec:admm-prox}

The preceding recovery results are formulated for the constrained WDSN models $\mathcal{P}_{\eta}$ and $\mathcal{P}_{\eta}^{\varepsilon}$ in \eqref{eq:WDSN_models}. These formulations are convenient for recovery analysis, because the data-fidelity constraint is stated explicitly. In the noiseless case, the constraint is $\bm{A}\bm{x}=\bm{b}$, whereas in the noisy case the residual is required to satisfy $\|\bm{A}\bm{x}-\bm{b}\|_2\le\varepsilon$.

To write the two constrained models in a unified form, we use the indicator function of a set $\mathcal{D}$, defined by $\iota_{\mathcal{D}}(\bm{y}) := 0$ if $\bm{y}\in\mathcal{D}$ and $+\infty$ otherwise. Let $\mathbb{B}_{\delta}:=\{\bm{r}\in\mathbb{R}^m:\|\bm{r}\|_2\le\delta\}$. Then both models can be written as
\begin{equation}
	\label{eq:indicator-wdsn-model}
	\min_{\bm{x}\in\mathbb{R}^n} \left\{\mathcal{R}_{\eta}(\bm{x}) + \iota_{\mathbb{B}_{\delta}}(\bm{A}\bm{x}-\bm{b})\right\}, \qquad \delta\in\{0,\varepsilon\}.
\end{equation}
When $\delta=0$, since $\mathbb{B}_{0}=\{\bm{0}\}$, the condition $\bm{A}\bm{x}-\bm{b}\in\mathbb{B}_{0}$ is equivalent to $\bm{A}\bm{x}=\bm{b}$, and \eqref{eq:indicator-wdsn-model} reduces to $\mathcal{P}_{\eta}$. When $\delta=\varepsilon$, \eqref{eq:indicator-wdsn-model} reduces to $\mathcal{P}_{\eta}^{\varepsilon}$. Thus \eqref{eq:indicator-wdsn-model} gives a common indicator-function formulation of the noiseless and noisy constrained WDSN models; the two cases differ only in the choice of the set $\mathbb{B}_{\delta}$.

For computation, we replace the indicator function in 
\eqref{eq:indicator-wdsn-model} by the standard quadratic data-fidelity term. This leads to the regularized WDSN (RWDSN) model
\begin{equation}
	\label{eq:regularized-wdsn-model}
	\min_{\bm{x}\in\mathbb{R}^n} F_\lambda(\bm{x}) = \frac12\|\bm{A}\bm{x}-\bm{b}\|_2^2+\lambda\mathcal{R}_{\eta}(\bm{x}), \quad \lambda>0 .
\end{equation}
The regularization parameter $\lambda$ determines the relative weight of the WDSN penalty with respect to the data-fidelity term.  The smaller values of $\lambda$ emphasize data fidelity, whereas larger values assign more weight to the WDSN regularization. Thus \eqref{eq:regularized-wdsn-model} should be regarded as a penalized computational formulation associated with the constrained models.

The connection with the noiseless model $\mathcal{P}_{\eta}$ can still be made precise at the level of global minimizers. Let $\bm{x}_{\lambda}$ be a global minimizer of \eqref{eq:regularized-wdsn-model}, and let $\bm{x}^{\star}$ be a global minimizer of $\mathcal{P}_{\eta}$. Since $\bm{A}\bm{x}^{\star}=\bm{b}$, the global optimality of $\bm{x}_{\lambda}$ gives
\begin{equation}
	\label{eq:penalty-path-bound}
	\frac12\|\bm{A}\bm{x}_{\lambda}-\bm{b}\|_2^2 + \lambda\mathcal{R}_{\eta}(\bm{x}_{\lambda}) \le \lambda\mathcal{R}_{\eta}(\bm{x}^{\star}).
\end{equation}
For $0\le\eta\le1$, one has $\mathcal{R}_{\eta}(\bm{x})\ge0$ for all $\bm{x}\in\mathbb{R}^n$. Hence \eqref{eq:penalty-path-bound} implies
\begin{equation}
	\label{eq:penalty-residual-bound}
	\|\bm{A}\bm{x}_{\lambda}-\bm{b}\|_2 \le \sqrt{2\lambda\mathcal{R}_{\eta}(\bm{x}^{\star})}.
\end{equation}
Thus the residual of any global minimizer of \eqref{eq:regularized-wdsn-model} tends to zero as $\lambda\downarrow0$. Moreover, \eqref{eq:penalty-path-bound} also yields $\mathcal{R}_{\eta}(\bm{x}_{\lambda}) \le \mathcal{R}_{\eta}(\bm{x}^{\star})$. Let $\lambda_j\downarrow0$ and suppose that the sequence $\{\bm{x}_{\lambda_j}\}$ has a cluster point $\bar{\bm{x}}$. Passing to a subsequence if necessary, we may assume that $\bm{x}_{\lambda_j}\to\bar{\bm{x}}$. By \eqref{eq:penalty-residual-bound}, we have $\bm{A}\bar{\bm{x}}=\bm{b}$, and hence $\bar{\bm{x}}$ is feasible for $\mathcal{P}_{\eta}$. Since $\mathcal{R}_{\eta}$ is continuous, $\mathcal{R}_{\eta}(\bm{x}_{\lambda}) \le \mathcal{R}_{\eta}(\bm{x}^{\star})$ gives $\mathcal{R}_{\eta}(\bar{\bm{x}}) \le \mathcal{R}_{\eta}(\bm{x}^{\star})$. On the other hand, since $\bm{x}^{\star}$ is a global minimizer of $\mathcal{P}_{\eta}$ and $\bar{\bm{x}}$ is equality-feasible, one also has $\mathcal{R}_{\eta}(\bm{x}^{\star}) \le \mathcal{R}_{\eta}(\bar{\bm{x}})$. Therefore, $\mathcal{R}_{\eta}(\bar{\bm{x}}) =\mathcal{R}_{\eta}(\bm{x}^{\star})$, and $\bar{\bm{x}}$ is also a global minimizer of $\mathcal{P}_{\eta}$. This argument gives a relation between the constrained model and the penalized model at the level of global minimizers.

For noisy data, let $\bm{x}_{\lambda}$ be a global minimizer of \eqref{eq:regularized-wdsn-model}, and define $ \varepsilon_{\lambda} := \|\bm{A}\bm{x}_{\lambda}-\bm{b}\|_2$. Then $\bm{x}_{\lambda}$ is a global minimizer of
\begin{equation}
	\label{eq:residual-level-model}
	\min_{\bm{x}\in\mathbb{R}^n} \left\{\mathcal{R}_{\eta}(\bm{x}): \|\bm{A}\bm{x}-\bm{b}\|_2\le\varepsilon_{\lambda}\right\}.
\end{equation}
Indeed, let $\bm{y}$ be feasible for \eqref{eq:residual-level-model}. If $\mathcal{R}_{\eta}(\bm{y})<\mathcal{R}_{\eta}(\bm{x}_{\lambda})$, then, since $\|\bm{A}\bm{y}-\bm{b}\|_2\le\varepsilon_{\lambda} =\|\bm{A}\bm{x}_{\lambda}-\bm{b}\|_2$, we would have $\frac12\|\bm{A}\bm{y}-\bm{b}\|_2^2 + \lambda\mathcal{R}_{\eta}(\bm{y}) < \frac12\|\bm{A}\bm{x}_{\lambda}-\bm{b}\|_2^2 + \lambda\mathcal{R}_{\eta}(\bm{x}_{\lambda})$, which contradicts the global minimality of $\bm{x}_{\lambda}$ for \eqref{eq:regularized-wdsn-model}. Therefore no feasible point of \eqref{eq:residual-level-model} has a smaller WDSN value than $\bm{x}_{\lambda}$, and hence $\bm{x}_{\lambda}$ solves \eqref{eq:residual-level-model}.

This observation gives the usual relationship between the penalized and constrained formulations. We now derive an ADMM scheme for \eqref{eq:regularized-wdsn-model}. Let $f(\bm{x}):=\frac12\|\bm{A}\bm{x}-\bm{b}\|_2^2$. Introducing a consensus variable separates the smooth data-fidelity term from the nonsmooth WDSN penalty:
\begin{equation}
	\label{eq:regularized-split}
	\begin{aligned}
		\min_{\bm{x},\bm{z}\in\mathbb{R}^n}\quad & f(\bm{x})+\lambda \mathcal{R}_{\eta}(\bm{z}) \\
		\mathrm{s.t.}\quad & \bm{x}-\bm{z}=\bm{0}.
	\end{aligned}
\end{equation}
With scaled dual variable $\bm{u}$ and penalty parameter $\rho>0$, the scaled augmented Lagrangian is
\begin{equation}
	\label{eq:regularized-scaled-lagrangian}
	\mathcal{L}_{\rho}(\bm{x},\bm{z},\bm{u}) = f(\bm{x})+\lambda\mathcal{R}_{\eta}(\bm{z}) +\frac{\rho}{2}\|\bm{x}-\bm{z}+\bm{u}\|_2^2 -\frac{\rho}{2}\|\bm{u}\|_2^2 .
\end{equation}
The corresponding ADMM iteration is written in the $z$-first Gauss--Seidel order:
\begin{subequations}
	\label{eq:regularized-admm}
	\begin{empheq}[left=\empheqlbrace]{align}
		\bm{z}^{(k+1)} &\in \arg\min_{\bm{z}\in\mathbb{R}^n} \left\{\lambda \mathcal{R}_{\eta}(\bm{z}) +\frac{\rho}{2}\|\bm{x}^{(k)}-\bm{z}+\bm{u}^{(k)}\|_2^2\right\}, \label{eq:regularized-z-update}\\
		\bm{x}^{(k+1)} &= \arg\min_{\bm{x}\in\mathbb{R}^n} \left\{\frac12\|\bm{A}\bm{x}-\bm{b}\|_2^2 +\frac{\rho}{2}\|\bm{x}-\bm{z}^{(k+1)}+\bm{u}^{(k)}\|_2^2\right\}, \label{eq:regularized-x-update}\\
		\bm{u}^{(k+1)} &= \bm{u}^{(k)}+\bm{x}^{(k+1)}-\bm{z}^{(k+1)}. \label{eq:regularized-u-update}
	\end{empheq}
\end{subequations}

The $\bm{z}$-subproblem is exactly a WDSN proximal step. Define $\bm{p}^{(k)}:=\bm{x}^{(k)}+\bm{u}^{(k)}$. After dividing \eqref{eq:regularized-z-update} by $\lambda>0$, we obtain
\begin{equation*}
	\bm{z}^{(k+1)} \in \arg\min_{\bm{z}\in\mathbb{R}^n} \left\{\mathcal{R}_{\eta}(\bm{z}) +\frac{\rho}{2\lambda}\|\bm{z}-\bm{p}^{(k)}\|_2^2\right\}.
\end{equation*}
It follows that $\bm{z}^{(k+1)} \in \operatorname{Prox}_{(\lambda/\rho)\mathcal{R}_{\eta}}(\bm{p}^{(k)})$.

The $\bm{x}$-subproblem is a strongly convex quadratic problem. Its first-order optimality condition is
$
	\bm{A}^{\top}(\bm{A}\bm{x}^{(k+1)}-\bm{b}) + \rho(\bm{x}^{(k+1)}-\bm{z}^{(k+1)}+\bm{u}^{(k)}) =0.
$
Equivalently, $\bm{x}^{(k+1)}$ is obtained by solving the fixed-coefficient linear system
\begin{equation}
	\label{eq:regularized-x-linear-system}
	(\bm{A}^{\top}\bm{A}+\rho\bm{I})\bm{x}^{(k+1)} = \bm{A}^{\top}\bm{b}+\rho(\bm{z}^{(k+1)}-\bm{u}^{(k)}).
\end{equation}
Since $\rho>0$, the matrix $\bm{A}^{\top}\bm{A}+\rho\bm{I}$ is positive definite, so this subproblem has a unique minimizer. In implementation, the coefficient matrix is unchanged across iterations, and its factorization can be reused.

Algorithm~\ref{alg:admm_regularized} summarizes the ADMM scheme for the penalized model \eqref{eq:regularized-wdsn-model}. The same penalized solver can be applied to data generated from either noiseless or noisy measurement models; the noise level affects the practical choice of the regularization parameter $\lambda$.

\begin{algorithm}[t]
	\caption{$z$-first ADMM for the regularized WDSN model}
	\label{alg:admm_regularized}
	\LinesNumbered
	\KwIn{$\bm{A}$, $\bm{b}$, $\eta\in[0,1]$, $\lambda>0$, $\rho>0$, $\bm{x}^{(0)}$, $\bm{u}^{(0)}$, $\varepsilon_{\mathrm{pri}}>0$, $\varepsilon_{\mathrm{dual}}>0$, $K$}
	\KwOut{An approximate solution of \eqref{eq:regularized-wdsn-model}}
	
	Precompute a factorization of $\bm{A}^{\top}\bm{A}+\rho\bm{I}$\;
	
	\For{$k=0,1,2,\ldots,K-1$}{
		Form proximal input: $\bm{p}^{(k)}\leftarrow \bm{x}^{(k)}+\bm{u}^{(k)}$\;
		Choose $\bm{z}^{(k+1)}\in\operatorname{Prox}_{(\lambda/\rho)\mathcal{R}_{\eta}}(\bm{p}^{(k)})$\;
		Assemble right-hand side: $\bm{h}^{(k+1)}\leftarrow \bm{A}^{\top}\bm{b}+\rho(\bm{z}^{(k+1)}-\bm{u}^{(k)})$\;
		Solve quadratic subproblem: $(\bm{A}^{\top}\bm{A}+\rho\bm{I})\bm{x}^{(k+1)}=\bm{h}^{(k+1)}$\;
		Update scaled dual variable: $\bm{u}^{(k+1)}\leftarrow\bm{u}^{(k)}+\bm{x}^{(k+1)}-\bm{z}^{(k+1)}$\;
		Compute primal residual: $\bm{r}^{(k+1)}\leftarrow\bm{x}^{(k+1)}-\bm{z}^{(k+1)}$\;
		Compute dual residual: $\bm{s}^{(k+1)}\leftarrow\rho(\bm{x}^{(k+1)}-\bm{x}^{(k)})$\;
		
		\If{$\|\bm{r}^{(k+1)}\|_2\le\varepsilon_{\mathrm{pri}}$ and $\|\bm{s}^{(k+1)}\|_2\le\varepsilon_{\mathrm{dual}}$}{
			\Return $\bm{z}^{(k+1)}$\;
		}
	}
	
	\Return $\bm{z}^{(K)}$\;
\end{algorithm}

\section{Convergence Analysis}
\label{sec:admm-convergence}

In this section, we establish the global convergence of the proposed ADMM scheme (Algorithm \ref{alg:admm_regularized}). Throughout the rest of this section, let
\begin{equation*}
	g(\bm{z}):=\lambda\mathcal{R}_\eta(\bm{z}),\quad F_\lambda:=f+\lambda\mathcal{R}_\eta,\quad L_f:=\|\bm{A}^{\top}\bm{A}\|_2,
\end{equation*}
and write $\bm{w}^{(k)}:=(\bm{x}^{(k)},\bm{z}^{(k)},\bm{u}^{(k)})$ and $\mathcal{L}_\rho^k:=\mathcal{L}_\rho(\bm{x}^{(k)},\bm{z}^{(k)},\bm{u}^{(k)})$. For $\bm{s}\in\mathbb{R}^n$, we use $\operatorname{SGN}(\bm{s})$ for the set-valued sign vector, that is,
$
	\operatorname{SGN}(\bm{s}) :=\{\bm{\xi}\in\mathbb{R}^n:\ \xi_i=\operatorname{sign}(s_i)\ \text{if }s_i\ne0,\ \xi_i\in[-1,1]\ \text{if }s_i=0\}.
$

\begin{theorem}[Existence of global minimizers]
	\label{thm:existence-regularized-wdsn}
	Let $\bm{A}\in\mathbb{R}^{m\times n}$, $\bm{b}\in\mathbb{R}^m$, $\lambda>0$, and $\eta\in[0,1]$.  Assume that either $0\le\eta<1$, or $\eta=1$ and
	\begin{equation}
		\label{eq:no-degenerate-direction}
		\left\{\bm{d}\in\mathbb{R}^n\setminus\{\bm{0}\}: \bm{A}\bm{d}=\bm{0},\ \mathcal{R}_1(\bm{d})=0\right\} = \varnothing .
	\end{equation}
	Then $F_\lambda(\bm{x})$ is coercive. Consequently, the solution set of \eqref{eq:regularized-wdsn-model} is nonempty and compact.
\end{theorem}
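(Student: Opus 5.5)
The plan is to prove coercivity of $F_\lambda(\bm{x})=\tfrac12\|\bm{A}\bm{x}-\bm{b}\|_2^2+\lambda\mathcal{R}_\eta(\bm{x})$ and then deduce existence and compactness of the solution set from continuity. We treat the two regimes separately.

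\emph{Case $0\le\eta<1$.} Since $\|\bm{x}\|_1\ge\|\bm{x}\|_2$, we have
\[
\mathcal{R}_\eta(\bm{x})\ge(1-\eta)\|\bm{x}\|_2^2 .
\]
Because the data term is nonnegative, this gives $F_\lambda(\bm{x})\ge\lambda(1-\eta)\|\bm{x}\|_2^2\to\infty$ as $\|\bm{x}\|_2\to\infty$, so $F_\lambda$ is coercive.

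\emph{Case $\eta=1$ under \eqref{eq:no-degenerate-direction}.} Here $\mathcal{R}_1\ge0$ is continuous and $2$-homogeneous, but it may vanish on nonzero vectors (the one-sparse ones), so a compactness-on-the-sphere argument is needed.
\begin{enumerate}[(i)]
\item Consider the continuous function $\Phi(\bm{d}):=\|\bm{A}\bm{d}\|_2^2+\mathcal{R}_1(\bm{d})$ on the unit sphere. Both summands are nonnegative. If $\Phi(\bm{d})=0$ at some unit vector $\bm{d}$, then $\bm{A}\bm{d}=\bm{0}$ and $\mathcal{R}_1(\bm{d})=0$, contradicting \eqref{eq:no-degenerate-direction}. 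By compactness of the sphere, $\mu:=\min_{\|\bm{d}\|_2=1}\Phi(\bm{d})>0$.
\item By $2$-homogeneity, $\|\bm{A}\bm{x}\|_2^2+\mathcal{R}_1(\bm{x})\ge\mu\|\bm{x}\|_2^2$ for all $\bm{x}$.
\item Expand the data term: $\tfrac12\|\bm{A}\bm{x}-\bm{b}\|_2^2\ge\tfrac12\|\bm{A}\bm{x}\|_2^2-\|\bm{A}\bm{x}\|_2\|\bm{b}\|_2$. Setting $\kappa:=\min\{\tfrac12,\lambda\}$, this yields
\[
F_\lambda(\bm{x})\ge\kappa\mu\|\bm{x}\|_2^2-\|\bm{A}\|_2\|\bm{b}\|_2\|\bm{x}\|_2 ,
\]
which tends to $+\infty$ as $\|\bm{x}\|_2\to\infty$.
\end{enumerate}
An alternative for step (iii) is a sequence argument: take $\|\bm{x}_j\|_2\to\infty$ with $F_\lambda(\bm{x}_j)$ bounded, normalize, and pass to a limit direction $\bm{d}$. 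Boundedness forces $\bm{A}\bm{d}=\bm{0}$ and $\mathcal{R}_1(\bm{d})=0$, again contradicting the hypothesis. The explicit bound above is cleaner.

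\emph{Conclusion.} $F_\lambda$ is continuous and coercive. Its sublevel set $\{F_\lambda\le F_\lambda(\bm{0})\}$ is closed and bounded, hence compact and nonempty, so a global minimizer exists by Weierstrass. The solution set equals $\{F_\lambda\le\min F_\lambda\}$, which is closed and bounded, hence compact.

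The main obstacle is the $\eta=1$ case, where the penalty alone is not coercive because it vanishes along coordinate axes. The key step is the positivity of $\mu$ in step (i), which comes from combining the data term with the penalty via compactness on the unit sphere. The cross term $-\langle\bm{A}\bm{x},\bm{b}\rangle$ is only linear in $\|\bm{x}\|_2$, so the quadratic lower bound dominates it.
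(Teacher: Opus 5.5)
Your proof is correct and takes essentially the same route as the paper. For $\eta<1$ it uses the bound $F_\lambda(\bm{x})\ge\lambda(1-\eta)\|\bm{x}\|_2^2$. For $\eta=1$ it obtains a positive minimum of the quadratic part on the unit sphere, where the paper puts the weights $\tfrac12$ and $\lambda$ directly into $\Theta$ instead of your $\kappa=\min\{\tfrac12,\lambda\}$; this quadratic term then dominates the linear cross term, and existence and compactness follow from compact sublevel sets.
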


\begin{proof}
	Since $\|\bm{x}\|_1\ge \|\bm{x}\|_2$, for every $\bm{x}\in\mathbb{R}^n$ one has $\mathcal{R}_\eta(\bm{x})=\|\bm{x}\|_1^2-\eta\|\bm{x}\|_2^2 \ge (1-\eta)\|\bm{x}\|_2^2\ge0$. If $0\le\eta<1$, then $F_\lambda(\bm{x})\ge \lambda(1-\eta)\|\bm{x}\|_2^2$, and hence $F_\lambda(\bm{x})\to+\infty$ as $\|\bm{x}\|_2\to+\infty$. Thus $F_\lambda$ is coercive in this case.
	
	It remains to consider $\eta=1$. In this case, $\mathcal{R}_1(\bm{x})=\|\bm{x}\|_1^2-\|\bm{x}\|_2^2 =2\sum_{1\le i<j\le n}|x_i||x_j|\ge0$. For every $\bm{x}\neq\bm{0}$, write $\bm{x}=t\bm{d}$, where $t:=\|\bm{x}\|_2>0$ and $\bm{d}:=\bm{x}/\|\bm{x}\|_2$, so that $\|\bm{d}\|_2=1$. Define $\Theta(\bm{d}):=\frac12\|\bm{A}\bm{d}\|_2^2+\lambda\mathcal{R}_1(\bm{d})$ for $\|\bm{d}\|_2=1$. By \eqref{eq:no-degenerate-direction} and the nonnegativity of both terms in $\Theta$, we have $\Theta(\bm{d})>0$ on the unit sphere $\mathbb{S}^{n-1}:=\{\bm{d}\in\mathbb{R}^n:\|\bm{d}\|_2=1\}$. Since $\Theta$ is continuous and $\mathbb{S}^{n-1}$ is compact, there exists $c>0$ such that $\Theta(\bm{d})\ge c$ for all $\bm{d}\in\mathbb{S}^{n-1}$. Using the quadratic homogeneity of $\mathcal{R}_1$, we get
	\begin{equation*}
		F_\lambda(t\bm{d})= \frac12\|t\bm{A}\bm{d}-\bm{b}\|_2^2 + \lambda t^2\mathcal{R}_1(\bm{d}) = t^2\Theta(\bm{d}) - t\langle \bm{A}\bm{d},\bm{b}\rangle + \frac12\|\bm{b}\|_2^2 \ge ct^2 - t\|\bm{A}\|_2\|\bm{b}\|_2 + \frac12\|\bm{b}\|_2^2 .
	\end{equation*}
	The right-hand side tends to $+\infty$ as $t\to+\infty$. Since $t=\|\bm{x}\|_2$, $F_\lambda(\bm{x})\to+\infty$ as $\|\bm{x}\|_2\to+\infty$. Hence $F_\lambda$ is coercive.
	
	Thus $F_\lambda$ is continuous and coercive under the stated assumptions. Therefore, its lower level sets are compact. Let $\{\bm{x}^k\}$ be a minimizing sequence. Since $F_\lambda(\bm{0})<+\infty$, this sequence is eventually contained in a compact lower level set and hence admits a convergent subsequence $\bm{x}^{k_j}\to\bm{x}^\star$. By continuity, $F_\lambda(\bm{x}^\star)=\inf_{\bm{x}\in\mathbb{R}^n}F_\lambda(\bm{x})$. Thus $\bm{x}^\star$ is a global minimizer. The solution set is closed by continuity and bounded by coercivity, and is therefore compact.
\end{proof}

\begin{lemma}[Bregman-Lagrangian identity and lower bound]
	\label{lem:bregman_identity}
	Let $\{\bm{w}^{(k)}\}$ be generated by the exact ADMM iteration \eqref{eq:regularized-admm}. For every $k\ge1$,
	\begin{equation}
		\label{eq:dual_mapping}
		\rho\bm{u}^{(k)}=-\nabla f(\bm{x}^{(k)}).
	\end{equation}
	Moreover,
	\begin{equation}
		\label{eq:bregman_identity}
		\mathcal{L}_\rho^k
		= F_\lambda(\bm{z}^{(k)})
		-\frac{1}{2}\|\bm{A}(\bm{z}^{(k)}-\bm{x}^{(k)})\|_2^2
		+\frac{\rho}{2}\|\bm{x}^{(k)}-\bm{z}^{(k)}\|_2^2,
	\end{equation}
	where $F_\lambda=f+\lambda\mathcal{R}_\eta$. Consequently,
	\begin{equation}
		\label{eq:al_geometric_bound1}
		\mathcal{L}_\rho^k
		\ge F_\lambda(\bm{z}^{(k)})
		+\frac{\rho-L_f}{2}\|\bm{x}^{(k)}-\bm{z}^{(k)}\|_2^2 .
	\end{equation}
\end{lemma}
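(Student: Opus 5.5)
The dual identity \eqref{eq:dual_mapping} comes from the $\bm{x}$-update together with the dual update. The first-order condition of \eqref{eq:regularized-x-update} at step $k-1\to k$ is
\[
\nabla f(\bm{x}^{(k)})+\rho(\bm{x}^{(k)}-\bm{z}^{(k)}+\bm{u}^{(k-1)})=\bm{0}.
\]
By \eqref{eq:regularized-u-update}, $\bm{u}^{(k)}=\bm{u}^{(k-1)}+\bm{x}^{(k)}-\bm{z}^{(k)}$, so the bracket equals $\bm{u}^{(k)}$. This gives $\rho\bm{u}^{(k)}=-\nabla f(\bm{x}^{(k)})$ for every $k\ge1$. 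The order of the updates ($\bm{z}$ first, then $\bm{x}$, then $\bm{u}$) is exactly what makes the $\bm{x}$-step's optimality condition involve $\bm{u}^{(k)}$ directly.

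For the identity \eqref{eq:bregman_identity}, I expand the scaled augmented Lagrangian \eqref{eq:regularized-scaled-lagrangian}:
\[
\mathcal{L}_\rho^k=f(\bm{x}^{(k)})+\lambda\mathcal{R}_\eta(\bm{z}^{(k)})+\rho\langle\bm{u}^{(k)},\bm{x}^{(k)}-\bm{z}^{(k)}\rangle+\frac{\rho}{2}\|\bm{x}^{(k)}-\bm{z}^{(k)}\|_2^2 .
\]
Substituting $\rho\bm{u}^{(k)}=-\nabla f(\bm{x}^{(k)})$ turns the cross term into $\langle\nabla f(\bm{x}^{(k)}),\bm{z}^{(k)}-\bm{x}^{(k)}\rangle$. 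Since $f$ is quadratic with Hessian $\bm{A}^\top\bm{A}$, Taylor's formula is exact:
\[
f(\bm{z})=f(\bm{x})+\langle\nabla f(\bm{x}),\bm{z}-\bm{x}\rangle+\tfrac12\|\bm{A}(\bm{z}-\bm{x})\|_2^2 .
\]
Hence $f(\bm{x}^{(k)})+\langle\nabla f(\bm{x}^{(k)}),\bm{z}^{(k)}-\bm{x}^{(k)}\rangle=f(\bm{z}^{(k)})-\tfrac12\|\bm{A}(\bm{z}^{(k)}-\bm{x}^{(k)})\|_2^2$. Adding $\lambda\mathcal{R}_\eta(\bm{z}^{(k)})$ then yields \eqref{eq:bregman_identity}.

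The lower bound \eqref{eq:al_geometric_bound1} follows from $\|\bm{A}\bm{d}\|_2^2=\bm{d}^\top\bm{A}^\top\bm{A}\bm{d}\le L_f\|\bm{d}\|_2^2$. There is no real obstacle here. The only point needing care is the index bookkeeping in the first step: the identity requires $k\ge1$, because $\bm{u}^{(0)}$ is arbitrary and need not satisfy the dual relation.
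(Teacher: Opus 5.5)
Your proposal is correct and follows the paper's proof essentially step for step. Both arguments combine the $\bm{x}$-step optimality condition with the dual update to obtain $\rho\bm{u}^{(k)}=-\nabla f(\bm{x}^{(k)})$, substitute this into the expanded scaled augmented Lagrangian, use the exact quadratic expansion of $f$ (the paper phrases it as the Bregman divergence $\mathcal{D}_f(\bm{z},\bm{x})=\frac12\|\bm{A}(\bm{z}-\bm{x})\|_2^2$), and finish with $\|\bm{A}\bm{d}\|_2^2\le L_f\|\bm{d}\|_2^2$.
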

\begin{proof}
	For the quadratic function $f(\bm{x})=\frac12\|\bm{A}\bm{x}-\bm{b}\|_2^2$, the Bregman divergence satisfies
	\begin{equation} \label{eq:bregman_def}
		\mathcal{D}_f(\bm{z}, \bm{x}) := f(\bm{z}) - f(\bm{x}) - \langle \nabla f(\bm{x}), \bm{z} - \bm{x} \rangle = \frac{1}{2}\|\bm{A}(\bm{z} - \bm{x})\|_2^2.
	\end{equation}
	For $k\ge1$, the first-order optimality condition of the $\bm{x}$-subproblem is
	\begin{equation*}
		\nabla f(\bm{x}^{(k)})+\rho(\bm{x}^{(k)}-\bm{z}^{(k)}+\bm{u}^{(k-1)})=\bm{0}.
	\end{equation*}
	Combining this relation with the dual update $\bm{u}^{(k)}=\bm{u}^{(k-1)}+\bm{x}^{(k)}-\bm{z}^{(k)}$ gives $\rho\bm{u}^{(k)}=-\nabla f(\bm{x}^{(k)})$. Moreover, substituting $\rho\bm{u}^{(k)}=-\nabla f(\bm{x}^{(k)})$ into the augmented Lagrangian function gives
	\begin{align}
		\mathcal{L}_\rho^k &= f(\bm{x}^{(k)}) + \lambda \mathcal{R}_\eta(\bm{z}^{(k)}) + \langle \rho\bm{u}^{(k)}, \bm{x}^{(k)} - \bm{z}^{(k)} \rangle + \frac{\rho}{2}\|\bm{x}^{(k)} - \bm{z}^{(k)}\|_2^2 \nonumber \\
		&= f(\bm{x}^{(k)}) - \langle \nabla f(\bm{x}^{(k)}), \bm{x}^{(k)} - \bm{z}^{(k)} \rangle + \lambda \mathcal{R}_\eta(\bm{z}^{(k)}) + \frac{\rho}{2}\|\bm{x}^{(k)} - \bm{z}^{(k)}\|_2^2. \label{eq:al_expansion}
	\end{align}
	It follows from the $\mathcal{D}_f(\bm{z}, \bm{x})$ that
	\begin{equation*}
		f(\bm{x}^{(k)})-\langle\nabla f(\bm{x}^{(k)}),\bm{x}^{(k)}-\bm{z}^{(k)}\rangle
		=f(\bm{z}^{(k)})-\mathcal{D}_f(\bm{z}^{(k)},\bm{x}^{(k)}).
	\end{equation*}
	This proves \eqref{eq:bregman_identity}. Finally, $\|\bm{A}(\bm{z}-\bm{x})\|_2^2\le L_f\|\bm{z}-\bm{x}\|_2^2$ gives \eqref{eq:al_geometric_bound1}.
\end{proof}

%\subsection{Energy Dissipation and Global Convergence}

\begin{lemma}[Sufficient descent]
	\label{lem:sufficient_descent}
	Let $\{\bm{w}^{(k)}\}$ be generated by Algorithm~\ref{alg:admm_regularized}. If $\rho>\sqrt{2}L_f$, then, for every $k\ge1$, one has
	\begin{equation}
		\label{eq:descent_ineq}
		\mathcal{L}_\rho^{k+1} \le \mathcal{L}_\rho^k - \left(\frac{\rho}{2} - \frac{L_f^2}{\rho}\right) \|\bm{x}^{(k+1)}-\bm{x}^{(k)}\|_2^2 .
	\end{equation}
\end{lemma}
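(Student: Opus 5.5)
The plan is the standard three-block decomposition of the one-step change of the augmented Lagrangian, following the $z$-first order of \eqref{eq:regularized-admm}:
\begin{align*}
\mathcal{L}_\rho^{k+1}-\mathcal{L}_\rho^k
&=\bigl[\mathcal{L}_\rho(\bm{x}^{(k)},\bm{z}^{(k+1)},\bm{u}^{(k)})-\mathcal{L}_\rho(\bm{x}^{(k)},\bm{z}^{(k)},\bm{u}^{(k)})\bigr]\\
&\quad+\bigl[\mathcal{L}_\rho(\bm{x}^{(k+1)},\bm{z}^{(k+1)},\bm{u}^{(k)})-\mathcal{L}_\rho(\bm{x}^{(k)},\bm{z}^{(k+1)},\bm{u}^{(k)})\bigr]\\
&\quad+\bigl[\mathcal{L}_\rho^{k+1}-\mathcal{L}_\rho(\bm{x}^{(k+1)},\bm{z}^{(k+1)},\bm{u}^{(k)})\bigr].
\end{align*}

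\textbf{$\bm{z}$-step.} Since $\bm{z}^{(k+1)}$ is a global minimizer of $\mathcal{L}_\rho(\bm{x}^{(k)},\cdot,\bm{u}^{(k)})$, the first bracket is $\le 0$. This holds by global optimality alone, so nonconvexity of $\mathcal{R}_\eta$ and set-valuedness of the proximal map (Theorems~\ref{thm:prox-c-zero}, \ref{thm:prox-c-negative}) cause no difficulty.

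\textbf{$\bm{x}$-step.} The map $\bm{x}\mapsto f(\bm{x})+\frac{\rho}{2}\|\bm{x}-\bm{z}^{(k+1)}+\bm{u}^{(k)}\|_2^2$ is $\rho$-strongly convex, and $\bm{x}^{(k+1)}$ is its exact minimizer. Hence the second bracket is $\le -\frac{\rho}{2}\|\bm{x}^{(k+1)}-\bm{x}^{(k)}\|_2^2$, obtained by expanding the quadratic around the minimizer and using the optimality condition.

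\textbf{$\bm{u}$-step.} From \eqref{eq:regularized-scaled-lagrangian}, with $\bm{r}:=\bm{x}^{(k+1)}-\bm{z}^{(k+1)}=\bm{u}^{(k+1)}-\bm{u}^{(k)}$, expand
\[
\tfrac{\rho}{2}\|\bm{r}+\bm{u}\|^2-\tfrac{\rho}{2}\|\bm{u}\|^2=\rho\langle\bm{u},\bm{r}\rangle+\tfrac{\rho}{2}\|\bm{r}\|^2 .
\]
Evaluating at $\bm{u}^{(k+1)}$ and at $\bm{u}^{(k)}$ and subtracting, the third bracket equals $\rho\|\bm{u}^{(k+1)}-\bm{u}^{(k)}\|_2^2$.

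\textbf{Controlling the dual increase.} This is the step that needs care. By \eqref{eq:dual_mapping} in Lemma~\ref{lem:bregman_identity}, valid for all indices $\ge1$ and hence at both $k$ and $k+1$ when $k\ge1$,
\[
\rho(\bm{u}^{(k+1)}-\bm{u}^{(k)})=-\bigl(\nabla f(\bm{x}^{(k+1)})-\nabla f(\bm{x}^{(k)})\bigr)=-\bm{A}^\top\bm{A}(\bm{x}^{(k+1)}-\bm{x}^{(k)}),
\]
so $\|\bm{u}^{(k+1)}-\bm{u}^{(k)}\|_2\le \frac{L_f}{\rho}\|\bm{x}^{(k+1)}-\bm{x}^{(k)}\|_2$. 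The third bracket is therefore at most $\frac{L_f^2}{\rho}\|\bm{x}^{(k+1)}-\bm{x}^{(k)}\|_2^2$. The restriction $k\ge1$ is exactly what makes \eqref{eq:dual_mapping} available at index $k$.

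\textbf{Conclusion.} Summing the three bounds gives \eqref{eq:descent_ineq}. The coefficient $\frac{\rho}{2}-\frac{L_f^2}{\rho}$ is positive precisely when $\rho^2>2L_f^2$, that is, $\rho>\sqrt2 L_f$, which is the stated hypothesis.
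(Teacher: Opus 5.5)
Your proposal is correct and follows the paper's proof essentially step for step. It uses the same $\bm{z}$/$\bm{x}$/$\bm{u}$ splitting of $\mathcal{L}_\rho^{k+1}-\mathcal{L}_\rho^k$: global optimality for the $\bm{z}$-step, $\rho$-strong convexity for the $\bm{x}$-step, and the exact identity $\rho\|\bm{u}^{(k+1)}-\bm{u}^{(k)}\|_2^2$ for the dual step, which is then bounded by $\frac{L_f^2}{\rho}\|\bm{x}^{(k+1)}-\bm{x}^{(k)}\|_2^2$ via $\rho\bm{u}^{(j)}=-\nabla f(\bm{x}^{(j)})$ at $j=k,k+1$ (valid since $k\ge1$).
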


\begin{proof}
	By the optimality of $\bm{z}^{(k+1)}$ in the $\bm{z}$-subproblem, we have
	\begin{equation*}
		\mathcal{L}_\rho(\bm{x}^{(k)},\bm{z}^{(k+1)},\bm{u}^{(k)}) \le \mathcal{L}_\rho(\bm{x}^{(k)},\bm{z}^{(k)},\bm{u}^{(k)}).
	\end{equation*}
	For fixed $(\bm{z}^{(k+1)},\bm{u}^{(k)})$, the $\bm{x}$-subproblem has Hessian $\bm{A}^\top\bm{A}+\rho\bm{I}\succeq \rho\bm{I}$. Therefore,
	\begin{equation*}
		\mathcal{L}_\rho(\bm{x}^{(k+1)},\bm{z}^{(k+1)},\bm{u}^{(k)}) \le \mathcal{L}_\rho(\bm{x}^{(k)},\bm{z}^{(k+1)},\bm{u}^{(k)}) - \frac{\rho}{2} \|\bm{x}^{(k+1)}-\bm{x}^{(k)}\|_2^2 .
	\end{equation*}
	The dual update gives $\bm{u}^{(k+1)}-\bm{u}^{(k)} = \bm{x}^{(k+1)}-\bm{z}^{(k+1)}$, and hence
	\begin{equation*}
		\begin{aligned}
			&\mathcal{L}_\rho(\bm{x}^{(k+1)},\bm{z}^{(k+1)},\bm{u}^{(k+1)}) - \mathcal{L}_\rho(\bm{x}^{(k+1)},\bm{z}^{(k+1)},\bm{u}^{(k)}) \\
			&\qquad = \rho \langle \bm{u}^{(k+1)}-\bm{u}^{(k)}, \bm{x}^{(k+1)}-\bm{z}^{(k+1)} \rangle = \rho \|\bm{u}^{(k+1)}-\bm{u}^{(k)}\|_2^2 .
		\end{aligned}
	\end{equation*}
	The first-order optimality condition of the $\bm{x}$-subproblem is $\nabla f(\bm{x}^{(k+1)}) + \rho (\bm{x}^{(k+1)}-\bm{z}^{(k+1)}+\bm{u}^{(k)}) = \bm{0}$. Using the dual update, this becomes $\rho\bm{u}^{(k+1)} = -\nabla f(\bm{x}^{(k+1)})$. For $k\ge1$, the same identity holds at index $k$. It follows from the $L_f$-Lipschitz continuous of $\nabla f$ that
	\begin{equation}
		\label{eq:dual_bound}
		\rho \|\bm{u}^{(k+1)}-\bm{u}^{(k)}\|_2 = \|\nabla f(\bm{x}^{(k+1)})-\nabla f(\bm{x}^{(k)})\|_2 \le L_f \|\bm{x}^{(k+1)}-\bm{x}^{(k)}\|_2 .
	\end{equation}
	Combining the preceding estimates gives $\mathcal{L}_\rho^{k+1} \le \mathcal{L}_\rho^k - \frac{\rho}{2} \|\bm{x}^{(k+1)}-\bm{x}^{(k)}\|_2^2 + \frac{L_f^2}{\rho} \|\bm{x}^{(k+1)}-\bm{x}^{(k)}\|_2^2$, which proves \eqref{eq:descent_ineq}.
\end{proof}

\begin{lemma}[Boundedness of the ADMM sequence]
	\label{lem:bregman_boundedness}
	Assume that the hypotheses of Theorem~\ref{thm:existence-regularized-wdsn} hold and that $\rho>\sqrt{2}L_f$. Then $\{\bm{w}^{(k)}\}$ is bounded.
\end{lemma}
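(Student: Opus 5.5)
The plan is to combine the sufficient descent property (Lemma~\ref{lem:sufficient_descent}) with the lower bound in Lemma~\ref{lem:bregman_identity} and the coercivity from Theorem~\ref{thm:existence-regularized-wdsn}.

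\textbf{Step 1: the Lagrangian is nonincreasing.} Since $\rho>\sqrt{2}L_f$, the factor $\frac{\rho}{2}-\frac{L_f^2}{\rho}$ is positive. Lemma~\ref{lem:sufficient_descent} therefore gives $\mathcal{L}_\rho^{k}\le\mathcal{L}_\rho^{1}<\infty$ for all $k\ge1$.

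\textbf{Step 2: bound $\{\bm{z}^{(k)}\}$ and $\{\bm{x}^{(k)}-\bm{z}^{(k)}\}$.} The condition $\rho>\sqrt{2}L_f$ also implies $\rho>L_f$, so $\frac{\rho-L_f}{2}>0$. By \eqref{eq:al_geometric_bound1},
\begin{equation*}
F_\lambda(\bm{z}^{(k)})+\frac{\rho-L_f}{2}\|\bm{x}^{(k)}-\bm{z}^{(k)}\|_2^2\le \mathcal{L}_\rho^1 .
\end{equation*}
Both terms on the left are nonnegative, since $F_\lambda\ge0$ when $\eta\in[0,1]$. Hence $F_\lambda(\bm{z}^{(k)})\le\mathcal{L}_\rho^1$. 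By coercivity of $F_\lambda$ (Theorem~\ref{thm:existence-regularized-wdsn}), $\{\bm{z}^{(k)}\}$ lies in a bounded lower level set. The same inequality also shows that $\|\bm{x}^{(k)}-\bm{z}^{(k)}\|_2^2\le 2\mathcal{L}_\rho^1/(\rho-L_f)$.

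\textbf{Step 3: bound $\{\bm{x}^{(k)}\}$ and $\{\bm{u}^{(k)}\}$.} From Step~2, $\{\bm{x}^{(k)}\}$ is bounded by the triangle inequality. By \eqref{eq:dual_mapping}, $\bm{u}^{(k)}=-\rho^{-1}\bm{A}^\top(\bm{A}\bm{x}^{(k)}-\bm{b})$, so $\|\bm{u}^{(k)}\|_2\le\rho^{-1}\|\bm{A}\|_2\bigl(\|\bm{A}\|_2\|\bm{x}^{(k)}\|_2+\|\bm{b}\|_2\bigr)$ is bounded as well. The single index $k=0$ involves the given initial data, which is finite. This proves that $\{\bm{w}^{(k)}\}$ is bounded.

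\textbf{Main obstacle.} The delicate point is the index bookkeeping. Lemmas~\ref{lem:bregman_identity} and~\ref{lem:sufficient_descent} hold only for $k\ge1$, so the descent chain must start at $\mathcal{L}_\rho^1$, not $\mathcal{L}_\rho^0$. One must also confirm that $\mathcal{L}_\rho^1$ is finite. This holds because the proximal step returns a finite point, as shown in Section~\ref{sec:prox-l1sq-l2sq}.
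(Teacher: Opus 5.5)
Your proposal is correct and follows the paper's proof essentially step for step. You use the same monotonicity of $\mathcal{L}_\rho^k$ for $k\ge1$ from Lemma~\ref{lem:sufficient_descent}, the same lower bound \eqref{eq:al_geometric_bound1} together with $F_\lambda\ge0$ and $\rho>L_f$ to control $F_\lambda(\bm{z}^{(k)})$ and $\|\bm{x}^{(k)}-\bm{z}^{(k)}\|_2$, coercivity to bound $\{\bm{z}^{(k)}\}$, the dual mapping \eqref{eq:dual_mapping} to bound $\{\bm{u}^{(k)}\}_{k\ge1}$, and a separate treatment of the initial term.
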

\begin{proof}
	By Theorem~\ref{thm:existence-regularized-wdsn}, $F_\lambda=f+\lambda\mathcal{R}_\eta$ is coercive.
	By Lemma~\ref{lem:sufficient_descent}, $\{\mathcal{L}_\rho^k\}_{k\ge1}$ is nonincreasing. Since $0\le\eta\le1$, one has $\mathcal{R}_\eta(\bm{x})\ge0$ for every $\bm{x}$, and hence $F_\lambda\ge0$. Combining this with \eqref{eq:al_geometric_bound1} and $\rho>\sqrt{2}L_f>L_f$ gives
	\begin{equation*}
		0\le F_\lambda(\bm{z}^{(k)})+\frac{\rho-L_f}{2}\|\bm{x}^{(k)}-\bm{z}^{(k)}\|_2^2
		\le \mathcal{L}_\rho^k
		\le \mathcal{L}_\rho^1,\qquad k\ge1 .
	\end{equation*}
	Therefore $\{F_\lambda(\bm{z}^{(k)})\}$ is bounded above, and the coercivity of $F_\lambda$ implies that $\{\bm{z}^{(k)}\}$ is bounded. The same estimate bounds $\{\bm{x}^{(k)}-\bm{z}^{(k)}\}$, so $\{\bm{x}^{(k)}\}$ is bounded. Finally, $\rho\bm{u}^{(k)}=-\nabla f(\bm{x}^{(k)})$ and the affine form of $\nabla f$ imply that $\{\bm{u}^{(k)}\}_{k\ge1}$ is bounded. Adding the finite initial term, $\{\bm{w}^{(k)}\}$ is bounded.
\end{proof}

\begin{theorem}[Subsequential convergence]
	\label{thm:subsequential_convergence_admm}
	Suppose that the sequence $\{\bm{w}^{(k)}\}$ is generated by Algorithm~\ref{alg:admm_regularized}. Assume that the hypotheses of Theorem~\ref{thm:existence-regularized-wdsn} hold and that $\rho>\sqrt{2}L_f$. Then the following assertions hold:
	\begin{enumerate}[(i)]
		\item The sequence $\{\bm{w}^{(k)}\}$ has at least one accumulation point.
		
		\item The iterates satisfy $\|\bm{x}^{(k+1)}-\bm{x}^{(k)}\|_2\to0$, $\|\bm{z}^{(k+1)}-\bm{z}^{(k)}\|_2\to0$, $\|\bm{u}^{(k+1)}-\bm{u}^{(k)}\|_2\to0$, and $\|\bm{x}^{(k)}-\bm{z}^{(k)}\|_2\to0$.
		
		\item For every accumulation point $(\bar{\bm{x}},\bar{\bm{z}},\bar{\bm{u}})$, we have $\bar{\bm{x}}=\bar{\bm{z}}$, $\nabla f(\bar{\bm{x}})+\rho\bar{\bm{u}}=\bm{0}$, and $\bm{0}\in\partial g(\bar{\bm{z}})-\rho\bar{\bm{u}}$. Consequently, $\bar{\bm{x}}=\bar{\bm{z}}$ is a stationary point of \eqref{eq:regularized-wdsn-model}, namely
		\begin{equation*}
			\bm{0} \in \bm{A}^\top(\bm{A}\bar{\bm{x}}-\bm{b}) +2\lambda\|\bar{\bm{x}}\|_1\operatorname{SGN}(\bar{\bm{x}}) -2\lambda\eta\bar{\bm{x}} .
		\end{equation*}
	\end{enumerate}
\end{theorem}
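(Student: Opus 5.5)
The proof follows the standard nonconvex ADMM template, built on Lemmas~\ref{lem:sufficient_descent}, \ref{lem:bregman_boundedness} and \ref{lem:bregman_identity}.

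\emph{Part (i).} By Lemma~\ref{lem:bregman_boundedness}, $\{\bm{w}^{(k)}\}$ is bounded, so Bolzano--Weierstrass yields an accumulation point.

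\emph{Part (ii).} By Lemma~\ref{lem:sufficient_descent}, $\{\mathcal{L}_\rho^k\}_{k\ge1}$ is nonincreasing. By \eqref{eq:al_geometric_bound1} together with $F_\lambda\ge0$ and $\rho>L_f$, it is also bounded below by $0$. Telescoping \eqref{eq:descent_ineq} then gives
\[
\sum_{k\ge1}\Bigl(\frac{\rho}{2}-\frac{L_f^2}{\rho}\Bigr)\|\bm{x}^{(k+1)}-\bm{x}^{(k)}\|_2^2\le\mathcal{L}_\rho^1<\infty .
\]
The coefficient is positive because $\rho>\sqrt2L_f$, so $\|\bm{x}^{(k+1)}-\bm{x}^{(k)}\|_2\to0$. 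The remaining limits follow in turn:
\begin{itemize}
\item From \eqref{eq:dual_bound}, $\|\bm{u}^{(k+1)}-\bm{u}^{(k)}\|_2\le (L_f/\rho)\|\bm{x}^{(k+1)}-\bm{x}^{(k)}\|_2\to0$.
\item The dual update gives $\bm{x}^{(k+1)}-\bm{z}^{(k+1)}=\bm{u}^{(k+1)}-\bm{u}^{(k)}\to0$.
\item Finally, $\bm{z}^{(k+1)}-\bm{z}^{(k)}=(\bm{x}^{(k+1)}-\bm{x}^{(k)})-(\bm{u}^{(k+1)}-\bm{u}^{(k)})+(\bm{u}^{(k)}-\bm{u}^{(k-1)})\to0$.
\end{itemize}

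\emph{Part (iii).} Let $\bm{w}^{(k_j)}\to(\bar{\bm{x}},\bar{\bm{z}},\bar{\bm{u}})$. Since $\bm{x}^{(k)}-\bm{z}^{(k)}\to0$, we get $\bar{\bm{x}}=\bar{\bm{z}}$. Passing to the limit in \eqref{eq:dual_mapping}, using continuity of $\nabla f$, gives $\nabla f(\bar{\bm{x}})+\rho\bar{\bm{u}}=\bm{0}$.

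The main obstacle is the inclusion $\bm{0}\in\partial g(\bar{\bm{z}})-\rho\bar{\bm{u}}$, because $g$ is nonconvex. I would avoid limiting-subdifferential closedness arguments and instead work with the explicit structure of $\mathcal{R}_\eta$. Write $g=\lambda(\|\cdot\|_1^2-\eta\|\cdot\|_2^2)$, where $\|\cdot\|_1^2$ is convex with subdifferential $2\|\bm{z}\|_1\operatorname{SGN}(\bm{z})$ and $-\eta\|\cdot\|_2^2$ is smooth. Then $\partial g(\bm{z})=2\lambda\|\bm{z}\|_1\operatorname{SGN}(\bm{z})-2\lambda\eta\bm{z}$, which is a sum rule valid for the Fréchet/limiting subdifferential since one term is $C^1$. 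The $\bm{z}$-update is a global minimization, so Fermat's rule gives
\[
\rho(\bm{x}^{(k)}+\bm{u}^{(k)}-\bm{z}^{(k+1)})\in 2\lambda\|\bm{z}^{(k+1)}\|_1\operatorname{SGN}(\bm{z}^{(k+1)})-2\lambda\eta\bm{z}^{(k+1)}.
\]
Along $k_j-1$, the left side tends to $\rho\bar{\bm{u}}$, using $\bm{x}^{(k_j-1)}-\bm{z}^{(k_j)}\to0$ (which follows from part (ii)) and $\bm{u}^{(k_j-1)}\to\bar{\bm{u}}$. Also $\bm{z}^{(k_j)}\to\bar{\bm{z}}$. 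The set-valued map $\bm{z}\mapsto\operatorname{SGN}(\bm{z})$ has closed graph: if $\bar z_i\neq0$, the sign is eventually constant, and if $\bar z_i=0$, the limit still lies in $[-1,1]$. The multiplier $\|\bm{z}\|_1$ is continuous. Hence the limit inclusion $\rho\bar{\bm{u}}\in2\lambda\|\bar{\bm{z}}\|_1\operatorname{SGN}(\bar{\bm{z}})-2\lambda\eta\bar{\bm{z}}$ holds. Substituting $\rho\bar{\bm{u}}=-\bm{A}^\top(\bm{A}\bar{\bm{x}}-\bm{b})$ yields the stated stationarity condition. If $\bar{\bm{z}}=\bm{0}$, the term $2\lambda\|\bar{\bm{z}}\|_1\operatorname{SGN}(\bar{\bm{z}})$ is $\{\bm{0}\}$, and the inclusion still holds because the left side vanishes in the limit.
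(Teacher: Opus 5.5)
Your proposal is correct and follows essentially the same route as the paper. Part (i) comes from boundedness via Lemma~\ref{lem:bregman_boundedness}, and part (ii) from telescoping the sufficient-descent inequality together with the dual bound, the dual update and the $\bm{z}$-difference identity. Part (iii) passes to the limit in the optimality condition of the $\bm{z}$-subproblem.

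The only difference is minor. The paper simply cites closedness of the limiting-subdifferential graph (using continuity of $g$). You instead verify that closedness directly from $\partial g(\bm{z})=2\lambda\|\bm{z}\|_1\operatorname{SGN}(\bm{z})-2\lambda\eta\bm{z}$ and the closed graph of $\operatorname{SGN}$, and you index the limit along $k_j-1$ rather than $k_j+1$. Both work, and your version is slightly more self-contained.
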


\begin{proof}
	(i) By Lemma~\ref{lem:bregman_boundedness}, the sequence $\{\bm{w}^{(k)}\}$ is bounded. Since the ambient space is finite-dimensional, the Bolzano--Weierstrass theorem gives at least one accumulation point.
	
	(ii) By Lemma~\ref{lem:sufficient_descent}, for every $k\ge1$, one has
	\begin{equation*}
		\mathcal{L}_\rho^{k+1}
		\le
		\mathcal{L}_\rho^k
		-\left(\frac{\rho}{2}-\frac{L_f^2}{\rho}\right)
		\|\bm{x}^{(k+1)}-\bm{x}^{(k)}\|_2^2 .
	\end{equation*}
	The coefficient is positive because $\rho>\sqrt{2}L_f$. Moreover, \eqref{eq:al_geometric_bound1}, $F_\lambda\ge0$, and $\rho>L_f$ imply that $\mathcal{L}_\rho^k$ is bounded from below for $k\ge1$. Summing the descent inequality over $k$ therefore gives
$
		\sum_{k=1}^{\infty}
		\|\bm{x}^{(k+1)}-\bm{x}^{(k)}\|_2^2
		<+\infty .
$
	Hence $\|\bm{x}^{(k+1)}-\bm{x}^{(k)}\|_2\to0$. The dual estimate \eqref{eq:dual_bound} gives $\|\bm{u}^{(k+1)}-\bm{u}^{(k)}\|_2\to0$. The dual update gives $\bm{x}^{(k+1)}-\bm{z}^{(k+1)}=\bm{u}^{(k+1)}-\bm{u}^{(k)}$, and hence $\|\bm{x}^{(k)}-\bm{z}^{(k)}\|_2\to0$. Finally, for $k\ge1$, we have
	\begin{equation*}
		\bm{z}^{(k+1)}-\bm{z}^{(k)}
		=
		\bm{x}^{(k+1)}-\bm{x}^{(k)}
		-
		\left[
		(\bm{u}^{(k+1)}-\bm{u}^{(k)})
		-
		(\bm{u}^{(k)}-\bm{u}^{(k-1)})
		\right].
	\end{equation*}
	Therefore $\|\bm{z}^{(k+1)}-\bm{z}^{(k)}\|_2\to0$.
	
	(iii) Let $(\bm{x}^{(k_j)},\bm{z}^{(k_j)},\bm{u}^{(k_j)})\to(\bar{\bm{x}},\bar{\bm{z}},\bar{\bm{u}})$. By (ii), $\bar{\bm{x}}=\bar{\bm{z}}$. Passing to the limit in \eqref{eq:dual_mapping} along this subsequence yields $\nabla f(\bar{\bm{x}})+\rho\bar{\bm{u}}=\bm{0}$. The optimality condition of the $\bm{z}$-subproblem is
	\begin{equation*}
		\bm{0}\in\partial g(\bm{z}^{(k+1)})-\rho(\bm{x}^{(k)}-\bm{z}^{(k+1)}+\bm{u}^{(k)}).
	\end{equation*}
	Using (ii), we have $\bm{z}^{(k_j+1)}\to\bar{\bm{z}}$. Since $g$ is continuous and the graph of the limiting subdifferential is closed, passing to the limit along the same subsequence gives
	\begin{equation*}
		\bm{0}\in\partial g(\bar{\bm{z}})-\rho(\bar{\bm{x}}-\bar{\bm{z}}+\bar{\bm{u}}) =\partial g(\bar{\bm{z}})-\rho\bar{\bm{u}}.
	\end{equation*}
	Combining the two limiting relations gives $\bm{0}\in\nabla f(\bar{\bm{x}})+\partial g(\bar{\bm{x}})$. Since $\nabla f(\bar{\bm{x}})=\bm{A}^\top(\bm{A}\bar{\bm{x}}-\bm{b})$ and $\partial g(\bar{\bm{x}})=2\lambda\|\bar{\bm{x}}\|_1\operatorname{SGN}(\bar{\bm{x}})-2\lambda\eta\bar{\bm{x}}$, the stated stationarity condition follows.
\end{proof}

\begin{lemma}[Subgradient bound]
	\label{lem:subgrad_bound}
	Let $\{\bm{w}^{(k)}\}$ be generated by Algorithm~\ref{alg:admm_regularized}. Then, for every $k\ge1$, one has
$
		\operatorname{dist}\bigl(\bm{0},\partial\mathcal{L}_\rho(\bm{w}^{(k+1)})\bigr) \le C_\rho\|\bm{x}^{(k+1)}-\bm{x}^{(k)}\|_2 ,
$
	where one may take $C_\rho:=\sqrt{2L_f^2+(\rho+L_f)^2}$. If $\rho\bm{u}^{(0)}=-\nabla f(\bm{x}^{(0)})$, then the same estimate also holds for $k=0$.
\end{lemma}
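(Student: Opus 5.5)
We bound the distance by exhibiting one explicit element of $\partial\mathcal{L}_\rho(\bm{w}^{(k+1)})$ whose norm is controlled. Since $\mathcal{L}_\rho(\bm{x},\bm{z},\bm{u})=f(\bm{x})+g(\bm{z})+\rho\langle\bm{u},\bm{x}-\bm{z}\rangle+\frac{\rho}{2}\|\bm{x}-\bm{z}\|_2^2$ is the sum of the smooth part in $(\bm{x},\bm{u})$ coupling and a separable nonsmooth term $g(\bm{z})$, the limiting subdifferential splits blockwise:
\begin{equation*}
\partial\mathcal{L}_\rho(\bm{w})=\Bigl(\nabla f(\bm{x})+\rho\bm{u}+\rho(\bm{x}-\bm{z}),\ \partial g(\bm{z})-\rho\bm{u}-\rho(\bm{x}-\bm{z}),\ \rho(\bm{x}-\bm{z})\Bigr).
\end{equation*}
We then evaluate each block at $\bm{w}^{(k+1)}$ using the optimality conditions of the three updates in \eqref{eq:regularized-admm}.

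For the $\bm{x}$-block, Lemma~\ref{lem:bregman_identity} gives $\nabla f(\bm{x}^{(k+1)})+\rho\bm{u}^{(k+1)}=\bm{0}$, so this component equals $\rho(\bm{x}^{(k+1)}-\bm{z}^{(k+1)})=\rho(\bm{u}^{(k+1)}-\bm{u}^{(k)})$. For the $\bm{u}$-block the same quantity appears. For the $\bm{z}$-block, the $\bm{z}$-subproblem optimality yields $\bm{\xi}^{(k+1)}:=\rho(\bm{x}^{(k)}-\bm{z}^{(k+1)}+\bm{u}^{(k)})\in\partial g(\bm{z}^{(k+1)})$. Choosing this element, the $\bm{z}$-component becomes
\begin{equation*}
\rho(\bm{x}^{(k)}-\bm{z}^{(k+1)}+\bm{u}^{(k)})-\rho\bm{u}^{(k+1)}-\rho(\bm{x}^{(k+1)}-\bm{z}^{(k+1)}) = \rho(\bm{x}^{(k)}-\bm{x}^{(k+1)})+\rho(\bm{u}^{(k)}-\bm{u}^{(k+1)}),
\end{equation*}
after simplifying with $\bm{u}^{(k+1)}-\bm{u}^{(k)}=\bm{x}^{(k+1)}-\bm{z}^{(k+1)}$.

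Next we use the dual estimate \eqref{eq:dual_bound}, valid for $k\ge1$ because \eqref{eq:dual_mapping} holds at both indices $k$ and $k+1$: $\rho\|\bm{u}^{(k+1)}-\bm{u}^{(k)}\|_2\le L_f\|\bm{x}^{(k+1)}-\bm{x}^{(k)}\|_2$. Hence the $\bm{x}$- and $\bm{u}$-blocks each have norm at most $L_f\|\bm{x}^{(k+1)}-\bm{x}^{(k)}\|_2$, and the $\bm{z}$-block has norm at most $(\rho+L_f)\|\bm{x}^{(k+1)}-\bm{x}^{(k)}\|_2$ by the triangle inequality. Summing squares gives the Euclidean norm of the chosen element bounded by $\sqrt{2L_f^2+(\rho+L_f)^2}\,\|\bm{x}^{(k+1)}-\bm{x}^{(k)}\|_2$, which is the claim with $C_\rho$ as stated. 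For $k=0$, the extra hypothesis $\rho\bm{u}^{(0)}=-\nabla f(\bm{x}^{(0)})$ supplies \eqref{eq:dual_mapping} at index $0$, so the same chain applies.

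The only delicate step is the justification of the blockwise subdifferential formula, since $g$ is nonconvex and $\partial$ is the limiting subdifferential. We will invoke the standard sum rule: for $\Phi=h+\psi$ with $h$ continuously differentiable, $\partial\Phi=\nabla h+\partial\psi$. Separability of $g(\bm{z})$ from the $(\bm{x},\bm{u})$ variables then gives the product structure. We will also note that $\bm{\xi}^{(k+1)}$ is a limiting (indeed Fr\'echet) subgradient because $\bm{z}^{(k+1)}$ is a global minimizer of $g+\frac{\rho}{2}\|\cdot-\bm{p}^{(k)}\|_2^2$.
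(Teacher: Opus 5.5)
Your proposal is correct and follows the paper's proof essentially step for step. You use the same element of $\partial\mathcal{L}_\rho(\bm{w}^{(k+1)})$, with $\bm{z}$-component $\rho(\bm{x}^{(k)}-\bm{x}^{(k+1)}+\bm{u}^{(k)}-\bm{u}^{(k+1)})$, the same dual bound coming from $\rho\bm{u}^{(k)}=-\nabla f(\bm{x}^{(k)})$, and the same constant $C_\rho$. Your extra justification of the blockwise limiting subdifferential (via the smooth-plus-lsc sum rule, separability of $g$, and Fermat's rule at the global $\bm{z}$-minimizer) fills in a step that the paper leaves implicit.
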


\begin{proof}
	The first-order optimality condition of the $\bm{x}$-subproblem is
	\begin{equation*}
		\nabla f(\bm{x}^{(k+1)}) +\rho(\bm{x}^{(k+1)}-\bm{z}^{(k+1)}+\bm{u}^{(k)}) = \bm{0} .
	\end{equation*}
	Together with the dual update $\bm{u}^{(k+1)}=\bm{u}^{(k)}+\bm{x}^{(k+1)}-\bm{z}^{(k+1)}$, this gives $\rho\bm{u}^{(k+1)}=-\nabla f(\bm{x}^{(k+1)})$. Hence, for $k\ge1$, one has
	\begin{equation*}
		\rho\|\bm{u}^{(k+1)}-\bm{u}^{(k)}\|_2 = \|\nabla f(\bm{x}^{(k+1)})-\nabla f(\bm{x}^{(k)})\|_2 \le L_f\|\bm{x}^{(k+1)}-\bm{x}^{(k)}\|_2 .
	\end{equation*}
	
	We now construct an element of $\partial\mathcal{L}_\rho(\bm{w}^{(k+1)})$. The $\bm{x}$- and $\bm{u}$-components satisfy
	\begin{equation*}
		\bm{d}_x^{(k+1)} := \nabla_{\bm{x}}\mathcal{L}_\rho(\bm{w}^{(k+1)}) = \rho(\bm{u}^{(k+1)}-\bm{u}^{(k)}),
	\end{equation*}
	and
$
		\bm{d}_u^{(k+1)} := \nabla_{\bm{u}}\mathcal{L}_\rho(\bm{w}^{(k+1)}) = \rho(\bm{x}^{(k+1)}-\bm{z}^{(k+1)}) = \rho(\bm{u}^{(k+1)}-\bm{u}^{(k)}).
$
	For the $\bm{z}$-component, the optimality condition of the $\bm{z}$-subproblem gives an element $\bm{v}_z^{(k+1)}\in\partial g(\bm{z}^{(k+1)})$ such that $\bm{v}_z^{(k+1)} + \rho(\bm{z}^{(k+1)}-\bm{x}^{(k)}-\bm{u}^{(k)}) = \bm{0}$. Therefore,
	\begin{equation*}
		\begin{aligned}
			\bm{d}_z^{(k+1)} &:= \bm{v}_z^{(k+1)} -\rho(\bm{x}^{(k+1)}-\bm{z}^{(k+1)}+\bm{u}^{(k+1)}) \\
			&= \rho(\bm{x}^{(k)}-\bm{x}^{(k+1)}+\bm{u}^{(k)}-\bm{u}^{(k+1)}) \in \partial_{\bm{z}}\mathcal{L}_\rho(\bm{w}^{(k+1)}).
		\end{aligned}
	\end{equation*}
	Thus $\bm{d}^{(k+1)}:=(\bm{d}_x^{(k+1)},\bm{d}_z^{(k+1)},\bm{d}_u^{(k+1)}) \in\partial\mathcal{L}_\rho(\bm{w}^{(k+1)})$. Using the preceding dual bound,
	\begin{equation*}
		\|\bm{d}_x^{(k+1)}\|_2\le L_f\|\bm{x}^{(k+1)}-\bm{x}^{(k)}\|_2,\qquad \|\bm{d}_u^{(k+1)}\|_2\le L_f\|\bm{x}^{(k+1)}-\bm{x}^{(k)}\|_2,
	\end{equation*}
	and
	\begin{equation*}
		\|\bm{d}_z^{(k+1)}\|_2 \le \rho\|\bm{x}^{(k+1)}-\bm{x}^{(k)}\|_2 + \rho\|\bm{u}^{(k+1)}-\bm{u}^{(k)}\|_2 \le (\rho+L_f)\|\bm{x}^{(k+1)}-\bm{x}^{(k)}\|_2 .
	\end{equation*}
	Consequently,
$
		\operatorname{dist}\bigl(\bm{0},\partial\mathcal{L}_\rho(\bm{w}^{(k+1)})\bigr) \le \|\bm{d}^{(k+1)}\|_2 \le \sqrt{2L_f^2+(\rho+L_f)^2}\, \|\bm{x}^{(k+1)}-\bm{x}^{(k)}\|_2 .
$
	This proves the claim.
\end{proof}

With the sufficient descent and subgradient bounds established, we are now ready to state the global convergence theorem.

\begin{theorem}[Global Convergence]
	\label{thm:global_convergence}
	Suppose that the sequence $\{\bm{w}^{(k)}\}$ is generated by Algorithm \ref{alg:admm_regularized}, with each $\bm{z}$-subproblem solved globally. Assume that the hypotheses of Theorem~\ref{thm:existence-regularized-wdsn} hold and that $\rho > \sqrt{2}L_f$. Then the sequence has finite length
	\begin{equation*}
		\sum_{k=0}^{\infty} \left\|(\bm{x}^{(k+1)}-\bm{x}^{(k)},\bm{z}^{(k+1)}-\bm{z}^{(k)},\bm{u}^{(k+1)}-\bm{u}^{(k)})\right\|_2<+\infty .
	\end{equation*}
	Consequently, the whole sequence converges to a stationary point of the split problem \eqref{eq:regularized-split}. In particular, the common primal limit is a stationary point of \eqref{eq:regularized-wdsn-model}.
\end{theorem}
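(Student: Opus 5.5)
We prove the theorem with the standard Kurdyka--Łojasiewicz (KŁ) framework of Attouch--Bolte--Svaiter, applied to the augmented Lagrangian $\mathcal{L}_\rho$ along the iterates $\bm{w}^{(k)}$.

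\textbf{Verifying the three ingredients.}
\begin{enumerate}
\item \emph{Sufficient decrease.} Lemma~\ref{lem:sufficient_descent} gives this in terms of $\|\bm{x}^{(k+1)}-\bm{x}^{(k)}\|_2^2$. The decrease constant $a:=\rho/2-L_f^2/\rho$ is positive because $\rho>\sqrt{2}L_f$.
\item \emph{Relative error.} Lemma~\ref{lem:subgrad_bound} gives $\operatorname{dist}(\bm{0},\partial\mathcal{L}_\rho(\bm{w}^{(k+1)}))\le C_\rho\|\bm{x}^{(k+1)}-\bm{x}^{(k)}\|_2$ for $k\ge1$.
\item \emph{Continuity along convergent subsequences.} $\mathcal{L}_\rho$ is continuous, since $\mathcal{R}_\eta$ is. So if $\bm{w}^{(k_j)}\to\bar{\bm{w}}$, then $\mathcal{L}_\rho(\bm{w}^{(k_j)})\to\mathcal{L}_\rho(\bar{\bm{w}})$.
\end{enumerate}
By Lemma~\ref{lem:bregman_boundedness}, the sequence is bounded. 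Let $\Omega$ be its set of accumulation points; it is nonempty and compact. By Theorem~\ref{thm:subsequential_convergence_admm}(ii), $\|\bm{w}^{(k+1)}-\bm{w}^{(k)}\|\to0$, so $\Omega$ is connected. Since $\mathcal{L}_\rho^k$ is nonincreasing and bounded below, it converges to some $\mathcal{L}^*$. By continuity, $\mathcal{L}_\rho\equiv\mathcal{L}^*$ on $\Omega$.

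\textbf{KŁ property.} $\mathcal{L}_\rho$ is semialgebraic: $\|\bm{z}\|_1^2$ is a square of a piecewise linear function, and all other terms are polynomial. Semialgebraic functions satisfy the KŁ property at every point. The uniformized KŁ lemma (Bolte--Sabach--Teboulle) then provides $\epsilon,\zeta>0$ and a concave desingularizer $\varphi$ that serve simultaneously at all points of $\Omega$.

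\textbf{Finite length of the $\bm{x}$-increments.} If $\mathcal{L}_\rho^{K}=\mathcal{L}^*$ for some $K$, the descent inequality forces $\bm{x}^{(k+1)}=\bm{x}^{(k)}$ for $k\ge K$, and the sequence is eventually stationary. Otherwise, for $k$ large, $\operatorname{dist}(\bm{w}^{(k)},\Omega)<\epsilon$ and $\mathcal{L}^*<\mathcal{L}_\rho^k<\mathcal{L}^*+\zeta$. Write $\Delta_k:=\varphi(\mathcal{L}_\rho^k-\mathcal{L}^*)-\varphi(\mathcal{L}_\rho^{k+1}-\mathcal{L}^*)$. Concavity of $\varphi$, the KŁ inequality, and the relative-error bound give
\[
a\|\bm{x}^{(k+1)}-\bm{x}^{(k)}\|_2^2\le C_\rho\|\bm{x}^{(k)}-\bm{x}^{(k-1)}\|_2\,\Delta_k .
\]
Applying $2\sqrt{\alpha\beta}\le\alpha+\beta$ yields
\[
2\|\bm{x}^{(k+1)}-\bm{x}^{(k)}\|_2\le\|\bm{x}^{(k)}-\bm{x}^{(k-1)}\|_2+\frac{C_\rho}{a}\Delta_k .
\]
Summing telescopes the $\Delta_k$ terms, so $\sum_k\|\bm{x}^{(k+1)}-\bm{x}^{(k)}\|_2<\infty$.

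\textbf{Transfer to $\bm{u}$ and $\bm{z}$.} The bound \eqref{eq:dual_bound} gives $\|\bm{u}^{(k+1)}-\bm{u}^{(k)}\|_2\le (L_f/\rho)\|\bm{x}^{(k+1)}-\bm{x}^{(k)}\|_2$. The identity for $\bm{z}^{(k+1)}-\bm{z}^{(k)}$ from the proof of Theorem~\ref{thm:subsequential_convergence_admm}(ii) bounds it by $\bm{x}$- and $\bm{u}$-increments at indices $k$ and $k-1$. Hence all three increment series are summable, and the finitely many initial terms do not affect this. So $\{\bm{w}^{(k)}\}$ is Cauchy and converges to a single point. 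By Theorem~\ref{thm:subsequential_convergence_admm}(iii), that limit is stationary for \eqref{eq:regularized-split}, and its common primal component is stationary for \eqref{eq:regularized-wdsn-model}.

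\textbf{Main obstacle.} The delicate step is justifying the KŁ property and the subdifferential calculus. We need $\partial\mathcal{L}_\rho$ to be the limiting subdifferential, with the separable structure used in Lemma~\ref{lem:subgrad_bound}. This holds because $\mathcal{L}_\rho$ is a smooth function plus $g(\bm{z})$, so the sum rule for limiting subdifferentials applies. Semialgebraicity must also be checked explicitly, for instance by writing $\|\bm{z}\|_1$ as a maximum of finitely many linear forms.
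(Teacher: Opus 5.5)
Your proposal is correct and follows the paper's proof essentially step for step: the descent inequality of Lemma~\ref{lem:sufficient_descent}, the relative-error bound of Lemma~\ref{lem:subgrad_bound}, the uniformized KL property of the semi-algebraic $\mathcal{L}_\rho$ on the accumulation set, the same case split according to whether $\mathcal{L}_\rho^K$ equals the limit value for some $K$, the same $2\sqrt{\alpha\beta}\le\alpha+\beta$ telescoping (your $\Delta_k$ is the paper's $\Theta_k$ shifted by one index), and the transfer to the $\bm{u}$- and $\bm{z}$-increments through \eqref{eq:dual_bound}. The differences are cosmetic: you get stationarity of the limit by citing Theorem~\ref{thm:subsequential_convergence_admm}(iii) rather than reapplying closedness of the limiting-subdifferential graph, and the connectedness of $\Omega$ you record is not needed for the uniformized KL lemma.
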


\begin{proof}
	Set $a:=\frac{\rho}{2}-\frac{L_f^2}{\rho}>0$ and $s_k:=\|\bm{x}^{(k+1)}-\bm{x}^{(k)}\|_2$. By Lemma~\ref{lem:sufficient_descent}, for every $k\ge1$, one has
	\begin{equation}
		\label{eq:global-kl-descent}
		\mathcal{L}_\rho^k-\mathcal{L}_\rho^{k+1} \ge a s_k^2 .
	\end{equation}
	On the other hand, \eqref{eq:al_geometric_bound1}, $F_\lambda\ge0$, and $\rho>L_f$ show that $\{\mathcal{L}_\rho^k\}_{k\ge1}$ is bounded from below. Thus $\mathcal{L}_\rho^k$ converges to some finite value $\mathcal{L}_\rho^\infty$, and \eqref{eq:global-kl-descent} implies $\sum_{k=1}^{\infty}s_k^2<+\infty$. Hence $s_k^2\to0$, and therefore
	$s_k\to0$. Lemma~\ref{lem:subgrad_bound} gives the relative-error estimate
	\begin{equation}
		\label{eq:global-kl-relative-error}
		\operatorname{dist}\bigl(\bm{0},\partial\mathcal{L}_\rho(\bm{w}^{(k+1)})\bigr) \le C_\rho s_k, \qquad k\ge1 .
	\end{equation}
	The augmented Lagrangian function $\mathcal{L}_\rho$ is continuous and semi-algebraic, and therefore satisfies the Kurdyka--\L{}ojasiewicz property. Moreover, Lemma~\ref{lem:bregman_boundedness} gives the boundedness of $\{\bm{w}^{(k)}\}$. Let $\Omega$ denote its set of accumulation points. Then $\Omega$ is nonempty and compact, $\operatorname{dist}(\bm{w}^{(k)},\Omega)\to0$, and, by continuity of $\mathcal{L}_\rho$, $\mathcal{L}_\rho(\bar{\bm{w}}) = \mathcal{L}_\rho^\infty$ for $\bar{\bm{w}}\in\Omega$. The uniformized KL property on $\Omega$ yields constants $\tau>0$, $\nu>0$, and a concave function $\varphi\in C^1(0,\nu)$ with $\varphi'>0$, such that
	\begin{equation}
		\label{eq:global-uniform-kl}
		\varphi'\bigl(\mathcal{L}_\rho(\bm{w})-\mathcal{L}_\rho^\infty\bigr) \operatorname{dist}\bigl(\bm{0},\partial\mathcal{L}_\rho(\bm{w})\bigr) \ge1
	\end{equation}
	whenever $\operatorname{dist}(\bm{w},\Omega)<\tau$ and $0<\mathcal{L}_\rho(\bm{w})-\mathcal{L}_\rho^\infty<\nu$.

	If $\mathcal{L}_\rho^K=\mathcal{L}_\rho^\infty$ for some $K\ge1$, then the monotonicity of $\{\mathcal{L}_\rho^k\}$ implies $\mathcal{L}_\rho^k=\mathcal{L}_\rho^\infty$ for $k\ge K$. Consequently, by \eqref{eq:global-kl-descent},
	\begin{equation*}
		0 \le a s_k^2 \le \mathcal{L}_\rho^k-\mathcal{L}_\rho^{k+1} =0, \qquad k\ge K ,
	\end{equation*}
	and hence $\bm{x}^{(k+1)}=\bm{x}^{(k)}$ for $k\ge K$. Using \eqref{eq:dual_bound}, we further obtain $\bm{u}^{(k+1)}=\bm{u}^{(k)}$ for $k\ge K$. The dual update then gives $\bm{x}^{(k+1)}-\bm{z}^{(k+1)} = \bm{u}^{(k+1)}-\bm{u}^{(k)} = \bm{0}$ for $k\ge K$. Thus, with $K_0:=K+1$, $\bm{w}^{(k)}=\bm{w}^{(K_0)}$ for $k\ge K_0$. It follows that $\sum_{k=K_0}^{\infty} \|\bm{w}^{(k+1)}-\bm{w}^{(k)}\|_2 =0$, and the finite-length property follows. Moreover, \eqref{eq:global-kl-relative-error} yields $\operatorname{dist}\bigl(\bm{0},\partial\mathcal{L}_\rho(\bm{w}^{(k+1)})\bigr) \le C_\rho s_k=0$ for $k\ge K$, so $\bm{w}^{(K_0)}$ is a stationary point of the split problem.
	
	Assume now that, for some $N_0\ge1$, $\mathcal{L}_\rho^k>\mathcal{L}_\rho^\infty$ for $k\ge N_0$. Choose $N\ge N_0$ such that $\operatorname{dist}(\bm{w}^{(k+1)},\Omega)<\tau$, $0<\mathcal{L}_\rho^{k+1}-\mathcal{L}_\rho^\infty<\nu$ for $k\ge N$. Then \eqref{eq:global-uniform-kl} applies to $\bm{w}^{(k+1)}$, and together with \eqref{eq:global-kl-relative-error} gives
	\begin{equation}
		\label{eq:kl-relative-error-x}
		1 \le C_\rho \varphi'\bigl(\mathcal{L}_\rho^{k+1}-\mathcal{L}_\rho^\infty\bigr) s_k, \qquad k\ge N .
	\end{equation}
	In particular, $s_k>0$ for all $k\ge N$. Define $\Theta_k := \varphi\bigl(\mathcal{L}_\rho^{k+1}-\mathcal{L}_\rho^\infty\bigr) - \varphi\bigl(\mathcal{L}_\rho^{k+2}-\mathcal{L}_\rho^\infty\bigr)$ for $k\ge N$. By the concavity of $\varphi$, \eqref{eq:global-kl-descent}, and \eqref{eq:kl-relative-error-x}, one has
	\begin{equation*}
		\Theta_k \ge \varphi'\bigl(\mathcal{L}_\rho^{k+1}-\mathcal{L}_\rho^\infty\bigr) \bigl(\mathcal{L}_\rho^{k+1}-\mathcal{L}_\rho^{k+2}\bigr) \ge a \varphi'\bigl(\mathcal{L}_\rho^{k+1}-\mathcal{L}_\rho^\infty\bigr) s_{k+1}^2 \ge \frac{a s_{k+1}^2}{C_\rho s_k}, \qquad k\ge N .
	\end{equation*}
	Hence $s_{k+1}^2 \le \frac{C_\rho}{a}s_k\Theta_k$ for $k\ge N$. Using $2\sqrt{\alpha\beta}\le \alpha+\beta$, we obtain
	\begin{equation}
		\label{eq:kl-one-step-length}
		s_{k+1} \le \frac12s_k + \frac{C_\rho}{2a}\Theta_k, \qquad k\ge N .
	\end{equation}
	Summing \eqref{eq:kl-one-step-length} from $k=N$ to $M$ gives
	\begin{equation*}
		\sum_{j=N+1}^{M+1}s_j \le \frac12s_N + \frac12\sum_{j=N+1}^{M}s_j + \frac{C_\rho}{2a} \sum_{k=N}^{M}\Theta_k .
	\end{equation*}
	Therefore, $\sum_{j=N+1}^{M+1}s_j \le s_N + \frac{C_\rho}{a} \sum_{k=N}^{M}\Theta_k$. By the definition of $\Theta_k$, the sum can be written as
	\begin{equation*}
		\sum_{k=N}^{M}\Theta_k = \varphi\bigl(\mathcal{L}_\rho^{N+1}-\mathcal{L}_\rho^\infty\bigr) - \varphi\bigl(\mathcal{L}_\rho^{M+2}-\mathcal{L}_\rho^\infty\bigr) \le \varphi\bigl(\mathcal{L}_\rho^{N+1}-\mathcal{L}_\rho^\infty\bigr).
	\end{equation*}
	Letting $M\to\infty$ yields
	\begin{equation}
		\label{eq:x-finite-length}
		\sum_{k=0}^{\infty} \|\bm{x}^{(k+1)}-\bm{x}^{(k)}\|_2 <+\infty .
	\end{equation}
	
	It follows from \eqref{eq:dual_bound} and \eqref{eq:x-finite-length} that
	\begin{equation*}
		\sum_{k=1}^{\infty} \|\bm{u}^{(k+1)}-\bm{u}^{(k)}\|_2 \le \frac{L_f}{\rho} \sum_{k=1}^{\infty} \|\bm{x}^{(k+1)}-\bm{x}^{(k)}\|_2 <+\infty .
	\end{equation*}
	Moreover, for $k\ge1$, $\bm{z}^{(k+1)}-\bm{z}^{(k)} = \bm{x}^{(k+1)}-\bm{x}^{(k)} - (\bm{u}^{(k+1)}-\bm{u}^{(k)}) + (\bm{u}^{(k)}-\bm{u}^{(k-1)})$. Thus
	\begin{equation*}
		\begin{aligned}
			\sum_{k=1}^{\infty} \|\bm{z}^{(k+1)}-\bm{z}^{(k)}\|_2 &\le \sum_{k=1}^{\infty} \|\bm{x}^{(k+1)}-\bm{x}^{(k)}\|_2 \\
			&\quad+ \sum_{k=1}^{\infty} \|\bm{u}^{(k+1)}-\bm{u}^{(k)}\|_2 + \sum_{k=1}^{\infty} \|\bm{u}^{(k)}-\bm{u}^{(k-1)}\|_2 <+\infty .
		\end{aligned}
	\end{equation*}
	Consequently, $\sum_{k=0}^{\infty} \|\bm{w}^{(k+1)}-\bm{w}^{(k)}\|_2 <+\infty$. Hence $\{\bm{w}^{(k)}\}$ is a Cauchy sequence and hence converges. Denote its limit by
	$\bar{\bm{w}} := (\bar{\bm{x}},\bar{\bm{z}},\bar{\bm{u}})$. It remains to show that this limit is stationary.
	
	By Lemma~\ref{lem:subgrad_bound}, for every $k\ge1$ there exists $\bm{q}^{(k+1)} \in \partial\mathcal{L}_\rho(\bm{w}^{(k+1)})$ such that $\|\bm{q}^{(k+1)}\|_2 \le C_\rho s_k$. Since $s_k\to0$, $\bm{q}^{(k+1)}\to\bm{0}$. Together with $\bm{w}^{(k+1)}\to\bar{\bm{w}}$, $\mathcal{L}_\rho(\bm{w}^{(k+1)}) \to \mathcal{L}_\rho(\bar{\bm{w}})$, the closedness of the limiting-subdifferential graph gives $\bm{0} \in \partial\mathcal{L}_\rho(\bar{\bm{w}})$. Equivalently, $\nabla f(\bar{\bm{x}}) + \rho(\bar{\bm{x}}-\bar{\bm{z}}+\bar{\bm{u}}) = \bm{0}$, $\bm{0} \in \partial g(\bar{\bm{z}}) - \rho(\bar{\bm{x}}-\bar{\bm{z}}+\bar{\bm{u}})$, and $\bar{\bm{x}}-\bar{\bm{z}} = \bm{0}$. Therefore, $\bar{\bm{x}}=\bar{\bm{z}}$, $\nabla f(\bar{\bm{x}})+\rho\bar{\bm{u}}=\bm{0}$, $\rho\bar{\bm{u}}\in\partial g(\bar{\bm{x}})$. It follows that $\bm{0} \in \nabla f(\bar{\bm{x}})+\partial g(\bar{\bm{x}})$, which is the stationarity condition for \eqref{eq:regularized-wdsn-model}. Hence $\bar{\bm{w}}$ is a stationary point of \eqref{eq:regularized-split}, and $\bar{\bm{x}}$ is a stationary point of \eqref{eq:regularized-wdsn-model}.
\end{proof}

\section{Numerical Experiments}\label{sec-num}

This section evaluates the efficiency of the ADMM algorithm based on the closed-form proximal mapping derived in Section~\ref{sec:prox-l1sq-l2sq}, and compares it with the HV algorithm. The experiments cover both noiseless and noisy compressed sensing problems, with the aim of highlighting the computational advantage brought by exactly solving the full WDSN proximal subproblem.

\subsection{Noiseless Compressed Sensing}
\label{subsec:noiseless-experiments}

We first evaluate the two solvers in noiseless compressed sensing settings. For the problem sizes, we set $M=500,600,\ldots,1000$, $N=10M$, and $s=M/10$, and perform 20 random Monte Carlo trials. The two solvers use the same $\ell_1$ warm start, the same parameters $\eta=1$ and $\lambda=10^{-6}$, the same maximum number of iterations $5N$, and the same stopping tolerance $10^{-8}$; the penalty parameter in ADMM is set to $\rho=10$. We test two types of sensing matrices: oversampled DCT matrices with oversampling factor $F=20$, and Gaussian matrices with correlation parameter $r=0.2$. Figures~\ref{fig:runtime-scaling-summary} and~\ref{fig:runtime-scaling-gaussian-summary} report, respectively, the average number of iterations, the average CPU running time, the CPU running-time scatter plot, and the CPU running time speedup of the two solvers.
\begin{figure}[htbp]
	\centering
	\begin{subfigure}[t]{0.4\textwidth}
		\centering
		\vspace{0pt}
		\includegraphics[width=0.9\linewidth,keepaspectratio]
		{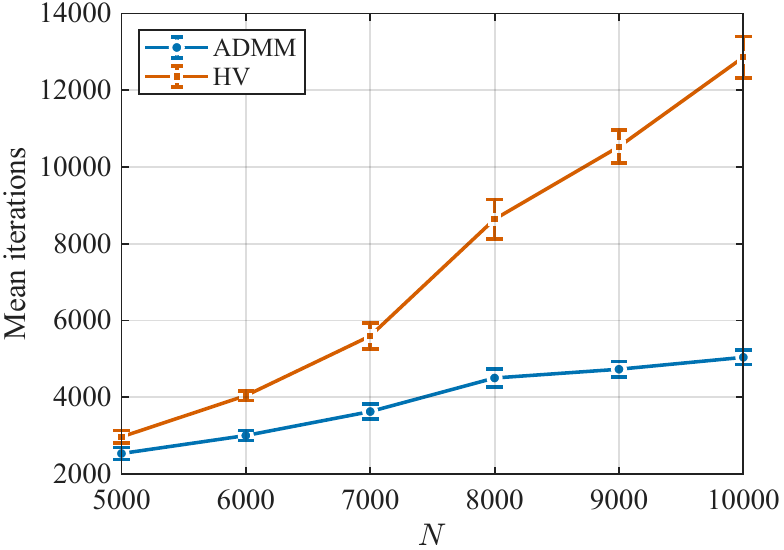}
		\caption{Mean iterations.}
		\label{fig:runtime-scaling-iterations}
	\end{subfigure}
%	\hfill
	\begin{subfigure}[t]{0.4\textwidth}
		\centering
		\vspace{0pt}
		\includegraphics[width=0.9\linewidth,keepaspectratio]{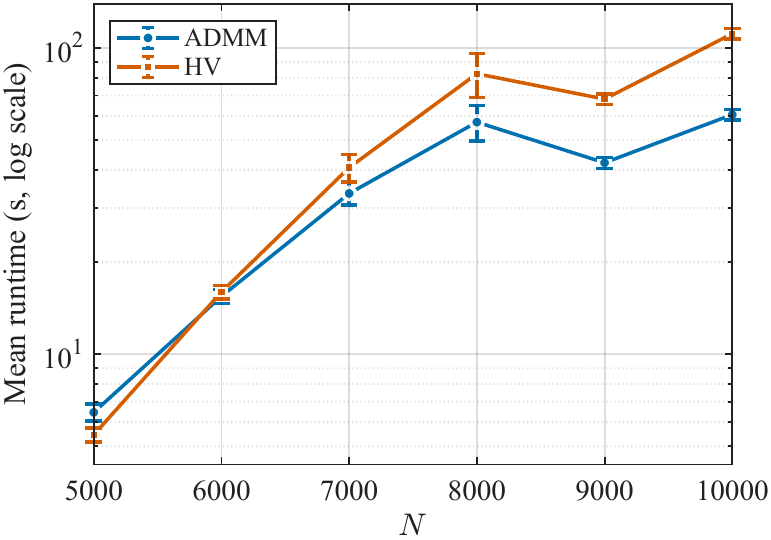}
		\caption{Mean CPU runtime.}
		\label{fig:runtime-scaling-runtime}
	\end{subfigure}

%	\vspace{0.6em}

	\begin{subfigure}[t]{0.4\textwidth}
		\centering
		\vspace{0pt}
		\includegraphics[width=0.9\linewidth,keepaspectratio]{\detokenize{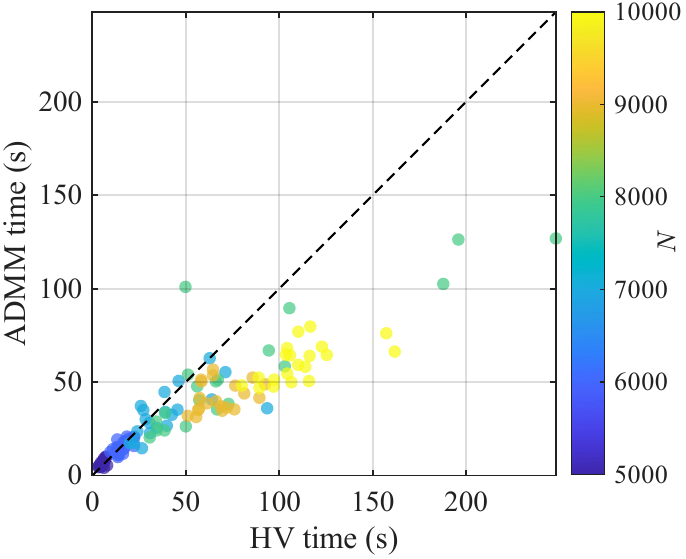}}
		\caption{CPU runtime.}
		\label{fig:runtime-scaling-scatter}
	\end{subfigure}
%	\hfill
	\begin{subfigure}[t]{0.4\textwidth}
		\centering
		\vspace{0pt}
		\includegraphics[width=0.9\linewidth,keepaspectratio]{\detokenize{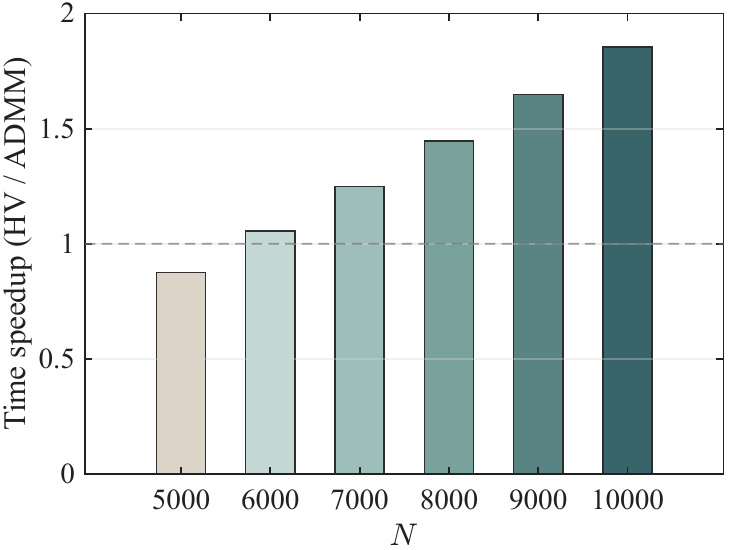}}
		\caption{Speedup.}
		\label{fig:runtime-scaling-speedup}
	\end{subfigure}
	\caption{Runtime and iteration comparison between HV and ADMM for oversampled DCT sensing matrices.}
	\label{fig:runtime-scaling-summary}
\end{figure}

The oversample DCT experiment in Figure~\ref{fig:runtime-scaling-summary} shows a crossover pattern rather than a uniform small-scale advantage. For $N=5000$, the HV solver has lower mean CPU time, although ADMM already requires fewer iterations on average. This indicates that the per-iteration linear-algebra cost of ADMM is not fully amortized at the smallest tested dimension. Starting from $N=6000$, however, ADMM becomes faster, and the CPU running time speedup of the two solvers generally grows with the ambient dimension. At $N=10000$, ADMM reduces the mean iteration count from $12860.6$ to $5039.6$, corresponding to an iteration reduction of about $60.8\%$, and reduces the mean runtime from $111.4$ seconds to $60.6$ seconds. The speedup at this largest dimension is $1.85$. Thus, for oversampled DCT matrices, the exact WDSN proximal module offers a computational advantage once the reduction in outer iterations outweighs the additional cost incurred by solving the ADMM quadratic subproblems.

\begin{figure}[htbp]
	\centering
	\begin{subfigure}[t]{0.4\textwidth}
		\centering
		\vspace{0pt}
		\includegraphics[width=0.9\linewidth,keepaspectratio]{\detokenize{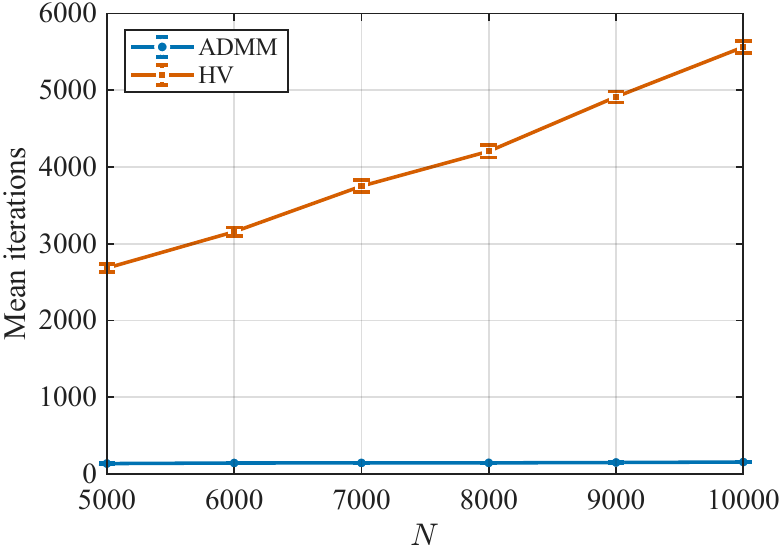}}
		\caption{Mean iterations.}
		\label{fig:runtime-scaling-gaussian-iterations}
	\end{subfigure}
	%	\hfill
	\begin{subfigure}[t]{0.4\textwidth}
		\centering
		\vspace{0pt}
		\includegraphics[width=0.9\linewidth,keepaspectratio]{\detokenize{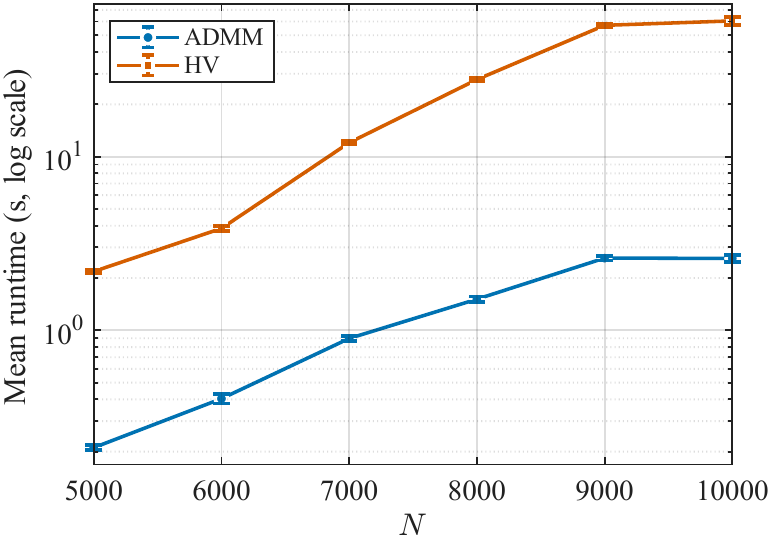}}
		\caption{Mean CPU runtime.}
		\label{fig:runtime-scaling-gaussian-runtime}
	\end{subfigure}
	
	%	\vspace{0.6em}
	
	\begin{subfigure}[t]{0.4\textwidth}
		\centering
		\vspace{0pt}
		\includegraphics[width=0.9\linewidth,keepaspectratio]{\detokenize{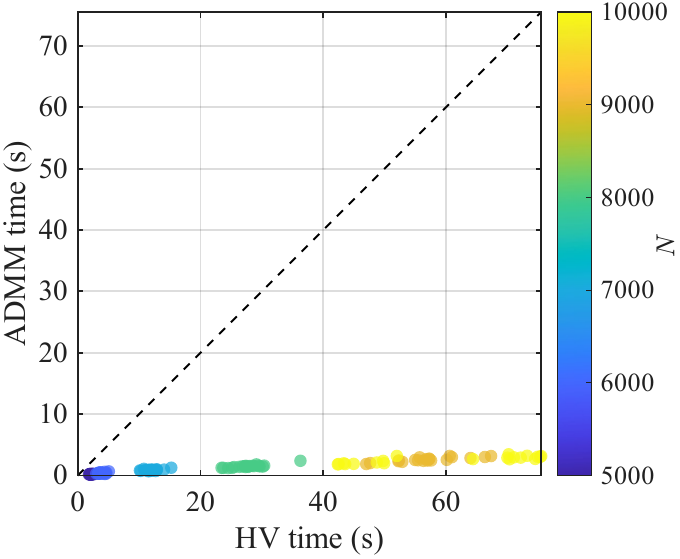}}
		\caption{CPU runtime.}
		\label{fig:runtime-scaling-gaussian-scatter}
	\end{subfigure}
	%	\hfill
	\begin{subfigure}[t]{0.4\textwidth}
		\centering
		\vspace{0pt}
		\includegraphics[width=0.9\linewidth,keepaspectratio]{\detokenize{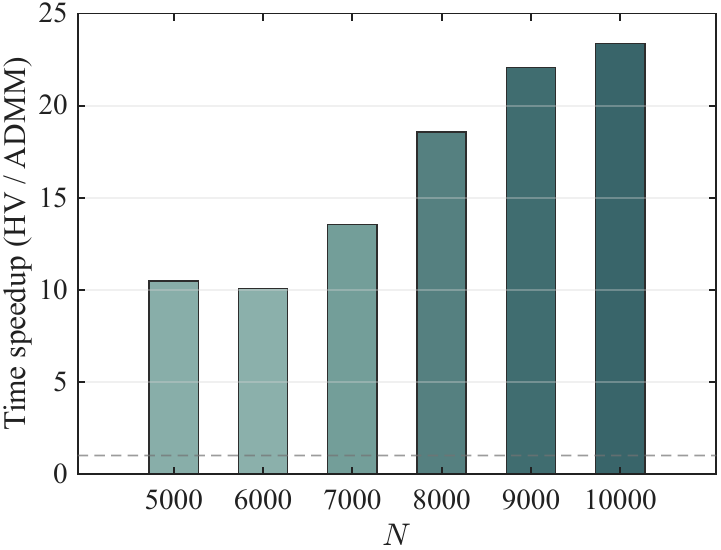}}
		\caption{Speedup.}
		\label{fig:runtime-scaling-gaussian-speedup}
	\end{subfigure}
	\caption{Runtime and iteration comparison between HV and ADMM for Gaussian sensing matrices.}
	\label{fig:runtime-scaling-gaussian-summary}
\end{figure}

The Gaussian experiment in Figure~\ref{fig:runtime-scaling-gaussian-summary} gives a substantially stronger scaling advantage for ADMM. Across all six dimensions, the mean ADMM iteration count remains between $135.3$ and $155.5$, whereas the mean HV iteration count increases from $2683.0$ to $5563.3$. This separation is reflected in CPU time: the mean ADMM time ranges from $0.21$ to $2.60$ seconds, while the mean HV time ranges from $2.18$ to $60.45$ seconds. The paired mean speedup ranges from $10.08$ to $23.36$ and reaches $23.36$ at $N=10000$, where the mean iteration reduction is about $97.2\%$. These results support the algorithmic role of the full WDSN proximal mapping: the HV update treats the negative quadratic term through the smooth gradient step and applies a proximal map only for the squared $\ell_1$ term, whereas the proposed ADMM update applies the closed-form proximal map of the full WDSN penalty in the $\bm{z}$-subproblem.

It remains to verify that the faster ADMM runs do not obtain their speed by terminating at poorer objective values. Tables~\ref{tab:admm-hv-objective-nnz} and~\ref{tab:admm-hv-objective-nnz-gaussian} therefore report the final RWDSN objective values on the same trials. The relative gap (RG) is computed as $|f_{\rm ADMM}-f_{\rm HV}|/\max\{1,|f_{\rm HV}|\}$.
\begin{table}[!htbp] 
	\centering
	\begingroup
	\scriptsize
	\setlength{\tabcolsep}{3.2pt}
	\caption{Comparison of final objective values between HV and ADMM for oversampled DCT sensing matrices. The notation $\mu\pm\sigma$ reports the mean and standard deviation; Max RG reports the worst trial.}
	\label{tab:admm-hv-objective-nnz}
	\begin{tabular}{rrrrccc}
		\toprule
		\multicolumn{3}{c}{Problem} & \multicolumn{4}{c}{Final objective value} \\
		\cmidrule(lr){1-3}\cmidrule(l){4-7}
		$N$ & $M$ & $s$ & ADMM & HV & RG ($\mu\pm\sigma$) & Max RG \\
		\midrule
		5000 & 500 & 50 & 2.686e-4 $\pm$ 6.6e-5 & 2.686e-4 $\pm$ 6.6e-5 & 2.75e-10 $\pm$ 1.08e-10 & 5.20e-10 \\
		6000 & 600 & 60 & 3.659e-4 $\pm$ 9.7e-5 & 3.659e-4 $\pm$ 9.7e-5 & 3.49e-10 $\pm$ 1.10e-10 & 5.74e-10 \\
		7000 & 700 & 70 & 4.372e-4 $\pm$ 1.3e-4 & 4.372e-4 $\pm$ 1.3e-4 & 4.74e-10 $\pm$ 1.93e-10 & 9.70e-10 \\
		8000 & 800 & 80 & 6.206e-4 $\pm$ 1.9e-4 & 6.206e-4 $\pm$ 1.9e-4 & 6.64e-10 $\pm$ 2.62e-10 & 1.21e-9 \\
		9000 & 900 & 90 & 7.071e-4 $\pm$ 1.9e-4 & 7.071e-4 $\pm$ 1.9e-4 & 6.89e-10 $\pm$ 3.48e-10 & 1.67e-9 \\
		10000 & 1000 & 100 & 8.658e-4 $\pm$ 2.1e-4 & 8.658e-4 $\pm$ 2.1e-4 & 7.36e-10 $\pm$ 2.67e-10 & 1.11e-9 \\
		\bottomrule
	\end{tabular}
	\endgroup
\end{table}

\begin{table}[!htbp]
	\centering
	\begingroup
	\scriptsize
	\setlength{\tabcolsep}{3.2pt}
	\caption{Comparison of final objective values between HV and ADMM for Gaussian sensing matrices. The notation $\mu\pm\sigma$ reports the mean and standard deviation; Max RG reports the worst trial.}
	\label{tab:admm-hv-objective-nnz-gaussian}
	\begin{tabular}{rrrrccc}
		\toprule
		\multicolumn{3}{c}{Problem} & \multicolumn{4}{c}{Final objective value} \\
		\cmidrule(lr){1-3}\cmidrule(l){4-7}
		$N$ & $M$ & $s$ & ADMM & HV & RG ($\mu\pm\sigma$) & Max RG \\
		\midrule
		5000 & 500 & 50 & 2.692e-4 $\pm$ 7.3e-5 & 2.692e-4 $\pm$ 7.3e-5 & 1.59e-9 $\pm$ 4.03e-10 & 2.43e-9 \\
		6000 & 600 & 60 & 3.165e-4 $\pm$ 1.0e-4 & 3.165e-4 $\pm$ 1.0e-4 & 2.39e-9 $\pm$ 5.32e-10 & 3.62e-9 \\
		7000 & 700 & 70 & 4.777e-4 $\pm$ 1.7e-4 & 4.777e-4 $\pm$ 1.7e-4 & 3.91e-9 $\pm$ 1.44e-9 & 9.11e-9 \\
		8000 & 800 & 80 & 5.112e-4 $\pm$ 1.6e-4 & 5.113e-4 $\pm$ 1.6e-4 & 4.76e-9 $\pm$ 1.48e-9 & 7.84e-9 \\
		9000 & 900 & 90 & 7.099e-4 $\pm$ 1.9e-4 & 7.099e-4 $\pm$ 1.9e-4 & 7.11e-9 $\pm$ 1.61e-9 & 1.18e-8 \\
		10000 & 1000 & 100 & 9.156e-4 $\pm$ 2.6e-4 & 9.156e-4 $\pm$ 2.6e-4 & 1.01e-8 $\pm$ 2.05e-9 & 1.41e-8 \\
		\bottomrule
	\end{tabular}
	\endgroup
\end{table}

In the DCT experiments, the mean RG always stays within a small range between $2.75 \times 10^{-10}$ and $7.36 \times 10^{-10}$, and the maximum gap is only $1.67 \times 10^{-9}$. Even in the Gaussian cases, where the speedup is much larger, the mean RGs are still between $1.59 \times 10^{-9}$ and $1.01 \times 10^{-8}$, with the largest gap being $1.41 \times 10^{-8}$. Therefore, in the noiseless case, ADMM and HV give very close RWDSN objective values. This shows that the advantage of the proposed ADMM is purely computational: because it solves the proximal subproblem of $\mathcal{R}_{\eta}$ exactly, it needs fewer outer iterations, especially for Gaussian sensing matrices and large oversampled DCT cases.
  
\subsection{Noisy Compressed Sensing}
\label{subsec:noise-experiments}

Next, we evaluate the robustness and computational performance of the regularized WDSN model in the presence of measurement noise. The noisy data are generated according to $\bm{b} = \bm{A}\bm{x}^* + \bm{e}$. We consider three problem sizes: $(M,N,s)=(300,3000,30)$, $(400,4000,40)$, and $(500,5000,50)$, as well as three SNR levels: $30$, $40$, and $50$ dB. For each problem size and SNR level, the reported statistical results are averaged over 20 Monte Carlo trials. In each trial, both ADMM and HV utilize the same sensing matrix and noisy observations. This experimental setup ensures that the reported differences primarily reflect the variations in the algorithm update mechanisms, rather than the impact of other varying factors.

We test two sensing matrices. In the oversampled DCT matrices, the oversampling factor is $F=10$ and the support separation is set to $2F$. In the Gaussian matrices, the correlation parameter is $r=0.5$, and the support separation is set to 20. The noise magnitude is determined by the prescribed SNR. Specifically, for $\bm{g}\sim\mathcal{N}(\bm{0},\bm{I}_M)$, we set $\sigma=\frac{\|\bm{A}\bm{x}^*\|_2}{\sqrt{M}\,10^{\mathrm{SNR}/20}}$ and $\bm{e}=\sigma\sqrt{M}\frac{\bm{g}}{\|\bm{g}\|_2}$. This construction gives $\mathrm{SNR}=20\log_{10}\frac{\|\bm{A}\bm{x}^*\|_2}{\|\bm{e}\|_2}$. 

Both solvers start from the same noisy $\ell_1$ warm start. In all tests, we set $\eta=1$, use at most $5N$ iterations, and set both the absolute and relative stopping tolerances to $10^{-8}$. For ADMM, the penalty parameter is fixed at $\rho=1$. The WDSN regularization parameter is scaled with the noise level according to $\lambda=5\times10^{-2}\sigma\sqrt{2\log N}$, and the $\ell_1$ warm-start is set to $\lambda=\sigma\sqrt{2\log N}$. To assess recovery quality, we report the reconstruction SNR
$$
	\mathrm{RSNR}(\bm{\hat{x}},\bm{x}^*)
	:=
	-10\log_{10}
	\frac{\|\bm{\hat{x}}-\bm{x}^*\|_2^2}{\|\bm{x}^*\|_2^2},
$$
where larger values indicate more accurate reconstruction. We also report top-$K$ support metrics with $K=s$: topKhit is the number of true support indices contained in the $s$ largest-magnitude entries of $\bm{\hat{x}}$, topKrec is topKhit$/s$, and topKerr is $1-\mathrm{topKrec}$. Table~\ref{tab:noisy-dct-snr-metrics} reports the results for the oversampled DCT matrices.

\begin{table}[htbp]
	\centering
	\caption{Noisy recovery results for oversampled DCT sensing matrices at three SNR levels. The notation $\mu\pm\sigma$ reports the mean and sample standard deviation.}
	\label{tab:noisy-dct-snr-metrics}
	\begingroup
	\scriptsize
	\setlength{\tabcolsep}{2pt}
	\resizebox{\textwidth}{!}{%
	\begin{tabular}{cclccccccc}
		\toprule
		SNR & $(M,N,s)$ & Solver & Iter & Obj. & Time(s) & topKhit & topKrec & topKerr & RSNR(dB) \\
		\midrule
		30 & $(300,3000,30)$ & ADMM & \ensuremath{373.5\pm272.3} & \ensuremath{1.124\mathrm{e}{-1}\pm5.477\mathrm{e}{-2}} & \ensuremath{0.50\pm0.41} & \ensuremath{27.1\pm1.0} & \ensuremath{0.905\pm0.033} & \ensuremath{0.095\pm0.033} & \ensuremath{22.40\pm1.09} \\
		30 & $(300,3000,30)$ & HV & \ensuremath{11345.6\pm3122.5} & \ensuremath{1.124\mathrm{e}{-1}\pm5.477\mathrm{e}{-2}} & \ensuremath{10.19\pm2.86} & \ensuremath{27.1\pm1.0} & \ensuremath{0.905\pm0.033} & \ensuremath{0.095\pm0.033} & \ensuremath{22.39\pm1.10} \\
		30 & $(400,4000,40)$ & ADMM & \ensuremath{319.4\pm124.2} & \ensuremath{1.393\mathrm{e}{-1}\pm5.669\mathrm{e}{-2}} & \ensuremath{0.74\pm0.28} & \ensuremath{35.4\pm1.4} & \ensuremath{0.884\pm0.035} & \ensuremath{0.116\pm0.035} & \ensuremath{20.29\pm1.72} \\
		30 & $(400,4000,40)$ & HV & \ensuremath{11564.8\pm3630.0} & \ensuremath{1.393\mathrm{e}{-1}\pm5.669\mathrm{e}{-2}} & \ensuremath{16.83\pm5.31} & \ensuremath{35.4\pm1.4} & \ensuremath{0.884\pm0.035} & \ensuremath{0.116\pm0.035} & \ensuremath{20.28\pm1.74} \\
		30 & $(500,5000,50)$ & ADMM & \ensuremath{375.6\pm340.0} & \ensuremath{1.946\mathrm{e}{-1}\pm8.067\mathrm{e}{-2}} & \ensuremath{1.39\pm1.15} & \ensuremath{43.0\pm1.4} & \ensuremath{0.861\pm0.029} & \ensuremath{0.139\pm0.029} & \ensuremath{19.87\pm1.35} \\
		30 & $(500,5000,50)$ & HV & \ensuremath{12115.8\pm5573.7} & \ensuremath{1.946\mathrm{e}{-1}\pm8.067\mathrm{e}{-2}} & \ensuremath{27.47\pm13.27} & \ensuremath{43.0\pm1.4} & \ensuremath{0.861\pm0.029} & \ensuremath{0.139\pm0.029} & \ensuremath{19.88\pm1.35} \\
		40 & $(300,3000,30)$ & ADMM & \ensuremath{463.6\pm448.2} & \ensuremath{3.640\mathrm{e}{-2}\pm1.796\mathrm{e}{-2}} & \ensuremath{0.65\pm0.62} & \ensuremath{28.9\pm0.8} & \ensuremath{0.963\pm0.026} & \ensuremath{0.037\pm0.026} & \ensuremath{31.86\pm1.19} \\
		40 & $(300,3000,30)$ & HV & \ensuremath{9809.1\pm3714.0} & \ensuremath{3.640\mathrm{e}{-2}\pm1.796\mathrm{e}{-2}} & \ensuremath{8.65\pm3.32} & \ensuremath{28.9\pm0.8} & \ensuremath{0.963\pm0.026} & \ensuremath{0.037\pm0.026} & \ensuremath{31.85\pm1.19} \\
		40 & $(400,4000,40)$ & ADMM & \ensuremath{296.4\pm126.1} & \ensuremath{4.573\mathrm{e}{-2}\pm1.888\mathrm{e}{-2}} & \ensuremath{0.72\pm0.30} & \ensuremath{38.2\pm0.9} & \ensuremath{0.955\pm0.022} & \ensuremath{0.045\pm0.022} & \ensuremath{29.68\pm1.82} \\
		40 & $(400,4000,40)$ & HV & \ensuremath{11282.0\pm4982.7} & \ensuremath{4.573\mathrm{e}{-2}\pm1.888\mathrm{e}{-2}} & \ensuremath{16.31\pm7.06} & \ensuremath{38.2\pm0.9} & \ensuremath{0.955\pm0.022} & \ensuremath{0.045\pm0.022} & \ensuremath{29.69\pm1.82} \\
		40 & $(500,5000,50)$ & ADMM & \ensuremath{331.4\pm273.3} & \ensuremath{6.446\mathrm{e}{-2}\pm2.718\mathrm{e}{-2}} & \ensuremath{1.22\pm1.02} & \ensuremath{47.2\pm1.5} & \ensuremath{0.945\pm0.030} & \ensuremath{0.055\pm0.030} & \ensuremath{29.00\pm1.42} \\
		40 & $(500,5000,50)$ & HV & \ensuremath{9564.9\pm2799.2} & \ensuremath{6.446\mathrm{e}{-2}\pm2.718\mathrm{e}{-2}} & \ensuremath{21.71\pm6.69} & \ensuremath{47.2\pm1.5} & \ensuremath{0.945\pm0.030} & \ensuremath{0.055\pm0.030} & \ensuremath{29.00\pm1.42} \\
		50 & $(300,3000,30)$ & ADMM & \ensuremath{367.4\pm411.2} & \ensuremath{1.161\mathrm{e}{-2}\pm5.757\mathrm{e}{-3}} & \ensuremath{0.48\pm0.50} & \ensuremath{29.7\pm0.5} & \ensuremath{0.990\pm0.016} & \ensuremath{0.010\pm0.016} & \ensuremath{41.71\pm1.10} \\
		50 & $(300,3000,30)$ & HV & \ensuremath{8074.2\pm3387.8} & \ensuremath{1.161\mathrm{e}{-2}\pm5.757\mathrm{e}{-3}} & \ensuremath{7.24\pm2.93} & \ensuremath{29.7\pm0.5} & \ensuremath{0.990\pm0.016} & \ensuremath{0.010\pm0.016} & \ensuremath{41.66\pm1.18} \\
		50 & $(400,4000,40)$ & ADMM & \ensuremath{296.1\pm214.7} & \ensuremath{1.465\mathrm{e}{-2}\pm6.080\mathrm{e}{-3}} & \ensuremath{0.68\pm0.45} & \ensuremath{39.0\pm0.9} & \ensuremath{0.976\pm0.024} & \ensuremath{0.024\pm0.024} & \ensuremath{39.46\pm1.81} \\
		50 & $(400,4000,40)$ & HV & \ensuremath{9260.0\pm4536.4} & \ensuremath{1.465\mathrm{e}{-2}\pm6.080\mathrm{e}{-3}} & \ensuremath{13.33\pm6.48} & \ensuremath{39.0\pm0.9} & \ensuremath{0.976\pm0.024} & \ensuremath{0.024\pm0.024} & \ensuremath{39.46\pm1.81} \\
		50 & $(500,5000,50)$ & ADMM & \ensuremath{285.1\pm165.4} & \ensuremath{2.072\mathrm{e}{-2}\pm8.791\mathrm{e}{-3}} & \ensuremath{1.08\pm0.72} & \ensuremath{49.1\pm0.7} & \ensuremath{0.983\pm0.015} & \ensuremath{0.017\pm0.015} & \ensuremath{38.78\pm1.45} \\
		50 & $(500,5000,50)$ & HV & \ensuremath{8050.1\pm2916.2} & \ensuremath{2.072\mathrm{e}{-2}\pm8.791\mathrm{e}{-3}} & \ensuremath{18.52\pm6.50} & \ensuremath{49.1\pm0.7} & \ensuremath{0.983\pm0.015} & \ensuremath{0.017\pm0.015} & \ensuremath{38.78\pm1.44} \\
		\bottomrule
	\end{tabular}%  
	}
	\endgroup
\end{table}

Experimental results on the oversampled DCT matrix show that recovery quality depends on the noise level. As the SNR increases from 30 dB to 50 dB, topKhit improves from $0.86$--$0.91$ to $0.98$--$0.99$, and the RSNR increases by about 18--20 dB. At the same noise level, ADMM and HV yield almost identical objective function values, support metrics, and RSNR. The computational difference, however, remains clear. Across the nine DCT settings, ADMM reduces the average number of iterations by about $95.3\%$--$97.4\%$ and gives an average runtime speedup between $13.3$ and $22.7$, based on the reported mean CPU times. In every DCT setting, the average runtime of ADMM is lower than that of HV.

\begin{table}[htbp]
	\centering
	\caption{Noisy recovery results for Gaussian sensing matrices at three SNR levels. Values are computed over 20 paired Monte Carlo trials. The notation $\mu\pm\sigma$ reports the mean and sample standard deviation.}
	\label{tab:noisy-gaussian-snr-metrics}
	\begingroup
	\scriptsize
	\setlength{\tabcolsep}{2pt}
	\resizebox{\textwidth}{!}{%
	\begin{tabular}{cclccccccc}
		\toprule
		SNR & $(M,N,s)$ & Solver & Iter & Obj. & Time(s) & topKhit & topKrec & topKerr & RSNR(dB) \\
		\midrule
		30 & $(300,3000,30)$ & ADMM & \ensuremath{571.1\pm207.6} & \ensuremath{9.033\mathrm{e}{-3}\pm4.935\mathrm{e}{-3}} & \ensuremath{0.86\pm0.41} & \ensuremath{28.6\pm0.8} & \ensuremath{0.953\pm0.027} & \ensuremath{0.047\pm0.027} & \ensuremath{25.86\pm1.94} \\
		30 & $(300,3000,30)$ & HV & \ensuremath{15000.0\pm0.0} & \ensuremath{9.041\mathrm{e}{-3}\pm4.940\mathrm{e}{-3}} & \ensuremath{14.06\pm5.98} & \ensuremath{28.4\pm0.9} & \ensuremath{0.948\pm0.030} & \ensuremath{0.052\pm0.030} & \ensuremath{25.39\pm2.00} \\
		30 & $(400,4000,40)$ & ADMM & \ensuremath{391.4\pm117.1} & \ensuremath{1.809\mathrm{e}{-2}\pm1.409\mathrm{e}{-2}} & \ensuremath{1.26\pm0.39} & \ensuremath{37.9\pm1.6} & \ensuremath{0.948\pm0.039} & \ensuremath{0.052\pm0.039} & \ensuremath{25.94\pm2.81} \\
		30 & $(400,4000,40)$ & HV & \ensuremath{20000.0\pm0.0} & \ensuremath{1.810\mathrm{e}{-2}\pm1.410\mathrm{e}{-2}} & \ensuremath{39.31\pm8.95} & \ensuremath{37.8\pm1.7} & \ensuremath{0.945\pm0.042} & \ensuremath{0.055\pm0.042} & \ensuremath{25.58\pm2.83} \\
		30 & $(500,5000,50)$ & ADMM & \ensuremath{263.3\pm56.9} & \ensuremath{2.310\mathrm{e}{-2}\pm9.040\mathrm{e}{-3}} & \ensuremath{0.96\pm0.20} & \ensuremath{47.0\pm1.7} & \ensuremath{0.940\pm0.034} & \ensuremath{0.060\pm0.034} & \ensuremath{26.41\pm1.82} \\
		30 & $(500,5000,50)$ & HV & \ensuremath{25000.0\pm0.0} & \ensuremath{2.311\mathrm{e}{-2}\pm9.045\mathrm{e}{-3}} & \ensuremath{55.97\pm0.91} & \ensuremath{47.0\pm1.7} & \ensuremath{0.940\pm0.034} & \ensuremath{0.060\pm0.034} & \ensuremath{26.15\pm1.84} \\
		40 & $(300,3000,30)$ & ADMM & \ensuremath{467.4\pm171.9} & \ensuremath{2.684\mathrm{e}{-3}\pm1.432\mathrm{e}{-3}} & \ensuremath{0.65\pm0.24} & \ensuremath{29.3\pm0.7} & \ensuremath{0.977\pm0.024} & \ensuremath{0.023\pm0.024} & \ensuremath{35.64\pm1.88} \\
		40 & $(300,3000,30)$ & HV & \ensuremath{15000.0\pm0.0} & \ensuremath{2.684\mathrm{e}{-3}\pm1.433\mathrm{e}{-3}} & \ensuremath{13.68\pm5.87} & \ensuremath{29.2\pm0.8} & \ensuremath{0.975\pm0.026} & \ensuremath{0.025\pm0.026} & \ensuremath{35.14\pm1.91} \\
		40 & $(400,4000,40)$ & ADMM & \ensuremath{302.4\pm75.1} & \ensuremath{5.420\mathrm{e}{-3}\pm4.196\mathrm{e}{-3}} & \ensuremath{1.06\pm0.62} & \ensuremath{39.3\pm0.9} & \ensuremath{0.982\pm0.023} & \ensuremath{0.018\pm0.023} & \ensuremath{35.84\pm2.81} \\
		40 & $(400,4000,40)$ & HV & \ensuremath{20000.0\pm0.0} & \ensuremath{5.421\mathrm{e}{-3}\pm4.198\mathrm{e}{-3}} & \ensuremath{39.59\pm10.54} & \ensuremath{39.4\pm0.9} & \ensuremath{0.984\pm0.023} & \ensuremath{0.016\pm0.023} & \ensuremath{35.49\pm2.83} \\
		40 & $(500,5000,50)$ & ADMM & \ensuremath{207.2\pm42.6} & \ensuremath{6.962\mathrm{e}{-3}\pm2.747\mathrm{e}{-3}} & \ensuremath{0.78\pm0.18} & \ensuremath{48.7\pm1.0} & \ensuremath{0.974\pm0.020} & \ensuremath{0.026\pm0.020} & \ensuremath{36.23\pm1.92} \\
		40 & $(500,5000,50)$ & HV & \ensuremath{25000.0\pm0.0} & \ensuremath{6.964\mathrm{e}{-3}\pm2.747\mathrm{e}{-3}} & \ensuremath{56.00\pm1.18} & \ensuremath{48.7\pm1.0} & \ensuremath{0.974\pm0.020} & \ensuremath{0.026\pm0.020} & \ensuremath{35.96\pm1.96} \\
		50 & $(300,3000,30)$ & ADMM & \ensuremath{335.4\pm103.8} & \ensuremath{8.317\mathrm{e}{-4}\pm4.406\mathrm{e}{-4}} & \ensuremath{0.47\pm0.17} & \ensuremath{29.7\pm0.5} & \ensuremath{0.990\pm0.016} & \ensuremath{0.010\pm0.016} & \ensuremath{45.57\pm1.88} \\
		50 & $(300,3000,30)$ & HV & \ensuremath{14986.0\pm62.4} & \ensuremath{8.318\mathrm{e}{-4}\pm4.406\mathrm{e}{-4}} & \ensuremath{14.93\pm5.59} & \ensuremath{29.6\pm0.5} & \ensuremath{0.988\pm0.016} & \ensuremath{0.012\pm0.016} & \ensuremath{45.05\pm1.90} \\
		50 & $(400,4000,40)$ & ADMM & \ensuremath{227.6\pm58.4} & \ensuremath{1.684\mathrm{e}{-3}\pm1.301\mathrm{e}{-3}} & \ensuremath{0.76\pm0.28} & \ensuremath{39.9\pm0.4} & \ensuremath{0.996\pm0.009} & \ensuremath{0.004\pm0.009} & \ensuremath{45.79\pm2.84} \\
		50 & $(400,4000,40)$ & HV & \ensuremath{19675.8\pm1190.0} & \ensuremath{1.685\mathrm{e}{-3}\pm1.301\mathrm{e}{-3}} & \ensuremath{39.43\pm9.43} & \ensuremath{39.8\pm0.4} & \ensuremath{0.995\pm0.010} & \ensuremath{0.005\pm0.010} & \ensuremath{45.43\pm2.87} \\
		50 & $(500,5000,50)$ & ADMM & \ensuremath{163.4\pm28.3} & \ensuremath{2.168\mathrm{e}{-3}\pm8.577\mathrm{e}{-4}} & \ensuremath{0.62\pm0.13} & \ensuremath{49.6\pm0.5} & \ensuremath{0.992\pm0.010} & \ensuremath{0.008\pm0.010} & \ensuremath{46.10\pm1.95} \\
		50 & $(500,5000,50)$ & HV & \ensuremath{24827.7\pm770.8} & \ensuremath{2.168\mathrm{e}{-3}\pm8.577\mathrm{e}{-4}} & \ensuremath{55.80\pm2.29} & \ensuremath{49.6\pm0.5} & \ensuremath{0.992\pm0.010} & \ensuremath{0.008\pm0.010} & \ensuremath{45.80\pm2.00} \\
		\bottomrule
	\end{tabular}%
	}
	\endgroup
\end{table}

The Gaussian results are reported in Table~\ref{tab:noisy-gaussian-snr-metrics}. These tests show an even clearer difference in computational efficiency. In all settings, HV reaches, or nearly reaches, the prescribed maximum number of iterations, whereas ADMM stops after an average of $163.4$--$571.1$ iterations. Based on the reported averages, ADMM reduces the iteration count by about $96.2\%$--$99.3\%$ and gives an average runtime speedup between $16.3$ and $90.0$. The recovery performance remains comparable. The two solvers give almost the same objective values and similar top-$K$ recall. In these Gaussian tests, the average RSNR of ADMM is also slightly higher, by about $0.26$--$0.52$ dB. This difference, however, should be viewed as a minor effect, since the two solvers use the same regularization parameter and the same initialization. The main point is still the large gain in computational efficiency.

Overall, the noisy experiments show that our ADMM algorithm substantially improves computational efficiency while maintaining the same recovery quality as the HV solver. This indicates that the exact treatment of the WDSN proximal operator in ADMM provides a clear algorithmic advantage without degrading the objective function value, support recovery accuracy, or reconstruction accuracy.

\subsection{Amplitude Rescaling Experiment}
\label{subsec:amplitude-rescaling}

We next test the effect of amplitude rescaling when the regularization parameter is selected at one signal amplitude and then kept fixed. The WDSN functional is two-homogeneous, namely $\mathcal{R}_{\eta}(c\bm{x})=c^2\mathcal{R}_{\eta}(\bm{x})$ with $c>0$, which has the same homogeneity as the quadratic data-fidelity term $\frac12\|\bm{A}(c\bm{x})-c\bm{b}\|_2^2=c^2\frac12\|\bm{A}\bm{x}-\bm{b}\|_2^2$. Thus the relative balance between the data-fidelity term and the WDSN penalty is preserved when the data amplitude is rescaled. In contrast, the $\ell_1-\alpha\ell_2$ penalty is one-homogeneous, and therefore keeping the same $\lambda$ after amplitude rescaling changes its effective regularization strength.

We generate Gaussian sensing matrices with $(M,N,s)=(40,100,8)$ and SNR $=50$ dB. For each Monte Carlo trial, the WDSN parameter $\lambda_{\mathrm{WDSN}}$ and the $\ell_1-\alpha\ell_2$ parameter $\lambda_{\ell_1-\alpha\ell_2}$ are selected separately on the original problem with $c=1$. We then solve the rescaled problems $\bm{b}_c=c\bm{b}$ with $c\in\{10^{-3},10^{-2},10^{-1},1,10,10^2,10^3\}$, while each method keeps its own value of $\lambda$ selected at $c=1$. Both methods use the same $\ell_1$ warm start for each scaled problem. We set $\eta=\alpha=1$, use $\rho=1$, and average the results over 100 Monte Carlo trials. The metrics reported in Table~\ref{tab:scale-transfer} are defined as follows: $\mathrm{RelErr}(c):=\|\widehat{\bm{x}}(c\bm{b})-\bm{x}_c^*\|_2/\|\bm{x}_c^*\|_2$, the relative reconstruction error with respect to the rescaled ground truth $\bm{x}_c^*=c\bm{x}^*$; $\mathrm{ScaleErr}(c):=\|\widehat{\bm{x}}(c\bm{b})-c\widehat{\bm{x}}(\bm{b})\|_2/\|c\widehat{\bm{x}}(\bm{b})\|_2$, the deviation from exact output scaling; $\widehat{k}$, the number of recovered entries whose magnitudes exceed $10^{-2}\|\widehat{\bm{x}}(c\bm{b})\|_{\infty}$; FP, the number of such recovered entries outside the true support; Conv., the proportion of trials satisfying the ADMM convergence criterion.

\begin{table}[t]
	\caption{Amplitude rescaling experiment results. Values are reported as mean $\pm$ sample standard deviation.}
	\label{tab:scale-transfer}
	\centering
	\scriptsize
	\resizebox{\textwidth}{!}{%
	\begin{tabular}{ccccccccccc}
		\toprule
		& \multicolumn{2}{c}{RelErr} & \multicolumn{2}{c}{ScaleErr} & \multicolumn{2}{c}{$\widehat{k}$} & \multicolumn{2}{c}{FP} & \multicolumn{2}{c}{Conv.} \\
		\cmidrule(lr){2-3}\cmidrule(lr){4-5}\cmidrule(lr){6-7}\cmidrule(lr){8-9}\cmidrule(lr){10-11}
		$c$ & WDSN & $\ell_1-\ell_2$ & WDSN & $\ell_1-\ell_2$ & WDSN & $\ell_1-\ell_2$ & WDSN & $\ell_1-\ell_2$ & WDSN & $\ell_1-\ell_2$ \\
		\midrule
		$10^{-3}$ & $5.619{\times}10^{-3}\pm2.155{\times}10^{-3}$ & $9.145{\times}10^{-1}\pm1.298{\times}10^{-1}$ & $2.508{\times}10^{-3}\pm1.200{\times}10^{-3}$ & $9.143{\times}10^{-1}\pm1.302{\times}10^{-1}$ & $8.00\pm0.00$ & $1.82\pm1.90$ & $0.00\pm0.00$ & $0.24\pm0.73$ & $1.00$ & $0.74$ \\
		$10^{-2}$ & $4.878{\times}10^{-3}\pm1.984{\times}10^{-3}$ & $1.380{\times}10^{-1}\pm5.205{\times}10^{-2}$ & $1.345{\times}10^{-3}\pm1.001{\times}10^{-3}$ & $1.362{\times}10^{-1}\pm5.210{\times}10^{-2}$ & $8.00\pm0.00$ & $8.08\pm0.31$ & $0.00\pm0.00$ & $0.08\pm0.31$ & $1.00$ & $1.00$ \\
		$10^{-1}$ & $4.233{\times}10^{-3}\pm1.424{\times}10^{-3}$ & $1.353{\times}10^{-2}\pm4.399{\times}10^{-3}$ & $1.910{\times}10^{-4}\pm1.795{\times}10^{-4}$ & $1.149{\times}10^{-2}\pm4.231{\times}10^{-3}$ & $8.00\pm0.00$ & $8.00\pm0.00$ & $0.00\pm0.00$ & $0.00\pm0.00$ & $1.00$ & $1.00$ \\
		$1$ & $4.176{\times}10^{-3}\pm1.383{\times}10^{-3}$ & $2.991{\times}10^{-3}\pm6.644{\times}10^{-4}$ & $0\pm0$ & $0\pm0$ & $8.00\pm0.00$ & $8.00\pm0.00$ & $0.00\pm0.00$ & $0.00\pm0.00$ & $1.00$ & $1.00$ \\
		$10$ & $4.171{\times}10^{-3}\pm1.382{\times}10^{-3}$ & $4.185{\times}10^{-3}\pm1.264{\times}10^{-3}$ & $2.087{\times}10^{-5}\pm1.771{\times}10^{-5}$ & $2.675{\times}10^{-3}\pm9.579{\times}10^{-4}$ & $8.00\pm0.00$ & $8.00\pm0.00$ & $0.00\pm0.00$ & $0.00\pm0.00$ & $1.00$ & $0.69$ \\
		$10^2$ & $4.171{\times}10^{-3}\pm1.382{\times}10^{-3}$ & $5.261{\times}10^{-3}\pm1.752{\times}10^{-3}$ & $2.239{\times}10^{-5}\pm1.957{\times}10^{-5}$ & $3.645{\times}10^{-3}\pm1.326{\times}10^{-3}$ & $8.00\pm0.00$ & $8.00\pm0.00$ & $0.00\pm0.00$ & $0.00\pm0.00$ & $1.00$ & $0.00$ \\
		$10^3$ & $4.171{\times}10^{-3}\pm1.382{\times}10^{-3}$ & $5.874{\times}10^{-3}\pm1.920{\times}10^{-3}$ & $2.252{\times}10^{-5}\pm1.977{\times}10^{-5}$ & $4.184{\times}10^{-3}\pm1.490{\times}10^{-3}$ & $8.00\pm0.00$ & $8.00\pm0.00$ & $0.00\pm0.00$ & $0.00\pm0.00$ & $1.00$ & $0.00$ \\
		\bottomrule
	\end{tabular}
	}
\end{table}

The results in Table~\ref{tab:scale-transfer} show the expected scaling behavior of WDSN. With the same value of $\lambda$ selected at $c=1$, WDSN keeps the relative reconstruction error in the narrow range from $4.171\times10^{-3}$ to $5.619\times10^{-3}$ over six orders of magnitude in amplitude, and its scale error remains below $2.51\times10^{-3}$. It also recovers $\widehat{k}=8$ with zero false positives and converges in all tested trials. For the $\ell_1-\ell_2$ model, its relative error increases to $9.15\times10^{-1}$ at $c=10^{-3}$, where it recovers only $1.82$ entries on average. Its scale error remains much larger than that of WDSN away from the tuning scale, and the proportion of converged runs drops to $0$ at both $c=10^2$ and $c=10^3$. These results show that WDSN can keep the same $\lambda$ across data amplitudes, whereas the one-homogeneous $\ell_1-\ell_2$ penalty is sensitive to amplitude rescaling.

\section{Conclusion}\label{conclusion}

This paper developed a recovery theory and an algorithmic framework for sparse recovery with the WDSN penalty
$\mathcal{R}_{\eta}(\bm{x})=\|\bm{x}\|_1^2-\eta\|\bm{x}\|_2^2$.
On the theoretical side, we introduced an $\eta$-augmented NSP and used it to obtain a sufficient condition for uniform exact recovery in the noiseless model. For noisy measurements, we established RIP-based stable recovery guarantees for both exactly $k$-sparse signals and general signals. In the general case, the error bound contains both the measurement noise and the best $k$-term approximation error, and it reduces to the exactly $k$-sparse case when the approximation error vanishes.

On the algorithmic side, we derived a closed-form proximal mapping of $\mathcal{R}_{\eta}$ in the three regimes determined by $1-2t\eta$. Embedding this exact WDSN proximal step into a ADMM scheme gives an efficient solver for the regularized WDSN model. Under coercivity and a sufficiently large penalty parameter, we proved subsequential convergence of the ADMM algorithm. Under the Kurdyka--\L{}ojasiewicz property, we further established convergence of the whole sequence to a stationary point of the split problem.

The numerical experiments show the computational benefit of the exact WDSN proximal step. With the same initialization, parameter settings, and stopping criteria, the proposed ADMM method reaches final objective values and support-recovery accuracy comparable to those of the HV method, while using far fewer iterations and less CPU time. The improvement is clearer for Gaussian sensing matrices and in the noisy tests. The amplitude-rescaling experiment further shows that WDSN can keep the same regularization parameter across a wide range of signal amplitudes, whereas the one-homogeneous $\ell_1-\ell_2$ penalty is more sensitive to this rescaling. Future work includes improving the recovery constants, designing adaptive rules for choosing $\lambda$, $\eta$, and $\rho$, and comparing WDSN with other sparse recovery penalties.

\begin{appendices}
%\section{Implementation Note}
%The MATLAB implementation of Algorithm~\ref{alg:prox_l1sq_l2sq_compact} sorts $|v|$ once in the case $c>0$ and is therefore dominated by $O(n\log n)$ arithmetic. The cases $c\le0$ require only finding the maximum entry of $|v|$ and are $O(n)$. These formulas are used in the WDSN proximal steps tested in Section \ref{sec-num}.
\end{appendices}
\bibliographystyle{elsarticle-num}
\bibliography{thesis}
\end{document}